\providecommand{\U}[1]{\protect\rule{.1in}{.1in}}
\providecommand{\U}[1]{\protect \rule{.1in}{.1in}}
\providecommand{\U}[1]{\protect \rule{.1in}{.1in}}
\newtheorem{theorem}{Theorem}[section]
\newtheorem{lemma}{Lemma}[section]
\newtheorem{proposition}{Proposition}[section]
\newtheorem{corollary}{Corollary}[section]
\newtheorem{definition}{Definition}[section]
\numberwithin{equation}{section}
\newtheorem {conjecture}{Conjecture}
\theoremstyle{remark}
\newtheorem{remark}{Remark}[section]
\numberwithin{equation}{section}
\begin{document}
\title[Sasakian Structure of Manifolds with Nonnegative Transverse Bisectional Curvature]{On the Sasakian Structure of Manifolds with Nonnegative Transverse Bisectional Curvature}
\author{$^{\ast}$Shu-Cheng Chang}
\address{Shanghai Institute of Mathematics and Interdisciplinary Sciences, Shanghai
(SIMIS), 200433, China}
\email{scchang@math.ntu.edu.tw }
\author{$^{\dag}$Yingbo Han}
\address{{School of Mathematics and Statistics, Xinyang Normal University}\\
Xinyang,464000, Henan, China}
\email{{yingbohan@163.com}}
\author{$^{\ast\ast}$Chien Lin}
\address{Department of Mathematics, National Taiwan Normal University, Taipei, Taiwan}
\email{clin@math.ntnu.edu.tw}
\author{$^{\ddag}$Chin-Tung Wu}
\address{$^{^{\ddag}}$Department of Applied Mathematics, National Pingtung University,
Pingtung 90003, Taiwan}
\email{ctwu@mail.nptu.edu.tw }
\thanks{$^{\ast}$Shu-Cheng Chang is partially supported in part by Startup Foundation
for Advanced Talents of the Shanghai Institute for Mathematics and
Interdisciplinary Sciences (No.2302-SRFP-2024-0049). $^{\dag}$Yingbo Han is
partially supported by an NSFC grant No. 12571056 and NSF of Henan Province
No. 252300421497. $^{\ast\ast}$ Chien Lin is partially supported by NSTC
114-2115-M-003-002-MY2. $^{\ddag}$ Chin-Tung Wu is partially supported by NSTC
grant 114-2115-M-153-001, Taiwan. }
\subjclass{Primary 32V05, 32V20; Secondary 53C56.}
\keywords{Sasaki-Ricci flow, CR uniformization conjecture, Sasaki-Chern-Ricci flow,
Maximal volume growth, Li-Yau-Hamilton harnack inequality, Heisenberg group.}

\begin{abstract}
In this paper, we concern with the Sasaki analogue of Yau uniformization conjecture
in a complete noncompact Sasakian manifold with nonnegative transverse
bisectional curvature. As a consequence, we confirm that any $5$-dimensional
complete noncompact Sasakian manifold with positive transverse bisectional
curvature and the maximal volume growth must be CR-biholomorphic to the
standard Heisenberg group $\mathbb{H}_{2}$ which can be stated as the standard
contact Euclidean $5$-space $\mathbb{R}^{5}$.

\end{abstract}
\maketitle
\tableofcontents

\section{Introduction}

A Sasakian manifold is an odd dimensional counterpart of K\"{a}hler geometry
which is also a strictly pseudoconvex CR $(2n+1)$-manifold of vanishing
pseudohermitian torsion. It is known He-Sun (\cite{hs}) that a simply
connected closed Sasakian manifold with positive transverse bisectional
curvature is CR equivalent to the round CR sphere $\mathbb{S}^{2n+1}$. Then it
is very natural to concern with an CR analogue of Yau uniformization
conjectures in a complete noncompact Sasakian $(2n+1)$-manifold with positive
transverse bisectional curvature.

We recall that the famous Yau's uniformization conjecture on a complete
noncompact K\"{a}hler manifold states that

\begin{conjecture}
\label{conj} If $M$ is a complete noncompact $m$-dimensional K\"{a}hler
manifold with positive holomorphic bisectional curvature, then $M$ is
biholomorphic to the standard $m$-dimensional complex Euclidean space
$\mathbb{C}^{m}$.
\end{conjecture}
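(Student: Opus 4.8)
The plan is to realize $M$ as a smooth affine algebraic manifold by means of holomorphic functions of polynomial growth, and then to recognize that manifold as $\mathbb{C}^m$. First note that positive holomorphic bisectional curvature forces positive Ricci curvature, and the structure theory for such Kähler manifolds gives that $M$ is Stein and carries a strictly plurisubharmonic exhaustion; in particular there is no topological obstruction, and the whole problem reduces to producing enough holomorphic functions on $M$ and controlling their growth sharply. The target is to build a ring $\mathcal{O}_P(M)$ of polynomial-growth holomorphic functions that separates points and tangents and is finitely generated, so that a generating set defines a proper embedding $F=(f_1,\dots,f_N):M\to\mathbb{C}^N$.

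To manufacture these functions I would run the Kähler--Ricci flow $\partial_t g_{i\bar j}=-R_{i\bar j}$ with initial metric $g$. Shi's local derivative estimates give short-time existence in the complete noncompact setting, and the noncompact analogue of the Bando--Mok maximum principle preserves positivity of the holomorphic bisectional curvature, so the curvature hypothesis survives along the flow. The decisive analytic input is the Li--Yau--Hamilton (Harnack) inequality for the flow: it supplies the monotonicity behind the three-circle and dimension estimates, and through the evolution of $\log\det g_{i\bar j}$ it lets one solve the Poincaré--Lelong equation $\sqrt{-1}\,\partial\bar\partial u=\mathrm{Ric}$ with controlled growth. From the plurisubharmonic solution $u$ one extracts holomorphic functions of polynomial growth, and the Harnack gradient bound converts growth of $|f|$ into growth of $|\nabla f|$, which is exactly what forces these functions to behave like polynomials.

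Granting this function theory, the embedding step is comparatively formal. One shows that $F$ is a proper holomorphic embedding onto a smooth affine variety $V\subset\mathbb{C}^N$, using separation of points and tangents, and then identifies $V$ with $\mathbb{C}^m$ by a dimension-and-degree count: $V$ is smooth, contractible, of complex dimension $m$, and of the expected asymptotic degree, which pins it down as a linear $\mathbb{C}^m$. This yields the desired biholomorphism $M\cong\mathbb{C}^m$.

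The main obstacle is precisely the step that forces every known theorem of this type to assume maximal volume growth. Without a lower bound $\mathrm{Vol}(B_r)\ge c\,r^{2m}$, one loses the sharp estimate $\dim\mathcal{O}_d(M)\le\dim\mathcal{O}_d(\mathbb{C}^m)$ and, with it, the Gromov--Hausdorff tangent-cone analysis that guarantees $\mathcal{O}_P(M)$ contains the correct number of independent functions, that $F$ is surjective, and that the degree of $V$ is $1$. Ruling out collapsing of the rescaled flow and showing that the polynomial-growth holomorphic functions still separate points in the complete absence of a volume-growth assumption is the genuinely open heart of the conjecture; I expect that closing this gap, rather than any step in the construction above, is where a new idea is needed, and it is the reason the statement as worded remains unresolved.
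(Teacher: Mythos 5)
You have not proved the statement, and you essentially say so yourself in your last paragraph: this is Conjecture \ref{conj}, Yau's uniformization conjecture, which the paper states as an \emph{open} problem and does not prove. The paper only records partial results (Siu--Yau, Mok, Chen--Zhu, Ni, Chau--Tam, and the resolutions of Liu and Lee--Tam under \emph{maximal volume growth}), and then develops the Sasakian analogue --- again only under a maximal volume growth hypothesis (Theorems \ref{T1}, \ref{T2A}, \ref{T2B}). So there is no ``paper's own proof'' to match your argument against; the correct assessment is whether your proposal closes the conjecture, and it does not.

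The gaps are concrete and occur earlier than you locate them. First, your opening claim that positive holomorphic bisectional curvature makes $M$ Stein with a strictly plurisubharmonic exhaustion is itself not a known theorem in this generality (Greene--Wu requires positive \emph{sectional} curvature); you cannot treat the topological and complex-analytic structure as settled input. Second, Shi's short-time existence for the K\"ahler--Ricci flow requires \emph{bounded} curvature, which positive bisectional curvature does not supply; the bounded-curvature hypothesis is exactly what Lee--Tam's Chern--Ricci flow techniques (used in this paper's Appendix, Proposition \ref{P1}) were designed to remove, and even there one needs a volume noncollapsing condition $V_{\xi}(x,1)\geq v_{0}$. Third, and most fundamentally, the function theory your embedding step relies on is \emph{equivalent} to the hypothesis you are trying to avoid: by Ni's result (quoted in this paper's introduction in its CR form), under nonnegative bisectional curvature one has $\mathcal{O}_{P}(M)\neq\mathbb{C}$ if and only if $M$ has maximal volume growth. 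Without that assumption there may be no nonconstant polynomial-growth holomorphic function at all, so the ring $\mathcal{O}_P(M)$ you propose to generate, the Poincar\'e--Lelong solution with controlled growth (which needs Ni--Tam's average curvature decay, the analogue of the hypothesis in Theorems \ref{T1A} and \ref{T2A}), the three-circle dimension estimate, and the tangent-cone analysis all have no starting point. Your proposal therefore reduces the conjecture to itself: every step is the standard pipeline from the volume-growth case, and the one step that would constitute progress --- producing even a single nonconstant function of polynomial growth, or a noncollapsing statement, from positivity of curvature alone --- is asserted as the ``main obstacle'' rather than addressed. As a survey of why the conjecture is hard your note is accurate; as a proof it is circular at the decisive point.
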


There are many illustrations of this conjecture, among others, the
breakthrough progress relating to this conjecture could be attributed to
Siu-Yau (\cite{sy}) and Mok (\cite{m}). More precisely, a complete noncompact
$m$-dimensional K\"{a}hler manifold $M$ with nonnegative holomorphic
bisectional curvature is isometrically biholomorphic to $\mathbb{C}^{m}$ under
the assumptions of the maximum volume growth condition and the scalar
curvature decays in certain rate. Since then there are several further works
aiming to prove the optimal result, such as in Chen-Zhu (\cite{cz}), Ni
(\cite{n}) and Chau-Tam (\cite{ct1}). They proved that a complete noncompact
K\"{a}hler manifold of bounded nonnegative holomorphic bisectional curvature
and maximal volume growth is biholomorphic to $\mathbb{C}^{m}$. Moreover, Liu
(\cite{l2}) and Lee-Tam (\cite{lt2}) confirmed Yau's uniformization conjecture
when $M$ has the maximal volume growth property via the elliptic and parabolic
methods, respectively.

For the Sasaki setting, one can regard the standard Heisenberg group
$\mathbb{H}_{n}=\mathbb{C}^{n}\times\mathbb{R}$ of dimension $2n+1$ as the
standard contact Euclidean space $(\mathbb{R}^{2n+1},\eta_{\mathrm{can}}%
,\Phi,\xi,g_{\mathrm{can}})$ with coordinates $(x_{1},...,x_{n},y_{1}%
,...,y_{n},s)$ and taking $\eta_{\mathrm{can}}$ as the canonical contact
structure and $g$ as the canonical Sasaki metric as in (\ref{2026A}) and
(\ref{2026B}), respectively. Then as in our previous papers \cite{chl1} and
\cite{chll}, one can propose the following conjecture.

\begin{conjecture}
\label{conj5} If $M$ is a complete noncompact Sasakian $(2n+1)$-manifold of
positive transverse holomorphic bisectional curvature. Then $M$ is CR
biholomorphic to the standard Heisenberg group%
\[
\mathbb{H}_{n}=\mathbb{C}^{n}\times\mathbb{R}.
\]

\end{conjecture}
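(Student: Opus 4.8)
The plan is to adapt the parabolic approach to Yau's uniformization conjecture---developed in the K\"{a}hler category by Liu and by Lee--Tam---to the transverse K\"{a}hler geometry of a Sasakian manifold, running the Sasaki--Ricci flow on $M$ and extracting CR-holomorphic functions of polynomial growth that furnish a CR embedding onto $\mathbb{H}_{n}$. First I would establish short-time existence of the (unnormalized) transverse flow $\p_{t} g^{T}_{i\bar{j}} = -R^{T}_{i\bar{j}}$ on the complete noncompact $M$, together with the transverse analogue of Shi's local derivative estimates, all invariant under the Reeb field $\xi$ so that the solution remains Sasakian. The crucial structural input is that positive transverse bisectional curvature is preserved along the flow: since the evolution of the transverse curvature operator along a foliated K\"{a}hler--Ricci flow is formally identical to the K\"{a}hler case on the local leaf space, Hamilton's tensor maximum principle applies to the transverse curvature and keeps $R^{T}_{i\bar{j}k\bar{l}} > 0$ for all time.

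Next I would derive the Li--Yau--Hamilton (Harnack) inequality for the Sasaki--Ricci flow under nonnegative transverse bisectional curvature---the transverse counterpart of Cao's matrix Harnack estimate---and combine it with transverse Laplacian comparison and heat-kernel bounds to obtain long-time existence and a controlled, possibly blown-down, limiting transverse K\"{a}hler structure. From the positivity and the Harnack estimate I would then produce, via a transverse three-circle / monotonicity theorem (the CR analogue of the Liu--Ni sharp dimension estimate), a large supply of transversely holomorphic, hence CR-holomorphic, functions of polynomial growth on $M$; these are the candidate coordinate functions for the embedding.

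The decisive and hardest step is to remove any volume hypothesis by proving that positive transverse bisectional curvature \emph{by itself} forces Euclidean-type volume growth of the Sasaki metric $g$ (equivalently, of the transverse volume). In the K\"{a}hler case this is precisely where Liu's argument invokes the Cheeger--Colding structure theory of Gromov--Hausdorff limits to rule out volume collapse, and the principal obstacle here is that no such structure theory is yet available for the foliated, sub-Riemannian geometry of a Sasakian manifold: one must control the behaviour of the Reeb foliation under rescaling and prevent collapse along the $\xi$-direction while tracking the transverse metric cone. I expect this to be the genuine analytic bottleneck of the entire program.

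Finally, granted maximal volume growth, the algebra of polynomial-growth CR-holomorphic functions is finitely generated, and its generators $(w_{1},\dots,w_{N})$ together with the contact form assemble into a proper CR embedding $M \hookrightarrow \mathbb{C}^{N} \times \mathbb{R}$. A H\"{o}rmander-type $L^{2}$ estimate for $\overline{\p}_{b}$ on the transverse K\"{a}hler cone, a dimension count against the $(2n+1)$-dimensional contact structure, and identification of the induced contact form with $\eta_{\mathrm{can}}$ then force the image to exhaust the standard Heisenberg group, yielding the desired CR biholomorphism $M \cong \mathbb{H}_{n} = \mathbb{C}^{n} \times \mathbb{R}$.
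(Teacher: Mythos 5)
You should first be aware that the statement you were asked to prove is labelled a \emph{conjecture} in the paper, and the paper does not prove it: what the paper actually establishes (Theorem \ref{T1}, via Theorems \ref{T2A} and \ref{T2B}) is the conjecture \emph{under the additional hypothesis of maximal volume growth}, and even then the full Heisenberg conclusion only for $n=2$; for general $n$ the flow argument yields only that $M$ is CR-biholomorphic to $\Omega\times\mathbb{R}$ with $\Omega$ a pseudoconvex domain in $\mathbb{C}^{n}$. Measured against that, your outline of the flow machinery is faithful to the paper's program --- short-time and long-time existence of the Sasaki--Ricci flow with Shi-type estimates $\|\nabla^{m}Rm^{T}\|^{2}\leq C t^{-(1+m)}$, preservation of nonnegative transverse bisectional curvature by Hamilton's tensor maximum principle adapted to basic tensors, the transverse Cao--Hamilton matrix Harnack inequality, and the production of polynomial-growth CR-holomorphic functions via three-circle theorems and H\"{o}rmander $L^{2}$ estimates are all present in Sections 3--5 and in Proposition \ref{P1A}. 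But there are two genuine gaps between your proposal and a proof of the conjecture, and you treat them asymmetrically.

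The first gap --- deducing maximal volume growth from positivity of the transverse bisectional curvature alone --- you flag honestly as the analytic bottleneck, and indeed the paper does not close it either: Proposition \ref{P1A} (the existence of a nonconstant CR-holomorphic function of polynomial growth), Corollary \ref{C31} (preservation of the volume lower bound), and the injectivity radius estimate all \emph{assume} the growth condition $\mathrm{Vol}(B_{\xi}(p,r))\geq C_{1}r^{2n}$ as a hypothesis. The second gap, however, you pass over as if it were routine, and it is not: your final step claims that an $L^{2}$ estimate plus ``a dimension count'' forces the image of the embedding to exhaust $\mathbb{H}_{n}$. Even granting maximal volume growth, the paper's gluing argument (following Shi and Chen--Zhu) produces only a CR-biholomorphism onto $\Omega\times\mathbb{R}$ for a pseudoconvex $\Omega\subset\mathbb{C}^{n}$, and upgrading $\Omega$ to all of $\mathbb{C}^{n}$ is precisely the hard core of the uniformization problem --- in the K\"{a}hler model it is where Liu's Cheeger--Colding/tangent-cone analysis and Lee--Tam's work enter, not a soft counting argument. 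The paper resolves it only for $n=2$, by an entirely different mechanism: it first shows $M$ is CR-biholomorphic to $\Sigma_{k}\times\mathbb{R}$ for an affine algebraic variety $\Sigma_{k}$, computes the topology ($\pi_{p}(M)=0$ and $\pi_{q}(M,\infty)=0$, whence $M$ is homeomorphic to $\mathbb{R}^{4}\times\mathbb{R}$), and then invokes Ramanujam's theorem that a nonsingular complex algebraic surface which is contractible and simply connected at infinity is isomorphic to $\mathbb{C}^{2}$. No analogue of Ramanujam's theorem is available for $n>2$, which is why the conjecture remains open there even with the volume hypothesis; your proposal would need to supply a substitute for this algebro-geometric rigidity step, not merely the flow estimates.
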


In view of all previous approaches toward Yau uniformization Conjectures, it
is important to know if there exists a nonconstant holomorphic functions of
polynomial growth in a complete noncompact K\"{a}hler manifold of nonnegative
holomorphic bisectional curvature and positive at some point. In fact, for the
family $\mathcal{O}_{d}(M)$ of all holomorphic functions of polynomial growth
of degree at most $d$. $\mathcal{O}_{p}\left(  M\right)  \neq\mathbb{C}$ if
and only if $M$ has the maximal volume growth property. Therefore, for the
family $\mathcal{O}_{d}^{CR}(M)$ of all basic CR-holomorphic functions $f$ of
polynomial growth of degree at most $d$, it is important to know when \
\[
\mathcal{O}_{p}^{CR}\left(  M\right)  \neq\mathbb{C}%
\]
if one wishes to work on Conjecture \ref{conj5}.

From this view point, we first affirmed the part of CR Yau uniformization conjecture on Sasakian manifolds.

\begin{proposition}
\label{P1A} (\cite{chll}) There exists a nonconstant CR holomorphic function
of polynomial growth in a complete noncompact Sasakian $(2n+1)$-manifold of
nonnegative transverse bisectional curvature with the maximal volume growth
property%
\begin{equation}%
\begin{array}
[c]{c}%
\lim_{r\rightarrow+\infty}\frac{\mathrm{Vol}\left(  B_{\xi}\left(  p,r\right)
\right)  }{r^{2n}}\geq\alpha,
\end{array}
\label{2020AAA}%
\end{equation}
for a fixed point $p$ and some positive constant $\alpha$. Here $B_{\xi
}\left(  p,r\right)  $ is the horizontal ball in a Sasakian $(2n+1)$-manifold.
\end{proposition}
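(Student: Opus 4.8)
The plan is to exploit the transverse K\"{a}hler structure: since the Reeb foliation is totally geodesic and the pseudohermitian torsion vanishes, a basic CR-holomorphic function is precisely a transversely holomorphic function for the local transverse K\"{a}hler metric, and the horizontal ball $B_{\xi}(p,r)$ together with its volume records exactly that transverse metric. Thus the task reduces to producing a nonconstant transversely holomorphic function of polynomial growth, and the strategy I would follow is the parabolic one of Lee--Tam and Ni--Tam, transplanted to the Sasaki category via the Sasaki--Ricci (equivalently Sasaki--Chern--Ricci) flow and the transverse Li--Yau--Hamilton Harnack inequality named in the keywords.

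First I would run the Sasaki--Ricci flow with the given metric as initial data. Using the transverse analogue of the Bando--Mok--Hamilton maximum principle, nonnegative transverse bisectional curvature is preserved, and Shi-type estimates give a smooth solution with controlled transverse curvature derivatives on a suitable time interval. Next I would establish the transverse Li--Yau--Hamilton Harnack inequality for the sub-Laplacian heat kernel along this flow (the transverse version of Cao--Hamilton); combined with the maximal volume growth (\ref{2020AAA}) this yields sharp two-sided Gaussian bounds on the heat kernel $H(x,y,t)$. The construction itself I would carry out by a transverse Poincar\'{e}--Lelong argument: deforming the transverse Chern--Ricci form by the heat flow produces a potential $u$ solving $\sqrt{-1}\,\partial_b\bar\partial_b u = \rho$ whose growth is controlled---by the heat-kernel bounds and the monotonicity of the averaged transverse scalar curvature---to be at most logarithmic, and which is transversely plurisubharmonic (strictly so where the curvature is positive). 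A weighted $L^2$ estimate for $\bar\partial_b$ with weight $e^{-u}$, i.e.\ the subelliptic H\"{o}rmander estimate in the CR setting, then solves $\bar\partial_b v = \bar\partial_b(\chi\,\zeta)$ for a suitable cutoff of a transverse coordinate $\zeta$, and $f=\chi\zeta-v$ is the desired nonconstant basic CR-holomorphic function, its polynomial growth read off from the growth of $u$.

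The hard part will be twofold. Analytically, the relevant operator is subelliptic rather than elliptic, so the Poincar\'{e}--Lelong solvability, the monotonicity formula, and the H\"{o}rmander $L^2$ estimate must all be re-derived for $\bar\partial_b$ on the foliated manifold, keeping the Reeb direction under control so that the solutions remain basic; this is precisely where the vanishing torsion of the Sasakian structure is essential. Geometrically, the noncompactness forces the preservation of nonnegative transverse bisectional curvature and the long-time (or sufficiently-long) existence of the flow to be established with only maximal volume growth as a hypothesis, so the maximum principle and the Harnack inequality must be applied with careful control of the geometry at infinity rather than on a closed manifold. Once these transverse and subelliptic analogues are in place, the remaining growth bookkeeping is routine.
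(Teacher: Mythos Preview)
Your approach is genuinely different from the one the paper attributes to \cite{chll}. There the construction follows Liu's strategy: Cheeger--Colding theory is used to identify the tangent cone at infinity, CR-holomorphic functions of controlled growth are built on an exhaustion by domains via the heat-flow technique and the H\"{o}rmander $L^{2}$-estimate, and the CR three-circle theorem is then invoked to pass to a subsequence and obtain a global nonconstant basic CR-holomorphic function of polynomial growth. Your proposal instead runs the Ni--Tam parabolic route through the transverse Poincar\'{e}--Lelong equation and a weighted $\overline{\partial}_{b}$-estimate.

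The substantive gap is your claim that the potential $u$ solving $\sqrt{-1}\,\partial_{b}\overline{\partial}_{b}u=\rho^{T}$ has at most logarithmic growth. That bound is equivalent to the averaged transverse scalar-curvature decay
\[
\frac{1}{\mathrm{Vol}(B_{\xi}(x,r))}\int_{B_{\xi}(x,r)}R^{T}\leq \frac{C}{1+r^{2}},
\]
and in this paper that inequality is precisely Theorem~\ref{T2B}, whose proof \emph{begins} by invoking Proposition~\ref{P1A}: one takes the nonconstant CR-holomorphic function $f$ it supplies, forms $\log(1+|f|^{2})$, produces a section of $K_{M}^{T}$, and runs the moment-type argument of \cite{chl1}. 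So within the paper's logical structure your argument is circular---you are using the consequence to prove the hypothesis. The ``monotonicity of the averaged transverse scalar curvature'' you appeal to is Ni's monotonicity, but that monotonicity gives the quadratic decay only once one already has a polynomial-growth holomorphic section to feed into it; it does not follow from maximal volume growth alone. The Cheeger--Colding/three-circle route avoids this trap because it manufactures the holomorphic function directly from the tangent-cone structure, with no prior curvature-decay input. (A separate, smaller issue: the Li--Yau--Hamilton inequality along the Sasaki--Ricci flow, as in Section~3, controls $R^{T}$ along the flow, not the static heat kernel you need for Poincar\'{e}--Lelong; the heat-kernel bounds you want come from the Li--Yau estimate on the fixed initial metric, so the flow is not actually doing work in the step where you invoke it.)
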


Our method is to construct CR-holomorphic functions with controlled growth in
a sequence of exhaustion domains on $M$ \ via the Cheeger-Colding theory, heat
flow technique and Hormander $L^{2}$-estimate in Sasakian manifolds. Then by
applying the CR analogue of tangent cone at infinity and three circle theorem
to ensure that we can take the subsequence to obtain a nonconstant basic CR
holomorphic function of polynomial growth.

Furthermore, as a consequence of Proposition \ref{P1A} plus the method as in
\cite{l1}, there exist finitely many polynomial growth basic CR-holomorphic
basic functions $f_{1},\cdots,f_{k}$ so that $(f_{1},\cdots,f_{k},t)$ is a
proper basic CR-holomorphic map from $M$ to $\mathbb{C}^{k}\times\mathbb{R}$.
Let $n_{k}=\dim(\mathcal{O}_{k}^{CR}(M))$ for any given $k\in\mathbb{N}$ with
a basis $\{g_{n_{1}},\cdots,g_{n_{k}}\}.$ Define a basic CR-holomorphic map
from $M$ to $\mathbb{C}^{n_{k}}\times\mathbb{R}$ as
\[
F_{k}(x)=(g_{n_{1}}(x),\cdots,g_{n_{k}}(x),t)
\]
with $F_{k}(M)=\Sigma_{k}\times\mathbb{R}.$ Here $\Sigma_{k}$ is the affine
algebraic variety defined the integral ring generated by $g_{n_{1}}%
(x),\cdots,g_{n_{k}}(x)$ in $\mathbb{C}^{n_{k}}$. Hence

\begin{corollary}
Any complete noncompact Sasakian $(2n+1)$-manifold of nonnegative transverse
bisectional curvature with the CR maximal volume growth property is CR
biholomorphic to $\Sigma_{k}\times\mathbb{R}$ \ for an affine algebraic
variety $\Sigma_{k}$ in $\mathbb{C}^{n_{k}}$.
\end{corollary}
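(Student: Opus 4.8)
The plan is to transcribe Liu's method \cite{l1} from the K\"{a}hler category to the transverse K\"{a}hler (Sasakian) category, taking Proposition \ref{P1A} as the starting input. Since the CR maximal volume growth condition (\ref{2020AAA}) already guarantees $\mathcal{O}_{p}^{CR}(M)\neq\mathbb{C}$, the first task is to upgrade this single nonconstant function to a controlled supply of basic CR-holomorphic functions. Concretely, I would first establish the dimension estimate
\[
\dim_{\mathbb{C}}\mathcal{O}_{d}^{CR}(M)\leq C\,d^{n}
\]
for a uniform constant $C$ independent of $d$, by combining the nonnegativity of the transverse bisectional curvature with a CR mean value inequality and a gradient estimate for basic CR-holomorphic functions along the leaves of the Reeb foliation. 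This polynomial bound, matching the Heisenberg growth rate, is the transverse analogue of the dimension counts of Ni and Liu.

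Second, I would show that the graded ring $\mathcal{O}_{P}^{CR}(M)=\bigcup_{d}\mathcal{O}_{d}^{CR}(M)$ is finitely generated over $\mathbb{C}$ and that its transcendence degree equals $n$. Finite generation should follow from the dimension estimate together with the CR three-circle theorem alluded to after Proposition \ref{P1A}, which controls the growth orders of products and quotients of the generators; the transcendence degree computation uses the CR tangent cone at infinity furnished by the Cheeger-Colding theory in the sub-Riemannian setting. Choosing a basis $\{g_{n_{1}},\dots,g_{n_{k}}\}$ of a sufficiently high truncation $\mathcal{O}_{k}^{CR}(M)$ and adjoining the Reeb coordinate $t$ then produces the candidate map
\[
F_{k}(x)=(g_{n_{1}}(x),\dots,g_{n_{k}}(x),t),\qquad F_{k}\colon M\longrightarrow\mathbb{C}^{n_{k}}\times\mathbb{R}.
\]

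Third, I would verify that $F_{k}$ is a proper CR-embedding for $k$ large. Properness amounts to showing $|F_{k}(x)|\to\infty$ as the horizontal (Carnot-Carath\'{e}odory) distance $d_{\xi}(p,x)\to\infty$, which I would extract from lower bounds on the moduli of the generators off a compact set; injectivity and the immersion property follow because the basic generators separate points and differentials transverse to the Reeb flow while $t$ accounts for the flow direction itself. Since $F_{k}$ is built from basic functions, its image splits as $F_{k}(M)=\Sigma_{k}\times\mathbb{R}$, where $\Sigma_{k}\subset\mathbb{C}^{n_{k}}$ is the common zero locus of the relations among the $g_{n_{i}}$; finite generation of $\mathcal{O}_{P}^{CR}(M)$ renders this relation ideal finitely generated, and a Chow-type argument (as in Mok and Liu) shows $\Sigma_{k}$ is affine algebraic.

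The main obstacle will be the algebraicity of $\Sigma_{k}$ together with the properness of $F_{k}$ in the sub-Riemannian setting, since the relevant distance is the horizontal one and the transverse K\"{a}hler structure is defined only modulo the Reeb flow. The technical heart is therefore the CR dimension estimate and the CR three-circle theorem, which must be arranged so that the $L^{2}$-H\"{o}rmander construction underlying Proposition \ref{P1A} yields generators whose growth orders are simultaneously controlled; once these are in place, the finite generation, properness, and Chow-type algebraicity proceed in parallel with \cite{l1}.
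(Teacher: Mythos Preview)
Your proposal is correct and matches the paper's approach: the paper does not supply a separate proof of this Corollary but simply states in the paragraph preceding it that the result is ``a consequence of Proposition \ref{P1A} plus the method as in \cite{l1},'' then records the map $F_{k}=(g_{n_{1}},\dots,g_{n_{k}},t)$ and identifies its image with $\Sigma_{k}\times\mathbb{R}$. Your outline is in fact considerably more explicit than the paper about what carrying over Liu's method entails (the dimension estimate, finite generation, properness, and algebraicity), but the strategy is identical.
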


\begin{remark}
Let $M^{2n+1}$ be a Sasakian manifold as in section $2$ with a Sasakian
structure $(M^{2n+1},\eta,\xi,\Phi,g)$. Then $g|_{D}:=g^{T}$ is transverse
K\"{a}hler metric on the Reeb foliation space and
\[
g=g^{T}+\eta\otimes\eta.
\]
One can regard $(M^{2n+1},\eta,\xi,g)$ as a CR manifold of vanishing
pseudohermitian torsion with the Levi metric $d\eta$ and its associated
Webster adapted metric
\[
g=\frac{1}{2}d\eta+\eta\otimes\eta.
\]
The asymptotic maximal volume ratio can be read as
\[%
\begin{array}
[c]{c}%
\lim_{r\rightarrow\infty}\frac{\mathrm{Vol}_{C.C.}(B_{p}(r))}{r^{2n+2}}%
=\alpha_{0}>0
\end{array}
\]
and
\[%
\begin{array}
[c]{c}%
\lim_{r\rightarrow\infty}\frac{\mathrm{Vol}_{\xi}(B_{p}(r))}{r^{2n}}%
=\alpha_{0}>0,
\end{array}
\]
respectively (\cite{chll}).
\end{remark}

In this paper, following methods as in Chang-Han-Lin-Li (\cite{chll}),
Chen-Zhu (\cite{cz}) and Lee-Tam (\cite{lt2}), we study the Sasakian structure
of complete noncompact manifolds with nonnegative transverse Bisectional
curvature via the Sasaki-Ricci flow (\ref{2020C}). Indeed, by applying
proposition \ref{P1A}, we will construct CR biholomorphisms from a sequence of
open sets which exhaust $M$ onto a fixed ball in $\mathbb{C}^{n}%
\times\mathbb{R}$.

\begin{theorem}
\label{T1} Let $(M,\xi,\eta,\Phi,g,\omega)$ be a complete noncompact Sasakian
$(2n+1)$-manifold of nonnegative transverse bisectional curvature. Suppose
that the maximal volume growth holds, then

\begin{enumerate}
\item $M$ is CR-biholomorphic to $\Omega\times\mathbb{R},$ where $\Omega$ is a
pseuodoconvex domain in the $\mathbb{C}^{n}.$

\item In addition for $n=2$, if the transverse bisectional curvature is
positive, then $M$ must be CR-biholomorphic to the standard Heisenberg group
of dimension five
\[%
\begin{array}
[c]{c}%
\mathbb{H}_{2}=\mathbb{C}^{2}\times\mathbb{R}.
\end{array}
\]

\end{enumerate}
\end{theorem}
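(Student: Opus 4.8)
The plan is to combine the Sasaki-Ricci flow with the polynomial-growth CR holomorphic functions supplied by Proposition \ref{P1A}, adapting the parabolic strategy of Chen-Zhu and Lee-Tam to the transverse K\"{a}hler geometry. First I would run the Sasaki-Ricci flow (\ref{2020C}) with the given transverse K\"{a}hler metric as initial data. The basic structural fact to establish is that nonnegative transverse bisectional curvature is preserved along the flow: since the pseudohermitian torsion vanishes, the transverse curvature operator evolves exactly as under the K\"{a}hler-Ricci flow, so a Hamilton-type maximum principle (the Sasaki analogue of the Bando-Mok invariance theorem) keeps the curvature cone invariant. Together with the maximal volume hypothesis this gives uniform noncollapsing, and the transverse Li-Yau-Hamilton (Harnack) inequality then furnishes the curvature-decay estimates needed below.

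For part (1), I would fix an exhaustion $\Omega_1\subset\Omega_2\subset\cdots$ of $M$ and use Proposition \ref{P1A}, together with the transverse H\"{o}rmander $L^2$-estimate, to produce on each $\Omega_i$ basic CR holomorphic functions $f_1,\dots,f_n$ of controlled polynomial growth whose transverse differentials are linearly independent. Adjoining the Reeb coordinate $t$ gives a CR map
\[
F_i=(f_1,\dots,f_n,t)\colon \Omega_i \longrightarrow \mathbb{C}^n\times\mathbb{R}.
\]
The three-circle theorem controls the growth of these functions and prevents the transverse Jacobian from degenerating, so that each $F_i$ is a CR biholomorphism onto a set $B_i\times\mathbb{R}$ with $B_i\subset\mathbb{C}^n$ a domain biholomorphic to a ball. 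Letting $i\to\infty$ produces a global CR biholomorphism $F\colon M \to \Omega\times\mathbb{R}$, where $\Omega=\bigcup_i B_i$ is an increasing union of balls; by the Behnke-Stein theorem such a union is pseudoconvex, which is the desired conclusion.

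For part (2), I would assume $n=2$ and upgrade nonnegativity to positivity of the transverse bisectional curvature. The transverse K\"{a}hler structure is then a K\"{a}hler surface of positive bisectional curvature with maximal volume growth, and the CR holomorphic functions of part (1) realize its leaf quotient as a genuine complex surface. Invoking Mok's uniformization theorem for such surfaces (equivalently, the sharp low-dimensional dimension estimates for polynomial-growth holomorphic functions), this transverse structure is biholomorphic to $\mathbb{C}^2$, so $\Omega=\mathbb{C}^2$ and $M$ carries the flat transverse K\"{a}hler structure. Hence $M$ is CR-biholomorphic to $\mathbb{C}^2\times\mathbb{R}=\mathbb{H}_2$.

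The hard part will be this last step. Nonnegative curvature and maximal volume growth readily yield a pseudoconvex domain, but forcing $\Omega$ to be all of $\mathbb{C}^2$ requires positivity to guarantee simultaneously the sharp Euclidean volume growth of the transverse tangent cone and the vanishing of its curvature. Although the K\"{a}hler analogue is known in every dimension, the transverse Sasaki-Ricci flow estimates that push the domain up to the full $\mathbb{C}^n$ close up only for $n=2$, where Mok's surface theorem and the corresponding three-circle and frequency estimates are available; in higher transverse dimensions the tangent-cone analysis requires curvature control not yet at hand in the Sasaki setting, which is why the uniformization is proved only in dimension five.
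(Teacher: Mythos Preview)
Your outline diverges from the paper in both parts, and part (2) has a genuine gap.

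For part (1), the paper does not build the CR biholomorphism out of polynomial-growth holomorphic functions. Instead it follows the Chen--Zhu route at the transverse level: Theorem~\ref{T2B} (whose proof \emph{is} where Proposition~\ref{P1A} enters, to obtain the average scalar curvature decay $r^{-2}$) feeds into Theorem~\ref{T1A} to give a long-time Sasaki--Ricci flow with $|Rm^{T}|\le C/t$ and $\mathrm{inj}\ge C\sqrt{t}$. One then takes the \emph{transverse exponential map} at $p_{0}$ in the evolved metric, obtaining coordinates $z^{\alpha}$ on a ball of radius $\sim t^{1/4}$ that are almost holomorphic for the pulled-back $\Phi$, corrects them by solving $\bar\partial_t\zeta^{\alpha}=\bar\partial_t z^{\alpha}$ with $L^2$ estimates, and glues \`a la Shi. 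Your scheme of producing $n$ basic CR holomorphic functions with independent transverse differentials on each $\Omega_i$ is closer to Liu's elliptic approach; it may be viable, but it is not what the paper does, and in particular the three-circle theorem alone does not prevent the transverse Jacobian from degenerating---one needs a separate argument for injectivity and nondegeneracy.

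For part (2), invoking ``Mok's uniformization theorem'' is a gap. Mok's theorem is for \emph{compact} K\"ahler manifolds; in the noncompact case the statement that a K\"ahler surface of positive bisectional curvature and maximal volume growth is biholomorphic to $\mathbb{C}^{2}$ is precisely the Yau conjecture you are trying to establish, so the reasoning is circular. The paper's argument is different and is the actual reason $n=2$ is singled out: by the Corollary after Proposition~\ref{P1A}, $M$ is CR-biholomorphic to $\Sigma\times\mathbb{R}$ for an affine algebraic variety $\Sigma\subset\mathbb{C}^{N}$; by Theorem~\ref{T2A}(2), $M$ is homeomorphic to $\mathbb{R}^{5}$, hence $\Sigma$ is a nonsingular contractible algebraic surface that is simply connected at infinity. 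Ramanujam's theorem then forces $\Sigma\cong\mathbb{C}^{2}$. Your explanation that ``the Sasaki--Ricci flow estimates close up only for $n=2$'' is not the obstruction; the flow estimates work in all dimensions, but Ramanujam's topological characterization of $\mathbb{C}^{2}$ has no higher-dimensional analogue.
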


Theorem \ref{T1} will be a consequence of Theorem \ref{T2A} and Theorem
\ref{T2B}.\ Moreover, by a theorem of Ramanujam \cite{r} which states that any
non-singular complex algebraic surface of contractible and simply connected at
infinity must be isomorphic to the affine two-space as algebraic variety.
Then, as a consquence of Theorem \ref{T1} and Theorem \ref{T2A}, one can
confirm the CR Yau uniformization conjecture with the maximal volume growth
condition for $n=2$.

By using the standard method as in the paper of Shi \cite{s1}, we have the
short-tome solution of the Sasaki-Ricci flow in a complete noncompact Sasakian
$(2n+1)$-manifold with bounded transverse holomorphic bisectional curvature.
However, by applying the so-called Sasaki-Chern-Ricci flow (\ref{1}) which is
the Sasaki analogue of Chern-Ricci flow as in recent papers of Lee-Tam
(\cite{lt2}), Tosatti-Weinkove (\cite{tw}) and Lott-Zhang (\cite{lz}), we have
the Sasaki analogue of the short-time solution for Sasaki-Ricci flow\ by removing the boundedness of%
transverse holomorphic bisectional curvature.

\begin{proposition}
\label{P1} Let $(M,\xi,\eta,g_{0},\Phi,\omega_{0})$ be a complete noncompact
Sasakian $(2n+1)$-manifold with transverse nonnegative holomorphic bisectional
curvature and $V_{\xi}(x,1)\geq v_{0}$ for some $v_{0}>0$ and all $x\in M$. Then

\begin{enumerate}
\item there exist $T_{0}(n,v_{0})>0,C_{0}(n,v_{0})>0$ such that a family of
Sasakian structures $(M,\xi(t),\eta(t),g(t),\Phi(t),\omega(t))$ satisfying the
Sasaki-Ricci flow
\begin{equation}%
\begin{array}
[c]{lll}%
\frac{d}{dt}g_{i\overline{j}}^{T}(x,t) & = & -R_{i\overline{j}}^{T}(x,t),\\
g_{i\overline{j}}^{T}(x,0) & = & g_{i\overline{j}}^{T}(x).
\end{array}
\label{2020C}%
\end{equation}
with
\begin{equation}%
\begin{array}
[c]{c}%
||Rm^{T}||^{2}(x,t)\leq\frac{C_{0}}{t},\text{ \textrm{on}\ }M\times
\lbrack0,T_{0}).
\end{array}
\label{AAA}%
\end{equation}

\item $g(t)$ has transverse nonnegative holomorphic bisectional curvature and
the maximal volume growth condition is preserving.

\item
\[%
\begin{array}
[c]{c}%
V_{\xi}(x,1)\geq\frac{1}{2}v_{0}%
\end{array}
\]
on $M\times\lbrack0,T_{0}).$
\end{enumerate}
\end{proposition}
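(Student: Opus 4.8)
The plan is to reduce the Sasaki--Ricci flow (\ref{2020C}) to a transverse K\"{a}hler--Ricci flow and then to run the noncompact short-time existence scheme of Shi, in the form adapted to the Chern--Ricci setting by Lee--Tam, Tosatti--Weinkove and Lott--Zhang. Because (\ref{2020C}) evolves only the transverse K\"{a}hler metric $g^T_{i\overline{j}}$ while leaving the Reeb field $\xi$ and the underlying CR structure essentially unchanged, it restricts on each transverse slice to a genuine K\"{a}hler--Ricci flow, so that all local parabolic estimates for K\"{a}hler metrics transfer verbatim to the basic/foliated setting. Since the transverse holomorphic bisectional curvature is only assumed nonnegative and hence possibly unbounded, Shi's theorem does not apply directly. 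I would therefore first approximate $g_0$ by a sequence of complete transverse K\"{a}hler metrics $g_{0,j}$ of bounded curvature --- produced by solving the flow, or its Chern--Ricci version (\ref{1}), on an exhausting family of relatively compact basic domains with prescribed boundary data --- solve (\ref{2020C}) on a short interval for each $j$ by Shi's theorem, and then extract a limit flow once estimates uniform in $j$ are in hand.

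The analytic heart is the uniform bound (\ref{AAA}), and this is exactly where the two hypotheses enter. Nonnegativity of the transverse bisectional curvature yields the pointwise comparison $\|Rm^T\|\le C(n)\,R^T$, so that (\ref{AAA}) reduces to a Shi-type bound on the transverse scalar curvature $R^T$; the uniform non-collapsing $V_\xi(x,1)\ge v_0$ supplies the geometric input needed to obtain it. I would establish the scalar bound by a localized maximum-principle and point-picking argument in the spirit of the Chen--Tam--Ni estimates, combining the Li--Yau--Hamilton (trace Harnack) inequality --- available for the transverse K\"{a}hler--Ricci flow precisely because $Rm^T\ge 0$ is preserved --- with the evolution of the volume element. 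The Harnack inequality controls the concentration of $R^T$ backward along space--time, while the volume lower bound caps how much volume can be absorbed; balancing the two produces a bound of Shi type on a uniform interval $[0,T_0)$ with $T_0$ and the constant depending only on $n$ and $v_0$.

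For part (2), preservation of $Rm^T\ge 0$ is the Sasaki analogue of the Bando--Mok theorem: along (\ref{2020C}) the transverse curvature operator satisfies a reaction--diffusion equation whose reaction term preserves the cone of nonnegative bisectional curvature, so a maximum principle gives the claim. On the approximating flows curvature is bounded, so Hamilton's maximum principle for complete manifolds applies unconditionally, and nonnegativity then passes to the limit. The maximal volume growth and the lower bound in part (3) I would obtain together with (\ref{AAA}) by a continuity argument in time: as long as $V_\xi(x,1)\ge\tfrac12 v_0$ the local estimate of the previous paragraph yields (\ref{AAA}), and (\ref{AAA}) in turn --- through the resulting distance-distortion control (the naive pointwise integration of the curvature bound is not available at $t=0$) and Bishop--Gromov comparison, valid since $Rm^T\ge 0$ --- prevents $V_\xi(x,1)$ from dropping below $\tfrac12 v_0$ before a definite time. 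Choosing $T_0(n,v_0)$ small enough closes this bootstrap and simultaneously shows the maximal volume growth is preserved, the volume element being nonincreasing because $R^T\ge 0$.

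The main obstacle is the interdependence of the three conclusions on a noncompact manifold with a priori unbounded curvature: one needs (\ref{AAA}) to extract the limit flow, but (\ref{AAA}) is cleanest to prove once $Rm^T\ge 0$ and the non-collapsing are known, and these in turn are most naturally maintained only after the curvature is controlled. I would break this circle on the approximating flows $g_j(t)$, where bounded curvature makes Shi's estimates and Hamilton's maximum principle available without qualification, prove (\ref{AAA}), nonnegativity and the volume bound with constants independent of $j$, and only then let $j\to\infty$. The delicate point is to make the local curvature estimate depend on $n$ and $v_0$ alone --- with no residual dependence on the a priori curvature bound of $g_j$ --- and to carry the non-collapsing forward uniformly in $j$, which is precisely what fixes the universal existence time $T_0$.
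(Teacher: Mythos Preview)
Your overall architecture --- build approximate flows, prove estimates depending only on $(n,v_0)$, pass to a limit --- matches the paper's. The disagreement is in the two load-bearing steps, and in one of them your proposed mechanism does not close.

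\emph{Construction of the approximate flows.} The paper does not try to approximate $g_0^T$ by bounded-curvature transverse K\"ahler metrics on exhausting domains; producing such approximations while keeping nonnegative bisectional curvature and the volume bound is itself a hard problem. Instead, following Lee--Tam \cite{lt2}, on each precompact piece one conformally completes $h_0=e^{2F}g_0^T$ to a complete transverse \emph{Hermitian} metric of bounded geometry of infinite order and then runs the Sasaki--\emph{Chern}--Ricci flow (\ref{1}), (\ref{2025C}), whose existence time is identified in Theorem \ref{AT1}. On the core where $F\equiv 0$ this is the genuine Sasaki--Ricci flow with $g^T(t)$ uniformly equivalent to $g_0^T$. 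You name the Chern--Ricci flow in your first sentence and then set it aside; in the paper it is the mechanism that manufactures the local solutions, not an optional variant.

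\emph{The curvature bound} (\ref{AAA}). Here is the gap. The trace Harnack (Corollary \ref{Cor}) says $tR^T(x,t)$ is pointwise \emph{nondecreasing} in $t$; this is the wrong monotonicity for an upper bound $R^T\le C_0/t$, and your heuristic about ``balancing'' Harnack against volume absorption does not explain how to reverse it. In the paper the Harnack inequality appears only in Section 4, where it converts the lower bound on $F=\log\det(g^T(t)/g^T(0))$ from Lemma \ref{22} into a scalar bound --- and Lemma \ref{22} requires the averaged-curvature-decay hypothesis of Theorem \ref{T1A}, not merely $V_\xi(x,1)\ge v_0$. What actually yields (\ref{AAA}) from non-collapsing alone is the pseudolocality-type local estimate of Lee--Tam \cite{lt1} and Simon--Topping \cite{st}, recorded as the second lemma of the appendix: a lower bound $BK(g^T(t))\ge -r^{-2}$ on $B_{g_t^T}(x_0,r)$ together with $V_{g_0^T}(x_0,r)\ge v_0 r^{2n}$ forces $|Rm^T|\le C_0/t$ and $\mathrm{inj}\ge (C^{-1}t)^{1/2}$ on the half-ball up to time $\overline{S}(n,v_0)r^2$. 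This is proved by point-picking and distance-distortion control, not by Harnack, and it is what pins $T_0$ and $C_0$ to $(n,v_0)$ independently of any a priori curvature bound on the approximants. With this estimate substituted for your Harnack step, the bootstrap you outline for parts (2) and (3) is correct.
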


For a completeness, we will give a proof as in the appendix.

First we will have the
long-time solution of (\ref{2020C}):

\begin{theorem}
\label{T1A}Let $(M,\xi,\eta,g,\Phi,\omega)$ be a complete noncompact Sasakian
$(2n+1)$-manifold with nonnegative transverse bisectional curvature. Suppose
that there exists a constant $C_{2}>0$ such that
\[%
\begin{array}
[c]{c}%
\frac{1}{\mathrm{Vol}(B_{\xi}(x,r))}\int_{B_{\xi}(x,r)}R^{T}\omega^{n}%
\wedge\eta\leq\frac{C_{2}}{(1+r)^{2}},\text{\ \textrm{for all} }r,\text{ }x\in
M.
\end{array}
\]
Then the Sasaki-Ricci flow has a long time solution $g(x,t)$ on $M\times
\lbrack0,\infty).$ Moreover, the following are true:

\begin{enumerate}
\item For any $t\geq0$, $g_{i\overline{j}}^{T}(x,t)$ is transverse K\"{a}hler
with nonnegative transverse bisectional curvature.

\item For any integer $m\geq0$, there is a constant $C_{1}$ such that%
\begin{equation}%
\begin{array}
[c]{c}%
||\nabla^{m}Rm^{T}||^{2}(x,t)\leq\frac{C_{1}}{t^{1+m}},\text{\ \textrm{for
all} }t>0,\text{ }x\in M.
\end{array}
\label{BBB}%
\end{equation}

\item If in addition $(M,\eta,\Phi,g_{i\overline{j}}^{T}(x))$ has maximal
volume growth, then there exists a positive constant $C_{2}$ depending only on
the initial metric such that the injectivity radius of $g_{i\overline{j}%
}(x,t)$ is bounded below by $C_{2}t^{\frac{1}{2}}$ for all $t\geq1$.
\end{enumerate}
\end{theorem}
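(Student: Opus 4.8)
The plan is to treat the Sasaki-Ricci flow (\ref{2020C}) as the transverse K\"{a}hler-Ricci flow along the Reeb foliation, so that every local K\"{a}hler-Ricci flow estimate transfers verbatim to the basic (transverse) tensors, and to reduce the entire statement to obtaining a uniform curvature bound on each finite time interval. First I would invoke Proposition \ref{P1} to produce a short-time solution on $M\times[0,T_{0})$ satisfying $||Rm^{T}||^{2}\leq C_{0}/t$, together with the preservation of nonnegative transverse bisectional curvature and the volume lower bound; the Sasaki-Chern-Ricci flow used in that proposition is precisely what removes any a priori boundedness of $Rm^{T}$. By Shi's continuation criterion adapted to the transverse connection, the flow extends past any time $T$ at which $\sup_{M}||Rm^{T}||(\cdot,t)$ stays finite, so the whole problem collapses to ruling out finite-time blow-up.

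The crux is converting the averaged scalar curvature decay hypothesis into a pointwise, uniform-in-time bound on $R^{T}$. Since nonnegativity of the transverse bisectional curvature is preserved along the flow (this is simultaneously Part (1), obtained by Hamilton's tensor maximum principle applied to the transverse curvature operator, exactly as in Cao's K\"{a}hler argument), one has the pointwise domination $||Rm^{T}||\leq c_{n}R^{T}$, so it suffices to control $R^{T}$. Here I would deploy the Li-Yau-Hamilton (Cao) Harnack inequality for the transverse K\"{a}hler-Ricci flow with nonnegative bisectional curvature, which yields the monotonicity of $t\mapsto tR^{T}$ along space-time curves, together with a heat-kernel/mean-value comparison governed by the volume lower bound, to show that the averaged decay $\frac{1}{\mathrm{Vol}(B_{\xi}(x,r))}\int_{B_{\xi}(x,r)}R^{T}\omega^{n}\wedge\eta\leq C_{2}/(1+r)^{2}$ persists under the flow and forces $\sup_{M}R^{T}(\cdot,t)$ to remain finite on each $[0,T]$. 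This produces the long-time solution on $M\times[0,\infty)$ and establishes Part (1).

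With a local curvature bound in hand, Part (2) follows from Shi's interior higher-order derivative estimates adapted to the transverse covariant derivative: the Bernstein-type computation for the evolution of $||\nabla^{m}Rm^{T}||^{2}$ coupled to the lower-order jets, combined with a cutoff argument over horizontal balls and induction on $m$, delivers $||\nabla^{m}Rm^{T}||^{2}\leq C_{1}/t^{1+m}$. For Part (3), maximal volume growth supplies uniform noncollapsing of horizontal balls; together with the decay $||Rm^{T}||\leq C/t$ the rescaled metrics $t^{-1}g(t)$ have uniformly bounded geometry with noncollapsed transverse volume, and the Cheeger-Gromov-Taylor injectivity radius estimate then yields $\mathrm{inj}(g(x,t))\geq C_{2}t^{1/2}$ for $t\geq1$.

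The main obstacle is the long-time existence argument of the second paragraph. One must propagate the averaged scalar curvature decay along the flow within the foliated setting, ensuring that all functions and comparison estimates remain basic (invariant along $\xi$) and that the Harnack and heat-kernel inequalities are carried out with respect to the horizontal Carnot-Carath\'{e}odory distance and the transverse volume. The delicate point is that the hypothesis is an \emph{integral} bound rather than a pointwise one, so extracting a uniform supremum bound on $R^{T}$ is not a direct maximum-principle matter; it requires marrying the Harnack monotonicity of $tR^{T}$ to the noncollapsing volume lower bound to transfer the averaged control into a pointwise one that survives on every finite interval.
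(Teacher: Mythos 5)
Your treatment of parts (1), (2) and (3) is essentially the paper's: preservation of nonnegativity via the Mok--Bando null-vector property and Hamilton's tensor maximum principle, Shi--Bernstein estimates for (\ref{BBB}), and preserved maximal volume growth plus the decay $R^{T}\leq C/(1+t)$ plus Cheeger--Gromov--Taylor for the injectivity radius. The gap is in the long-time existence step, which is the heart of the theorem. You propose to convert the averaged decay hypothesis into a pointwise, uniform-in-time bound on $R^{T}$ by combining the Li--Yau--Hamilton Harnack inequality with a ``heat-kernel/mean-value comparison'' and an unproved assertion that the averaged decay persists under the flow. Two concrete problems. First, along the flow $R^{T}$ satisfies $\partial_{t}R^{T}=\Delta_{B}R^{T}+|Ric^{T}|^{2}\geq\Delta_{B}R^{T}$, so it is a \emph{supersolution} of the heat equation; mean-value inequalities for supersolutions bound spatial averages from below by pointwise values, not the reverse, so an averaged bound does not directly yield a pointwise upper bound by this route. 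Second, you never identify the quantity that transports the $t=0$ integral hypothesis to positive times; ``the averaged decay persists under the flow'' is exactly what needs proof.

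The paper's proof supplies precisely this missing link, namely Lemma \ref{22}. Setting $F(x,t)=\log\bigl(\det g^{T}(x,t)/\det g^{T}(x,0)\bigr)$, one has $F(x,t)=-\int_{0}^{t}R^{T}(x,s)\,ds$ together with the \emph{elliptic} inequality $\Delta_{B,g^{T}(0)}F(\cdot,t)\leq R^{T}(\cdot,0)$ with respect to the \emph{fixed initial metric}. A Green's-function argument in the style of Chen--Zhu, using only the initial metric, its volume lower bounds, and the hypothesis with decay rate $(1+r)^{-2}$ (the case $\varepsilon=1$ of the lemma), gives the uniform bound $F\geq -C$ for all $t$. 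Since also $g^{T}(t)\leq g^{T}(0)$ by nonnegativity of $Ric^{T}$, this yields uniform metric equivalence $e^{-C}g^{T}(0)\leq g^{T}(t)\leq g^{T}(0)$, i.e.\ the second-order estimate for the parabolic Monge--Amp\`ere equation, whence Calabi--Yau--Shi higher-order estimates and long-time existence on finite intervals. Only afterwards --- when curvature bounds are already available, as Theorem \ref{Thm LYH} itself assumes bounded curvature --- does the paper invoke the trace Li--Yau--Hamilton inequality: monotonicity of $tR^{T}$ gives $(\log 2)\,tR^{T}(x,t)\leq-F(x,2t)\leq C$, hence $R^{T}\leq C/(1+t)$, which is what feeds into part (3). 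Your outline inverts this logical order and omits the $F$-estimate, which is the actual content of the long-time existence proof; as written, the central step does not close.
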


\begin{theorem}
\label{T2A}Let $(M,\xi,\eta,g,\Phi,\omega)$ be a complete noncompact Sasakian
$(2n+1)$-manifold with nonnegative transverse bisectional curvature. Suppose
that for some positive constants $C_{1},C_{2}$, we have the maximal volume
growth%
\[%
\begin{array}
[c]{c}%
\mathrm{Vol}(B_{\xi}(p,r))\geq C_{1}r^{2n},\text{ \ \textrm{for all} }r\text{
\textrm{and some }}p\in M
\end{array}
\]
and
\[%
\begin{array}
[c]{c}%
\frac{1}{\mathrm{Vol}(B_{\xi}(x,r))}\int_{B_{\xi}(x,r)}R^{T}\omega^{n}%
\wedge\eta\leq\frac{C_{2}}{(1+r)^{2}},\text{\textrm{ for all} }r,x\in M.
\end{array}
\]
Then

\begin{enumerate}
\item $M$ is CR-biholomorphic to $\Omega\times\mathbb{R},$ where $\Omega$ is a
pseuodoconvex domain in the $\mathbb{C}^{n}$.

\item $M$ is homeomorphic to $\mathbb{R}^{4}\times\mathbb{R}$ for $n=2$ and
$M$ is diffeomorphic to $\mathbb{R}^{2n}\times\mathbb{R}$ for $n>2.$
\end{enumerate}
\end{theorem}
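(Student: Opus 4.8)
The plan is to reduce Theorem \ref{T2A} to the Kähler structure transverse to the Reeb foliation, run the Sasaki-Ricci flow to manufacture a uniformizing map into $\mathbb{C}^n\times\mathbb{R}$, and then identify the image domain and its topology. First I would invoke Theorem \ref{T1A}: under the stated hypotheses (nonnegative transverse bisectional curvature together with the average scalar-curvature decay $\frac{1}{\mathrm{Vol}(B_\xi(x,r))}\int_{B_\xi(x,r)}R^T\,\omega^n\wedge\eta\le C_2/(1+r)^2$) the Sasaki-Ricci flow (\ref{2020C}) admits a long-time solution $g(x,t)$ on $M\times[0,\infty)$ whose transverse curvature stays nonnegative, satisfies the derivative bounds (\ref{BBB}), and — thanks to the maximal volume growth $\mathrm{Vol}(B_\xi(p,r))\ge C_1 r^{2n}$ — enjoys a lower injectivity-radius bound $\mathrm{inj}\ge C_2 t^{1/2}$ for $t\ge1$. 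These three properties are exactly the ingredients one feeds into a pointed Cheeger–Gromov type argument, applied leaf-wise on the transverse Kähler space.

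Next I would construct the CR-biholomorphism of part (1). The strategy follows Chen--Zhu (\cite{cz}) and Lee--Tam (\cite{lt2}) transplanted to the Sasaki setting as in \cite{chll}. Using the existence of polynomial-growth basic CR-holomorphic functions guaranteed by Proposition \ref{P1A}, together with the curvature decay along the flow, one produces on each time-slice a transverse holomorphic frame; the Li-Yau-Hamilton Harnack inequality for the Sasaki-Ricci flow controls how these frames evolve, so that after rescaling one obtains a sequence of basic CR-holomorphic embeddings of exhausting open sets of $M$ into a fixed polydisc in $\mathbb{C}^n\times\mathbb{R}$. Passing to a limit and using that the Reeb field $\xi$ integrates to the $\mathbb{R}$-factor, the image is a product $\Omega\times\mathbb{R}$ with $\Omega\subset\mathbb{C}^n$; the pseudoconvexity of $\Omega$ comes from the nonnegativity of the transverse bisectional curvature, which forces the Levi form of the limiting domain to be nonnegative. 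This yields statement (1).

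For part (2) I would read off the topology of $M$ from the structure $M\cong\Omega\times\mathbb{R}$. Since the $\mathbb{R}$-factor is contractible, $M$ is homotopy equivalent to $\Omega$, and $\Omega$ is a smoothly embedded pseudoconvex domain in $\mathbb{C}^n$ that arises as an increasing union of the images of the exhaustion; the maximal-volume-growth normalization makes $\Omega$ diffeomorphic to $\mathbb{R}^{2n}=\mathbb{C}^n$. When $n>2$ the diffeomorphism $\Omega\cong\mathbb{R}^{2n}$ can be upgraded directly, giving $M$ diffeomorphic to $\mathbb{R}^{2n}\times\mathbb{R}$. When $n=2$ one only obtains $M$ homeomorphic to $\mathbb{R}^4\times\mathbb{R}$, reflecting the well-known failure of the smooth four-dimensional Poincaré/Schoenflies phenomena, so the statement is correctly stated as a homeomorphism in that case.

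\emph{The main obstacle} I anticipate is the compactness-and-limit step in part (1): showing that the rescaled basic CR-holomorphic embeddings converge, in an appropriate $C^\infty_{\mathrm{loc}}$ topology equivariant with respect to the Reeb flow, to an honest CR-biholomorphism rather than a degenerate map. This requires uniform (in $t$ and in the exhaustion index) a priori estimates on the maps and their inverses — precisely the place where the Harnack inequality, the derivative bounds (\ref{BBB}), and the injectivity-radius lower bound must be combined carefully — and where one must verify that the transverse and Reeb directions stay transverse in the limit so that the product splitting $\Omega\times\mathbb{R}$ genuinely persists. The pseudoconvexity of $\Omega$ is then a comparatively soft consequence, but the nondegeneracy of the limiting embedding is the crux.
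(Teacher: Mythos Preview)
Your overall architecture (invoke Theorem \ref{T1A} to get a long-time Sasaki--Ricci flow with nonnegative transverse bisectional curvature, derivative bounds (\ref{BBB}), and injectivity radius $\gtrsim t^{1/2}$) is correct and matches the paper. However, the mechanism you describe for both parts diverges from the paper in ways that matter.

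For part (1), you propose to use Proposition \ref{P1A} to produce polynomial-growth basic CR-holomorphic functions and assemble them into embeddings. This is \emph{not} what the paper does here (Proposition \ref{P1A} is used in the proof of Theorem \ref{T2B}, not of Theorem \ref{T2A}). The paper follows the Chen--Zhu/Shi route literally: at each large time $t$ it takes the \emph{transverse exponential map} $\varphi_t=\exp_{p_0}^T\circ L^{-1}$ on a ball of radius $\sim t^{1/2}$, pulls back the metric, and uses Hamilton's normal-coordinate estimates to show the pulled-back complex structure $\varphi_t^*\Phi$ differs from the standard $J_{\mathbb{C}^n}$ by $O(|z|^2/t)$. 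One then solves $\bar\partial_t\zeta^\alpha=\bar\partial_t z^\alpha$ by $L^2$ estimates to obtain genuinely holomorphic coordinates $\Psi_t^\alpha=z^\alpha-\zeta^\alpha$; the composition $\Psi_t\circ\varphi_t^{-1}$ is the desired injective holomorphic map on $B_\xi(p_0,r(t))$. A Shi-type gluing over a sequence $t_k\to\infty$ produces the biholomorphism onto a pseudoconvex $\Omega\subset\mathbb{C}^n$, and extension along the Reeb flow gives $\Omega\times\mathbb{R}$. The Harnack inequality enters only indirectly (to get $R^T\le C/(1+t)$ and hence the injectivity-radius bound), not to ``control how frames evolve.'' Your worry about compactness of a sequence of global CR-holomorphic embeddings is therefore misplaced: the actual construction is local-analytic, not a Cheeger--Gromov limit of embeddings.

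For part (2), your plan to read off the topology from the pseudoconvexity of $\Omega$ is a gap: a pseudoconvex domain in $\mathbb{C}^n$ need not be diffeomorphic to $\mathbb{R}^{2n}$, and ``maximal-volume-growth normalization'' does not supply this. The paper proves (2) \emph{independently of} (1): the injectivity-radius bound $\mathrm{inj}(g(\cdot,t))\ge C_2 t^{1/2}$, together with the shrinking of metrics under the flow, forces $\pi_p(M)=0$ for all $p\ge1$ and $\pi_q(M,\infty)=0$ for $1\le q\le 2n-1$. One then invokes the generalized Poincar\'e conjecture (Freedman for $n=2$, Smale for $n>2$) to get $M$ homeomorphic to $\mathbb{R}^{2n+1}$, and Gompf's result on exotic $\mathbb{R}^4$'s explains why only a homeomorphism is claimed when $n=2$ while a diffeomorphism holds for $n>2$.
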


Next we show that

\begin{theorem}
\label{T2B}Let $(M,J,\theta)$ be a complete noncompact Sasakian $(2n+1)$%
-manifold with nonnegative transverse bisectional curvature and the maximal
volume growth. Then we have
\[%
\begin{array}
[c]{c}%
\frac{1}{B_{\xi}(x,r)}\int_{B_{\xi}(x,r)}R^{T}\omega^{n}\wedge\eta\leq\frac
{C}{1+r^{2}}%
\end{array}
\]
for all $r$, $x\in M$ and a positive constant $C$.
\end{theorem}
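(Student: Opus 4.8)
The plan is to reduce the statement to a potential‑theoretic estimate on the transverse K\"ahler structure and then to upgrade an integrability statement to the desired pointwise quadratic decay by means of a monotonicity formula. First I would pass to the transverse geometry: since the Reeb foliation carries a transverse K\"ahler metric $g^{T}$ of complex dimension $n$ with nonnegative bisectional curvature, and since the horizontal balls satisfy $\mathrm{Vol}(B_{\xi}(x,r))\asymp r^{2n}$ under the maximal volume growth hypothesis, all of the analysis can be carried out for the basic sub-Laplacian $\Delta_{B}$ acting on basic functions. Invoking the short-time Sasaki–Ricci flow of Proposition~\ref{P1} (which needs only the uniform lower bound $V_{\xi}(x,1)\ge v_{0}$, a consequence of maximal volume growth, and \emph{not} any curvature-decay assumption), I obtain a solution on $[0,T_{0})$ with the smoothing estimate $\|Rm^{T}\|^{2}\le C_{0}/t$, along which nonnegative transverse bisectional curvature and the asymptotic volume ratio are both preserved. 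This supplies the regularity and the preserved geometry needed below.

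Second, I would set up the Poincar\'e–Lelong machinery. Because $Rc^{T}\ge 0$ and maximal volume growth forces nonparabolicity (here the transverse real dimension $2n\ge 4$), the transverse manifold admits a positive minimal Green's function $G$ for $\Delta_{B}$ with two-sided bounds $c^{-1}d_{\xi}(x,y)^{2-2n}\le G(x,y)\le c\,d_{\xi}(x,y)^{2-2n}$, obtained from the Li–Yau Gaussian heat-kernel estimates together with volume comparison. Setting
\[
u(x)=\int_{M} G(x,y)\,R^{T}(y)\,\omega^{n}\wedge\eta ,
\]
the function $u$ is nonnegative and solves the transverse Poisson equation $\Delta_{B}u=R^{T}$ in the basic sense; since the transverse Ricci form $\rho^{T}$ is a closed basic $(1,1)$-form which is nonnegative (as the bisectional curvature is $\ge 0$), $u$ is a genuine potential for $\rho^{T}$ and is transverse plurisubharmonic. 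Decomposing the defining integral over dyadic annuli and using the two-sided bounds on $G$ and on $\mathrm{Vol}(B_{\xi})$, one converts the finiteness and growth of $u$ into
\[
u(x_{0})\asymp\int_{1}^{\infty} s\,\Big(\tfrac{1}{\mathrm{Vol}(B_{\xi}(x_{0},s))}\int_{B_{\xi}(x_{0},s)}R^{T}\,\omega^{n}\wedge\eta\Big)\,ds ,
\]
so that control on $u$ is equivalent to integrability of $s\,\bar R(s)$, where $\bar R(s)$ denotes the horizontal-ball average appearing in the theorem.

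The decisive third step is to upgrade this integrability to the uniform pointwise bound $r^{2}\bar R(r)\le C$. For this I would establish the transverse analogue of Ni's monotonicity formula: using the heat equation for $R^{T}$ on the fixed transverse metric (or, equivalently, the trace Li–Yau–Hamilton inequality along the short-time flow, which gives $\partial_{t}(tR^{T})\ge 0$), the nonnegativity of the transverse bisectional curvature yields monotonicity of the appropriately normalized average of $R^{T}$. Combined with the finiteness from the previous step and the preserved maximal volume growth, this monotonicity pins down $\bar R(r)\le C/(1+r^{2})$ for all $r$ and all $x\in M$, which is the claim. I expect this monotonicity step to be the main obstacle: carrying Ni's argument through for the sub-Laplacian $\Delta_{B}$ on basic functions, rather than a genuine Laplacian, requires controlling the boundary contributions in the integration by parts over the horizontal balls $B_{\xi}(x,r)$ and verifying that the Li–Yau heat-kernel estimates and the Reeb-direction terms do not spoil the exact monotone quantity. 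The smoothing estimate $\|Rm^{T}\|^{2}\le C_{0}/t$ and the preservation of the asymptotic volume ratio from Proposition~\ref{P1} are precisely what make these boundary and error terms controllable.
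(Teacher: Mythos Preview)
Your proposal has a genuine logical gap at the heart of the argument. In your second step you define $u(x)=\int_{M}G(x,y)R^{T}(y)\,\omega^{n}\wedge\eta$ and assert that it is a well-defined potential with $\Delta_{B}u=R^{T}$, and you then record the equivalence $u(x_{0})\asymp\int_{1}^{\infty}s\,\bar R(s)\,ds$. But you never prove that either side is finite; you only set up the equivalence. Your third step then speaks of ``upgrading this integrability'' and of ``the finiteness from the previous step'', yet no finiteness has been established. The monotonicity inputs you mention do not supply it: the trace Li--Yau--Hamilton inequality along the Sasaki--Ricci flow gives $\partial_{t}(tR^{T})\ge 0$, which is a \emph{lower} bound (monotone increasing), not an upper bound, and in any case your short-time flow from Proposition~\ref{P1} lives only on $[0,T_{0})$. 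Running the linear heat equation on the fixed metric with initial data $R^{T}$ likewise gives no a priori upper bound on $t\,e^{t\Delta_{B}}R^{T}$ without some external input. As it stands, the argument is circular: the finiteness of the Green potential of $R^{T}$ is essentially the conclusion you are trying to prove.

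The paper breaks this circularity by producing the missing input from complex geometry rather than potential theory. Using Proposition~\ref{P1A} (existence of a nonconstant basic CR-holomorphic function of polynomial growth under maximal volume growth) together with a H\"ormander $L^{2}$-estimate, it constructs a nontrivial basic holomorphic section $s^{T}\in\mathcal{O}_{d}(M,K_{M}^{T})$. The CR Poincar\'e--Lelong formula then yields $\Delta_{B}\log\|s^{T}\|^{2}\ge R^{T}$, and, crucially, $\log\|s^{T}\|^{2}$ has \emph{a priori} logarithmic growth because $s^{T}$ has polynomial growth. One now solves the linear CR heat equation with initial data $2\log\|s^{T}\|$; the logarithmic growth of the initial data forces $v(x,t)\le C\log t$, and hence $w:=\partial_{t}v$ satisfies $tw(x,t)\le Cd$, while $w(x,t)\ge\int_{M}H(x,y,t)R^{T}(y)$ from the differential inequality. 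Comparing the heat-kernel average with the ball average via Gaussian bounds gives $\bar R(r)\le C/(1+r^{2})$. In short, the step you are missing is precisely the construction of a polynomial-growth holomorphic section of $K_{M}^{T}$; that is what furnishes the upper bound you need and is the essential use of the maximal volume growth hypothesis in this theorem.
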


Then Theorem \ref{T1} will be a consequence of Theorem \ref{T2A} and Theorem
\ref{T2B}.

The paper is organized as follows. In section $2$, we review some preliminary
notions in a Sasakian manifold. In section $3,$ we show the Li-Yau-Hamilton
estimates and the Harnack inequalities for the Sasaki-Ricci flow (%
\ref{2020C}
). In section $4,$ we get the
long-time solution existence of the Sasaki-Ricci flow
(\ref{2020C})
. Then we prove the Yau's uniformization conjecture for a complete noncompact
Sasakian $(2n+1)$-manifold with nonnegative transverse bisectional curvature
in section $5$. In the appendix, we give a sketch proof of the Proposition
\ref{P1}.

\section{Preliminary}

We will address the preliminary notions on the foliation normal coordinate and
Type II deformation in a Sasakian manifold. We refer to \cite{bg}, \cite{fow},
and references therein for some details.

Let $(M,g)$ be a Riemannian $(2n+1)$-manifold. $(M,g)$ is called Sasaki
if\ the metric cone
\[
(C(M),\overline{g},J):=(\mathbb{R}^{+}\times M\mathbf{,}dr^{2}+r^{2}g)
\]
such that $(C(M),\overline{g},J,\overline{\omega})$ is K\"{a}hler with
$\overline{\omega}=\frac{1}{2}\sqrt{-1}\partial\overline{\partial}r^{2}.$ The
function $\frac{1}{2}r^{2}$ is hence a global K\"{a}hler potential for the
cone metric. As $\{r=1\}=\{1\}\times M\subset C(M)$, we may define the Reeb
vector field $\xi$ on $M$
\[%
\begin{array}
[c]{c}%
\xi=J(\frac{\partial}{\partial r}).
\end{array}
\]
Also the contact $1$-form $\eta$ on $TM$ satisfies
\[
\eta(\cdot)=g(\xi,\cdot).
\]
Then $\xi$ is the Killing vector field with unit length such that $\eta
(\xi)=1\ $and$\ d\eta(\xi,X)=0.$ In fact, the tensor field $\Phi$ of type
$(1,1)$, defined by $\Phi(Y)=\nabla_{Y}\xi$, satisfies the condition%
\[
(\nabla_{X}\Phi)(Y)=g(\xi,Y)X-g(X,Y)\xi
\]
for any pair of vector fields $X$ and $Y$ on $M$. Then such a triple
$(\eta,\xi,\Phi)$ is called a Sasakian structure on a Sasakian manifold
$(M,g).$

Let $(M,\eta,\xi,\Phi,g)$ be a compact Sasakian $(2n+1)$-manifold and $F_{\xi
}$ be the Reeb foliation generated by $\xi$ with $g(\xi,\xi)=1$ and the
integral curves of $\xi$ are geodesics. For any point $p\in M$, we can
construct the local coordinates in a neighborhood of $p$ which are
simultaneously foliated and Riemann normal coordinates (\cite{gkn}). That is,
we can find Riemann normal coordinates $\{x,z^{1},z^{2},\cdots,z^{n}\}$ on a
neighborhood $U$ of $p$, such that $\frac{\partial}{\partial x}=\xi$ on $U$.
Let $(M,J,\theta,\xi,g_{\theta})$ be a complete Sasakian $(2n+1)$-manifold and
$F_{\xi}$ be the Reeb foliation generated by $\xi.$ Then $\xi-\sqrt{-1}%
J\xi=\xi+\sqrt{-1}r\frac{\partial}{\partial r}$ is a holomorphic vector field
on the cone $C(M)$ and there is an action of the holomorphic flow generated by
$\xi-\sqrt{-1}J\xi$ on the cone. The local orbits of this action defines a
transverse holomorphic structure on the Reeb foliation $F_{\xi}$. More
precisely, for an open covering $\{U_{\alpha}\}_{\alpha\in A}$ of the Sasakian
manifold with a submersion $\pi_{\alpha}:U_{\alpha}\rightarrow V_{\alpha
}\subset%
\mathbb{C}
^{n}$, there is a canonical isomorphism
\[
d\pi_{\alpha}:D_{p}\rightarrow T_{\pi_{\alpha}(p)}V_{\alpha}%
\]
for any $p\in U_{\alpha},$ where the contact subbundle $D=\ker\theta\subset
TM.$ $d\theta$ is the the Levi form on $D.$ It is a global two form on $M$.
Since $\xi$ generates isometries, the restriction of the Sasakian metric $g$
to $D$ gives a well-defined Hermitian metric $g_{\alpha}^{T}$ on $V_{\alpha}$
\[
g|_{D}\leftrightarrow g_{\alpha}^{T}\text{ \textrm{on} }V_{\alpha}.
\]
The transverse Hermitian metric on the Reeb foliation with transverse
holomorphic structure is denoted by $g_{i\overline{j}}^{T}$ and its associated
$\omega$ which is still called the transverse K\"{a}hler $(1,1)$-form. More
precisely, the foliation local coordinate $\{x,z^{1},z^{2},\cdots,z^{n}\}$ on
$U_{\alpha}\ $and $(D\otimes\mathbb{C})^{\left(  1,0\right)  }$ is spanned by
the form
\[%
\begin{array}
[c]{c}%
Z_{\alpha}=\frac{\partial}{\partial z^{\alpha}}-\eta(\frac{\partial}{\partial
z^{\alpha}})\frac{\partial}{\partial x},\ \mathrm{for}\ \alpha=1,2,\cdots,n.
\end{array}
\]
with
\[
\lbrack Z_{i},Z_{j}]=[\xi,Z_{j}]=0
\]
and its dual frame%
\[
\{\eta,dz^{j},d\overline{z}^{j}|\ j=1,2,\cdots,n\}
\]

The Levi-form $d\eta$ on $D$ and the corresponding transverse Hermitian metric
$g^{T}$ are to be
\[
g=g^{T}+\eta\otimes\eta
\]
and
\[%
\begin{array}
[c]{c}%
g_{i\overline{j}}^{T}=g^{T}(\frac{\partial}{\partial z^{i}},\frac{\partial
}{\partial\overline{z}^{j}})=d\eta(\frac{\partial}{\partial z^{i}},\Phi
\frac{\partial}{\partial\overline{z}^{j}}).
\end{array}
\]
In terms of the foliation normal coordinate, we have%
\[%
\begin{array}
[c]{c}%
g^{T}=g_{i\overline{j}}^{T}dz^{i}d\overline{z}^{j}\text{ \textrm{and} }%
\omega:=d\eta=\sqrt{-1}(g_{i\overline{j}}^{T})dz^{i}\wedge d\overline{z}^{j}.
\end{array}
\]
However, since $i(T)d\eta=0,$
\[%
\begin{array}
[c]{c}%
d\eta(Z_{\alpha},\overline{Z_{\beta}})=d\eta(\frac{\partial}{\partial
z^{\alpha}},\frac{\partial}{\overline{\partial}z^{\beta}}).
\end{array}
\]
This Hermitian metric in fact is K\"{a}hler. We often refer to $\omega:=d\eta$
as the K\"{a}hler form of the transverse K\"{a}hler metric $g^{T}$ in the leaf
space $D$.

In fact, for $\widetilde{X},\widetilde{Y},\widetilde{W},\widetilde{Z}\in
\Gamma(TD)$ and the $d\pi_{\alpha}$-corresponding $X,Y,W,Z\in\Gamma
(TV_{\alpha}).$ The Levi-Civita connection $\nabla_{X}^{T}$ with respect to
the transverse K\"{a}hler metric $g^{T}$ is defined%
\[%
\begin{array}
[c]{rcl}%
\nabla_{X}^{T}Y & := & d\pi_{\alpha}(\nabla_{\widetilde{X}}\widetilde{Y}),\\
\widetilde{\nabla_{X}^{T}Y} & := & \nabla_{\widetilde{X}}\widetilde{Y}%
+g(JX,Y)\xi.
\end{array}
\]
and
\[%
\begin{array}
[c]{rcl}%
Rm^{T}(X,Y,Z,W) & = & Rm_{D}(\widetilde{X},\widetilde{Y},\widetilde{Z}%
,\widetilde{W})+g(J\widetilde{Y},\widetilde{W})g(J\widetilde{X},\widetilde{Z}%
)\\
&  & -g(J\widetilde{X},\widetilde{W})g(J\widetilde{Y},\widetilde{Z}%
)-2g(J\widetilde{X},\widetilde{Y})g(J\widetilde{Z},\widetilde{W}),\\
Ric^{T}(X,Z) & = & Ric_{D}(\widetilde{X},\widetilde{Z})+2g(\widetilde{X}%
,\widetilde{Z}).
\end{array}
\]
The transverse Ricci curvature $Ric^{T}$ of the Levi-Civita connection
$\nabla^{T}$ associated to $g^{T}$ is
\[
Ric^{T}=Ric+2g^{T}\text{ \textrm{and} }R^{T}=R+2n
\]
on $D=\ker\eta$ with
\begin{equation}%
\begin{array}
[c]{c}%
R_{i\overline{j}}^{T}=-\frac{\partial^{2}}{\partial z^{i}\partial\overline
{z}^{j}}\log\det(g_{\alpha\overline{\beta}}^{T}).
\end{array}
\label{2025}%
\end{equation}

\begin{definition}
For $\Phi$-invariant planes $\sigma_{1},$ $\sigma_{2}$ in $D_{p}\subset
T_{p}M$ such that $\sigma_{1}=\left\langle X,\Phi X\right\rangle $ and
$\sigma_{2}=\left\langle Y,\Phi Y\right\rangle $ with two perpendicular unit
vectors $X,$ $Y$, set
\[%
\begin{array}
[c]{c}%
U=\frac{1}{\sqrt{2}}(X-\sqrt{-1}\Phi X)\text{ \textrm{and\ }}V=\frac{1}%
{\sqrt{2}}(Y-\sqrt{-1}\Phi Y).
\end{array}
\]
We define \ the transverse holomorphic bisectional curvature
\[
Rm^{T}(\sigma_{1},\sigma_{2}):=Rm^{T}(U,\overline{U},V,\overline
{V}):=\left\langle Rm^{T}(X,Y)Y,X\right\rangle +\left\langle Rm^{T}(X,\Phi
Y)\Phi Y,X\right\rangle .
\]

\end{definition}

\begin{remark}
For an orthonormal basis $e_{1},\cdots,e_{2n}$ of $D_{p}\subset T_{p}M$ such
that $e_{n+i}=\Phi e_{i}$, $i=1,\cdots,n$ with $u_{i}=\frac{1}{\sqrt{2}}%
(e_{i}-\sqrt{-1}\Phi e_{i})$, then $\{u_{i}\}$ is a unitary basis. It follows
that
\[%
\begin{array}
[c]{c}%
Ric^{T}(u_{i},\overline{u}_{i})=\sum_{j}^{n}Rm^{T}(e_{i},e_{j},e_{j}%
,e_{i})+\sum_{j}^{n}Rm^{T}(e_{i},e_{n+j},e_{n+j},e_{i})=Ric^{T}(e_{i},e_{i}).
\end{array}
\]

\end{remark}

\begin{definition}
Let $\left(  M,\xi,\eta,\Phi\right)  $ be a complete noncompact Sasakian
$(2n+1)$-manifold. A $p$-form $\gamma$ is called basic if%
\[
i(\xi)\gamma=0\text{ \textrm{and} }\mathcal{L}_{\xi}\gamma=0.
\]

\end{definition}

Let $\Lambda_{B}^{p}$ be the sheaf of germs of basic $p$-forms and $\Omega
_{B}^{p}$ be the set of all global sections of $\Lambda_{B}^{p}$. It is easy
to check that $d\gamma$ is basic if $\gamma$ is basic. Set $d_{B}%
=d|_{\Omega_{B}^{p}}.$ Then%
\[
d_{B}:\Omega_{B}^{p}\rightarrow\Omega_{B}^{p+1}%
\]
Define%
\[%
\begin{array}
[c]{c}%
d_{B}:=\partial_{B}+\overline{\partial}_{B},\text{ }d_{B}^{c}:=\frac{1}%
{2}\sqrt{-1}(\overline{\partial}_{B}-\partial_{B}),
\end{array}
\]
then%
\[
d_{B}d_{B}^{c}=\sqrt{-1}\partial_{B}\overline{\partial}_{B},\text{ }d_{B}%
^{2}=(d_{B}^{c})^{2}=0.
\]
The basic Laplacian is defined by
\[
\Delta_{B}=d_{B}d_{B}^{\ast}+d_{B}^{\ast}d_{B}.
\]
Then we have the basic de Rham complex $(\Omega_{B}^{\ast},d_{B})$ and the
basic Dolbeault complex $(\Omega_{B}^{p,\ast},\overline{\partial}_{B})$ and
its cohomology group $H_{B}^{\ast}(M,\mathbb{R})$.

By the so-called
Type II deformations of Sasakian structures
$(M,\Phi,\eta,\xi,g_{\theta})$ fixing the $\xi$ and varying $\eta:$ For
$\varphi\in\Omega_{B}^{0}$, define
\[%
\begin{array}
[c]{c}%
\widetilde{\eta}=\eta+d_{B}^{c}\varphi,
\end{array}
\]
then
\[%
\begin{array}
[c]{c}%
d\widetilde{\eta}=d\eta+d_{B}d_{B}^{c}\varphi=d\eta+\sqrt{-1}\partial
_{B}\overline{\partial}_{B}\varphi\text{ \textrm{and}\ }\widetilde{\omega
}=\omega+\sqrt{-1}\partial_{B}\overline{\partial}_{B}\varphi.
\end{array}
\]
The the transverse K\"{a}hler metric
\[%
\begin{array}
[c]{c}%
\widetilde{g}^{T}=(g_{i\overline{j}}^{T}+\varphi_{i\overline{j}}%
)dz^{i}d\overline{z}^{j}.
\end{array}
\]
We consider the Sasakian Ricci flow
\begin{equation}%
\begin{array}
[c]{c}%
\frac{d}{dt}g^{T}(x,t)=-Ric^{T}(x,t).
\end{array}
\label{9}%
\end{equation}
It is equivalent to consider the parabolic Monge-Amp\`{e}re equation for a
basic function $\varphi$%
\begin{equation}%
\begin{array}
[c]{c}%
\frac{d}{dt}\varphi=\log\det(g_{\alpha\overline{\beta}}^{T}+\varphi
_{\alpha\overline{\beta}})-\log\det(g_{\alpha\overline{\beta}}^{T})-f
\end{array}
\label{2020A}%
\end{equation}
with some basic function $f$.

As in the paper of \cite[Appendix A.2.]{chlw}, for a compact Sasakian
$(2n+1)$-manifold $(M,\eta,\xi,\Phi)$, we define $D=\ker\eta$ to be the
holomorphic contact vector bundle of $TM$ such that
\[
TM=D\oplus\left\langle \xi\right\rangle =T^{1,0}(M)\oplus T^{0,1}%
(M)\oplus\left\langle \xi\right\rangle .
\]
Then its associated strictly pseudoconvex CR $(2n+1)$-manifold to be denoted
by $(M,T^{1,0}(M),\xi,\Phi)$ with the basic canonical bundle $K_{M}^{T}$. One
can consider $(K_{M}^{T},h)$ as a basic transverse holomorphic line bundle
over a Sasakian manifold $(M,\eta,\xi,\Phi)$ with the basic Hermitian metric
$h$. Note that if $g^{T}$ is a transverse K\"{a}hler metric on $M,$ then
$h=\det(g^{T})$ defines a basic Hermitian metric on the canonical bundle
$K_{M}^{T}$. The inverse $(K_{M}^{T})^{-1}$ of $K_{M}^{T}$ is sometimes called
the anti-canonical bundle. Its basic first Chern class $c_{1}^{B}((K_{M}%
^{T})^{-1})$ is called the basic first Chern class of $M$ and is often denoted
by $c_{1}^{B}(M).$ Then the basic first Chern class $c_{1}^{B}(M)$ is defined
to be $[Ric^{T}(\omega)]_{B}$ for any transverse K\"{a}hler metric $\omega$ on
a Sasakian manifold $M$.

As in the previous discussion in mind, we will define the basic first
Bott-Chern class $c_{1}^{BBC}(M)$.

\begin{definition}
There exists a unique transverse Chern connection $\nabla^{TC}$ which is
compatible with the transverse holomorphic structure $\nabla^{TC}%
=\overline{\partial}_{B}$ and $h_{i\overline{j}}^{T}$ such that
\[%
\begin{array}
[c]{c}%
\Gamma_{ij}^{k}=(h^{T})^{k\overline{l}}\partial_{i}h_{j\overline{l}}^{T}.
\end{array}
\]
The torsion $T_{ij}^{k}$ of $h_{i\overline{j}}^{T}$ is defined as usual
\[%
\begin{array}
[c]{c}%
T_{ij}^{k}=\Gamma_{ij}^{k}-\Gamma_{ji}^{k}.
\end{array}
\]

\begin{enumerate}
\item The transverse Chern curvature of $h_{i\overline{j}}^{T}$ is defined by%
\[%
\begin{array}
[c]{c}%
R_{\text{ }i\overline{j}k}^{T\text{ \ \ }l}=-\partial_{\overline{j}}%
\Gamma_{ik}^{l}.
\end{array}
\]

\item The transverse Chern-Ricci curvature is defined by
\[%
\begin{array}
[c]{c}%
R_{i\overline{j}}^{TC}=-\frac{\partial_{2}}{\partial z^{i}\partial\overline
{z}^{j}}\log\det(h_{\alpha\overline{\beta}}^{T}).
\end{array}
\]
That is the transverse Chern-Rcci form of $\omega_{h}$ is defined by
\[%
\begin{array}
[c]{c}%
Ric^{TC}(\omega_{h})=-\sqrt{-1}\partial_{B}\overline{\partial}_{B}\log
\det(h_{i\overline{j}}^{T})=\sqrt{-1}R_{i\overline{j}}^{TC}dz^{i}\wedge
d\overline{z}^{j}.
\end{array}
\]
Note that if $h_{i\overline{j}}^{T}$ is non-K\"{a}hler, $R_{i\overline{j}%
}^{TC}$ may not equal to $(h^{T})^{k\overline{l}}R_{\text{ }k\overline
{l}i\overline{j}}^{T\text{ \ \ }}.$

\item The basic Bott-Chern cohomology determined by the closed form
$Ric^{TC}(\omega_{h})$ is denoted%
\[%
\begin{array}
[c]{c}%
H_{BBC}^{1,1}(M,\mathbb{R})=\frac{\{\text{\textrm{closed real} }%
\mathrm{(1,1)}\text{\textrm{-forms}}\}}{\{\sqrt{-1}\partial_{B}\overline
{\partial}_{B}\phi,\phi\in C_{B}^{\infty}(M,\mathbb{R)}\}}%
\end{array}
\]
and the basic first Bott-Chern class is denoted by
\[
c_{1}^{BBC}(M)=c_{1}^{BBC}((K_{M}^{T})^{-1})=[Ric^{T}(\omega_{h})]_{BC}.
\]

\end{enumerate}
\end{definition}

In the following, we define the so-called Sasaki-Chern-Ricci flow for
transverse Hermitian metric $h_{i\overline{j}}^{T}$ on the normal bundle
$v(F_{\xi})$ with this transverse holomorphic structure with its associated
transverse K\"{a}hler $(1,1)$-form $\omega_{h}$.

The Chern-Ricci flow was first studied by Gill \cite{g} and then by
Tosatti-Weinkove \cite{tw}. Now we consider the Sasaki-Chern-Ricci flow on
$M\times\lbrack0,T)$ as
\begin{equation}%
\begin{array}
[c]{lll}%
\frac{d}{dt}\omega & = & -Ric^{TC}(\omega),\\
\omega(0) & = & \omega_{0}.
\end{array}
\label{1}%
\end{equation}
As long as the solution exists, the cohomology class $[\omega(t)]_{BC}$
evolves by%
\[%
\begin{array}
[c]{c}%
\frac{\partial}{\partial t}\left[  \omega(t)\right]  _{BC}=-c_{1}%
^{BBC}(M),\text{ }\left[  \omega(0)\right]  _{BBC}=\left[  \omega_{0}\right]
_{BBC},
\end{array}
\]
and solving this ordinary differential equation gives%
\[%
\begin{array}
[c]{c}%
\left[  \omega(t)\right]  _{BBC}=\left[  \omega_{0}\right]  _{BBC}%
-tc_{1}^{BBC}(M).
\end{array}
\]
We see that a necessary condition for the Sasaki-Chern-Ricci flow to exist for
$t>0$ such that%
\[%
\begin{array}
[c]{c}%
\left[  \omega_{0}\right]  _{BBC}-tc_{1}^{BBC}(M)>0.
\end{array}
\]
That is
\[%
\begin{array}
[c]{c}%
\omega(t)=\omega_{0}-tRic^{TC}(\omega_{0})+\sqrt{-1}\partial_{B}%
\overline{\partial}_{B}\varphi(t)>0.
\end{array}
\]
This necessary condition is sufficient. In fact we define
\[%
\begin{array}
[c]{c}%
T:=\sup\{t>0|\text{ }\exists\varphi\in C_{B}^{\infty}\text{ \textrm{such that}
}\omega_{0}-tRic^{TC}(\omega_{0})+\sqrt{-1}\partial_{B}\overline{\partial}%
_{B}\varphi(t)>0\}.
\end{array}
\]
As in the paper of Chang-Han-Lin-Wu \cite{chlw} and Tosatti-Weinkove
\cite{tw}, we have

\begin{proposition}
\label{P21}There exists a unique maximal solution $\omega(t)$ of the
Sasaki-Chern-Ricci flow (\ref{1}) on $M\times\lbrack0,T)$ for $t\in
\lbrack0,T)$.
\end{proposition}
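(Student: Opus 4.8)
The plan is to convert the tensorial flow (\ref{1}) into a single scalar parabolic complex Monge-Amp\`ere equation for a basic potential and then invoke the standard transverse parabolic theory, following Tosatti-Weinkove \cite{tw} and Chang-Han-Lin-Wu \cite{chlw}. First I would fix the reference family of transverse $(1,1)$-forms
\[
\widehat{\omega}(t)=\omega_{0}-t\,Ric^{TC}(\omega_{0}),
\]
which represents the correct basic Bott-Chern class $[\omega_{0}]_{BBC}-tc_{1}^{BBC}(M)$ for every $t$ and is positive precisely on the range singled out by the definition of $T$. Writing a candidate solution as $\omega(t)=\widehat{\omega}(t)+\sqrt{-1}\partial_{B}\overline{\partial}_{B}\varphi(t)$ and using $-Ric^{TC}(\omega)=\sqrt{-1}\partial_{B}\overline{\partial}_{B}\log\det(g^{T})$, the flow $\frac{d}{dt}\omega=-Ric^{TC}(\omega)$ becomes, after applying the basic $\partial_{B}\overline{\partial}_{B}$-lemma to pass from closed forms to a scalar potential,
\[
\frac{\partial\varphi}{\partial t}=\log\frac{(\widehat{\omega}(t)+\sqrt{-1}\partial_{B}\overline{\partial}_{B}\varphi)^{n}\wedge\eta}{\omega_{0}^{n}\wedge\eta},\qquad\varphi(0)=0,
\]
for a basic function $\varphi\in C_{B}^{\infty}$. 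This is the transverse analogue of the parabolic Monge-Amp\`ere equation of the Chern-Ricci flow.

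Next I would establish short-time existence and uniqueness. Linearizing the right-hand side at $\varphi$ produces the transverse Laplacian of the evolving metric $\omega(t)$, which is uniformly elliptic as long as $\omega(t)>0$; hence the equation is uniformly parabolic near $t=0$. Because the Reeb field $\xi$ is fixed and the deformation is of Type II, so that $\varphi$ remains basic and the equation descends to each transverse chart $V_{\alpha}\subset\mathbb{C}^{n}$, foliated parabolic theory yields a unique basic solution on a short time interval. Uniqueness on the whole existence interval then follows from the parabolic comparison principle applied to the difference of two solutions.

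I would extend the solution to the full interval $[0,T)$ by deriving the a priori estimates exactly as in \cite{chlw} and \cite{tw}: a zeroth-order bound on $\varphi$ and $\dot{\varphi}$ coming from the positivity of $\widehat{\omega}(t)$ for $t<T$, a second-order Aubin-Yau type estimate bounding $\mathrm{tr}_{\omega_{0}}\omega(t)$, and then the Evans-Krylov estimate together with Schauder bootstrapping to obtain uniform $C^{\infty}$ bounds on every compact subinterval $[0,T']\subset[0,T)$. These produce a smooth solution on $[0,T)$. Maximality is immediate from the definition of $T$: if the solution extended to some $t_{0}\ge T$, then the class $[\omega_{0}]_{BBC}-t_{0}c_{1}^{BBC}(M)$ would contain the positive form $\omega(t_{0})$, contradicting the choice of $T$ as the supremum of times admitting such a positive representative; conversely the a priori estimates force the maximal existence time to be at least $T$, so it equals $T$.

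The main obstacle I anticipate is the faithful transfer of the Hermitian, generally non-K\"ahler, Monge-Amp\`ere estimates to the basic transverse category: one must verify the basic $\partial_{B}\overline{\partial}_{B}$-lemma used in the reduction, confirm that $Ric^{TC}(\omega_{0})$ genuinely represents $c_{1}^{BBC}(M)$, and ensure that the second-order and Evans-Krylov estimates, originally local on a K\"ahler manifold, patch consistently along the Reeb foliation so that every quantity stays basic throughout the flow. Once this transverse parabolic machinery is in place, the argument parallels the compact Chern-Ricci flow of \cite{tw} essentially verbatim.
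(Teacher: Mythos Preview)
Your proposal is correct and follows essentially the same route as the paper: the paper does not give a self-contained proof of Proposition~\ref{P21} but simply invokes \cite{chlw} and \cite{tw}, and immediately after the statement it records precisely the reduction you describe, namely the rewriting of the flow as the parabolic Monge--Amp\`ere equation (\ref{2025C}) for a basic potential $\varphi$ with reference form $\omega_{0}-tRic^{TC}(\omega_{0})$. Your outline of short-time existence via transverse parabolic theory, the a priori estimates (oscillation of $\varphi$, trace estimate, Evans--Krylov, Schauder), and the identification of the maximal time with $T$ is exactly the content of those references transported to the basic setting, so there is nothing to add.
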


That is, the Sasaki-Chern-Ricci flow can be rewritten as
\[%
\begin{array}
[c]{c}%
\left\{
\begin{array}
[c]{ccl}%
\frac{\partial}{\partial t}\omega & = & -Ric^{TC}(\omega_{0})+\sqrt
{-1}\partial_{B}\overline{\partial}_{B}\varphi(t),\\
\varphi(t) & = & \log\frac{\det h^{T}}{\det h_{0}^{T}},
\end{array}
\right.
\end{array}
\]
which is equivalent to the following parabolic Monge-Amp\`{e}re equation on
$[0,T)$
\begin{equation}%
\begin{array}
[c]{c}%
\left\{
\begin{array}
[c]{ccl}%
\frac{\partial}{\partial t}\varphi & = & \log\frac{(\omega_{0}-tRic^{TC}%
(\omega_{0})+\sqrt{-1}\partial_{B}\overline{\partial}_{B}\varphi)^{n}%
\wedge\eta_{0}}{\omega_{0}^{n}\wedge\eta_{0}};\\
\varphi(0) & = & 0.
\end{array}
\right.
\end{array}
\label{2025C}%
\end{equation}
with
\[%
\begin{array}
[c]{c}%
\varphi(x,t)=\int_{0}^{t}\log\frac{(\omega(x,s)^{n}}{\omega_{0}^{n}(x)}ds.
\end{array}
\]

\section{The Harnack Inequality}

In this section, we study the Li-Yau-Hamilton estimates and the Harnack
inequalities for Sasaki-Ricci flow
\begin{equation}%
\begin{array}
[c]{c}%
\frac{d}{dt}g_{\alpha\overline{\beta}}^{T}(x,t)=-R_{\alpha\overline{\beta}%
}^{T}(x,t)
\end{array}
\label{0}%
\end{equation}
on a Sasakian $(2n+1)$-manifold of bounded curvature and nonnegative
transverse holomorphic bisectional curvature.

\begin{definition}
(\cite{fow}) A complex vector field $X$ on a Sasakian manifold is called a
Hamilton holomorphic vector field if

\begin{enumerate}
\item $d\pi_{\alpha}(X)$ is a holomorphic vector field on $V_{\alpha},$

\item the complex valued Hamiltonian function $u_{X}\equiv\sqrt{-1}\eta(X)$
satisfies%
\[%
\begin{array}
[c]{c}%
\overline{\partial}_{B}u_{X}=-\frac{1}{2}d\eta(X,\cdot).
\end{array}
\]

\end{enumerate}
\end{definition}

If $(x,z^{1},\cdots,z^{n})$ is a foliation chart on a open cover $U_{\alpha},$
then $X$ can be written as%
\[%
\begin{array}
[c]{c}%
X=\eta(X)\frac{\partial}{\partial x}+\sum X^{\beta}\frac{\partial}{\partial
z^{\beta}}-\eta(\sum X^{\beta}\frac{\partial}{\partial z^{\beta}}%
)\frac{\partial}{\partial x},
\end{array}
\]
where $\frac{\partial}{\partial x}$ is the characteristic vector field and
$X^{\beta}=(g^{T})^{\beta\overline{\gamma}}\frac{\partial u}{\partial
\overline{z}^{\gamma}}$ are local holomorphic basic functions.

To formulate the Li-Yau-Hamilton quadric to the Sasaki-Ricci flow. Consider a
homothetically expanding transverse gradient Sasaki-Ricci soliton $g^{T}$
(\cite{cll}) satisfying%
\begin{equation}%
\begin{array}
[c]{c}%
R_{\alpha\overline{\beta}}^{T}+\frac{1}{t}g_{\alpha\overline{\beta}}%
^{T}=\nabla_{\alpha}^{T}X_{\overline{\beta}},\text{ }\nabla_{\overline{\alpha
}}^{T}X_{\overline{\beta}}=0
\end{array}
\label{01}%
\end{equation}
with $X^{\alpha}=(g^{T})^{\alpha\overline{\beta}}\frac{\partial u}%
{\partial\overline{z}^{\beta}}$ for some real-valued smooth holomorphic basic
function $u.$ Differentiating (\ref{01}) and commuting%
\[%
\begin{array}
[c]{c}%
\nabla_{\gamma}^{T}R_{\alpha\overline{\beta}}^{T}=\nabla_{\gamma}^{T}%
\nabla_{\alpha}^{T}X_{\overline{\beta}}=\nabla_{\gamma}^{T}\nabla
_{\overline{\beta}}^{T}X_{\alpha}=\nabla_{\gamma}^{T}\nabla_{\overline{\beta}%
}^{T}X_{\alpha}-\nabla_{\overline{\beta}}^{T}\nabla_{\gamma}^{T}X_{\alpha
}=-R_{\gamma\overline{\beta}\alpha\overline{\delta}}^{T}X^{\overline{\delta}},
\end{array}
\]
or%
\begin{equation}%
\begin{array}
[c]{c}%
\nabla_{\gamma}^{T}R_{\alpha\overline{\beta}}^{T}+R_{\alpha\overline{\beta
}\gamma\overline{\delta}}^{T}X^{\overline{\delta}}=0,
\end{array}
\label{02}%
\end{equation}
and%
\begin{equation}%
\begin{array}
[c]{c}%
\nabla_{\gamma}^{T}R_{\alpha\overline{\beta}}^{T}X^{\gamma}+R_{\alpha
\overline{\beta}\gamma\overline{\delta}}^{T}X^{\gamma}X^{\overline{\delta}}=0.
\end{array}
\label{03}%
\end{equation}
Differentiating (\ref{02}) again and using the first equation in (\ref{01}),
we get%
\begin{equation}%
\begin{array}
[c]{c}%
\nabla_{\overline{\sigma}}^{T}\nabla_{\gamma}^{T}R_{\alpha\overline{\beta}%
}^{T}+\nabla_{\overline{\sigma}}^{T}R_{\alpha\overline{\beta}\gamma
\overline{\delta}}^{T}X^{\overline{\sigma}}+R_{\alpha\overline{\beta}%
\gamma\overline{\epsilon}}^{T}R_{\epsilon\overline{\delta}}^{T}+\frac{1}%
{t}R_{\alpha\overline{\beta}\gamma\overline{\delta}}^{T}=0.
\end{array}
\label{04}%
\end{equation}
Taking the trace in (\ref{04}),
\begin{equation}%
\begin{array}
[c]{c}%
\Delta_{B}R_{\alpha\overline{\beta}}^{T}+\nabla_{\overline{\gamma}}%
^{T}R_{\alpha\overline{\beta}}^{T}X^{\overline{\gamma}}+R_{\alpha
\overline{\beta}\gamma\overline{\delta}}^{T}R_{\delta\overline{\gamma}}%
^{T}+\frac{1}{t}R_{\alpha\overline{\beta}}^{T}=0.
\end{array}
\label{05}%
\end{equation}
Symmetrizing by adding (\ref{02}) to (\ref{05}), we obtain%
\[%
\begin{array}
[c]{c}%
\Delta_{B}R_{\alpha\overline{\beta}}^{T}+\nabla_{\gamma}^{T}R_{\alpha
\overline{\beta}}^{T}X^{\gamma}+\nabla_{\overline{\gamma}}^{T}R_{\alpha
\overline{\beta}}^{T}X^{\overline{\gamma}}+R_{\alpha\overline{\beta}%
\gamma\overline{\delta}}^{T}R_{\delta\overline{\gamma}}^{T}+R_{\alpha
\overline{\beta}\gamma\overline{\delta}}^{T}X^{\gamma}X^{\overline{\delta}%
}+\frac{1}{t}R_{\alpha\overline{\beta}}^{T}=0.
\end{array}
\]

Next we consider the following Hamilton Harnack quantity
\[%
\begin{array}
[c]{c}%
Z_{\alpha\overline{\beta}}^{T}:=\frac{\partial}{\partial t}R_{\alpha
\overline{\beta}}^{T}+\nabla_{\gamma}^{T}R_{\alpha\overline{\beta}}%
^{T}X^{\gamma}+\nabla_{\overline{\gamma}}^{T}R_{\alpha\overline{\beta}}%
^{T}X^{\overline{\gamma}}+R_{\alpha\overline{\gamma}}^{T}R_{\gamma
\overline{\beta}}^{T}+R_{\alpha\overline{\beta}\gamma\overline{\delta}}%
^{T}X^{\gamma}X^{\overline{\delta}}+\frac{1}{t}R_{\alpha\overline{\beta}}^{T}.
\end{array}
\]
Now we can state the Li-Yau-Hamilton estimates and the Harnack inequalities
for the Sasaki-Ricci flow on a Sasakian $(2n+1)$-manifold with nonnegative
transverse holomorphic bisectional curvature.

\begin{theorem}
\label{Thm LYH}Let $g_{\alpha\overline{\beta}}^{T}(t),$ $t\in\lbrack0,T)$, be
a complete solution to the Sasaki-Ricci flow (\ref{0}) on a Sasakian
$(2n+1)$-manifold $M$ with bounded curvature and nonnegative transverse
holomorphic bisectional curvature. For any holomorphic Legendre tangent vector
$X$, let
\begin{equation}%
\begin{array}
[c]{c}%
Z_{\alpha\overline{\beta}}^{T}=\frac{\partial}{\partial t}R_{\alpha
\overline{\beta}}^{T}+\nabla_{\gamma}^{T}R_{\alpha\overline{\beta}}%
^{T}X^{\gamma}+\nabla_{\overline{\gamma}}^{T}R_{\alpha\overline{\beta}}%
^{T}X^{\overline{\gamma}}+R_{\alpha\overline{\gamma}}^{T}R_{\gamma
\overline{\beta}}^{T}+R_{\alpha\overline{\beta}\gamma\overline{\delta}}%
^{T}X^{\gamma}X^{\overline{\delta}}+\frac{1}{t}R_{\alpha\overline{\beta}}^{T}.
\end{array}
\label{07}%
\end{equation}
Then we have
\[%
\begin{array}
[c]{c}%
Z_{\alpha\overline{\beta}}^{T}W^{\alpha}W^{\overline{\beta}}\geq0
\end{array}
\]
for all holomorphic Legendre tangent vector $W$ and time $t\in(0,T).$
\end{theorem}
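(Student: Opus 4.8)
The plan is to establish the Li-Yau-Hamilton (LYH) inequality $Z_{\alpha\overline{\beta}}^T W^\alpha W^{\overline{\beta}}\geq 0$ by adapting Hamilton's and Cao's tensor-maximum-principle technique from the Kähler-Ricci flow to the transverse Kähler setting of the Sasaki-Ricci flow. Since $Z_{\alpha\overline{\beta}}^T$ is a basic tensor (invariant under the Reeb flow and annihilated by $i(\xi)$), every computation descends to the transverse Kähler quotient geometry, where the Levi-Civita connection $\nabla^T$, the curvature $Rm^T$, and the transverse Laplacian $\Delta_B$ behave exactly as in the Kähler case. The soliton computation already carried out in the excerpt — equations (01) through (05) — shows that on a homothetically expanding transverse gradient soliton one has $Z_{\alpha\overline{\beta}}^T=0$ identically; this is the model case that motivates the form of the Harnack quantity and tells us precisely which evolution equation $Z_{\alpha\overline{\beta}}^T$ should satisfy.

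\medskip

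\noindent First I would derive the evolution equation for $Z_{\alpha\overline{\beta}}^T$ under the flow (\ref{0}). Using the standard variation formulas $\frac{\partial}{\partial t}R_{\alpha\overline{\beta}}^T=\Delta_B R_{\alpha\overline{\beta}}^T+R_{\alpha\overline{\beta}\gamma\overline{\delta}}^T R_{\delta\overline{\gamma}}^T-R_{\alpha\overline{\gamma}}^T R_{\gamma\overline{\beta}}^T$ together with the transverse second Bianchi identity, I would show that $Z_{\alpha\overline{\beta}}^T$ satisfies a heat-type equation of Lichnerowicz form,
\[
\Bigl(\frac{\partial}{\partial t}-\Delta_B\Bigr)Z_{\alpha\overline{\beta}}^T=(\text{reaction terms quadratic in }Z)+(\text{curvature terms})-\frac{1}{t}Z_{\alpha\overline{\beta}}^T+\cdots,
\]
where the zeroth-order terms are manufactured to be nonnegative when contracted against $W^\alpha W^{\overline{\beta}}$ \emph{provided} the transverse bisectional curvature is nonnegative. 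The vector field $X$ (a Hamilton holomorphic Legendre vector field) is treated as a free parameter that one optimizes over; replacing $X$ by $X+V$ for arbitrary $V$ produces the extra gradient and $Rm^T\!\cdot X X$ terms exactly so that the maximum-principle argument closes.

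\medskip

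\noindent The core of the argument is then a tensor maximum principle on the noncompact manifold. Because $M$ is complete noncompact, I cannot simply invoke the compact maximum principle; instead I would use that the solution has \emph{bounded} curvature on $[0,T)$ (a standing hypothesis of the theorem) and apply Shi's cutoff/derivative estimates to justify Hamilton's maximum principle for tensors on complete manifolds with bounded curvature. The diagonalization step is standard: at a first would-be zero eigenvalue of $Z_{\alpha\overline{\beta}}^T$ in the direction $W$, one checks that the reaction terms are nonnegative — here the crucial input is that nonnegativity of the transverse bisectional curvature is \emph{preserved} under the flow (so $Rm^T_{\alpha\overline{\beta}\gamma\overline{\delta}}\geq 0$ in the appropriate sense for all $t$), which follows from Mok's/Bando's preservation theorem transplanted to the transverse setting. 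The $\frac{1}{t}$ term is handled by the usual device of studying $tZ_{\alpha\overline{\beta}}^T$ and noting it vanishes like $t\,R_{\alpha\overline{\beta}}^T$ as $t\to 0^+$, giving the correct nonnegative initial condition.

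\medskip

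\noindent \textbf{The main obstacle} I anticipate is twofold. First, verifying that all the commutator and Bianchi manipulations producing the reaction terms go through verbatim in the transverse Kähler geometry: one must confirm that the extra torsion-type correction terms coming from the Sasakian structure (the $g(JX,Y)\xi$ pieces in the relation between $\nabla^T$ and the ambient $\nabla$, recorded in the Preliminary section) do not contaminate the basic computation — this is where the basic-tensor hypothesis $\mathcal{L}_\xi=0$, $i(\xi)=0$ does the real work, ensuring everything reduces cleanly to the quotient. Second, the noncompact maximum principle requires careful control: establishing that $Z_{\alpha\overline{\beta}}^T$ (and its relevant derivatives) are bounded and that the reaction inequality is genuinely satisfied at an interior space-time minimum demands the bounded-curvature assumption in an essential way, and the cutoff functions must be chosen compatibly with the horizontal/Reeb splitting of $TM$.
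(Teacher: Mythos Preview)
Your proposal is correct and follows essentially the same route as the paper: both reduce to Cao's K\"ahler--Ricci Harnack computation carried out in the transverse geometry, combined with Hamilton's perturbation argument to push the tensor maximum principle through on a complete noncompact manifold with bounded curvature. The only notable difference in execution is that the paper handles noncompactness by adding an explicit barrier $\phi(x,t)=\epsilon e^{At}u(x)$ (with $u$ a basic exhaustion function supplied by Shi, $|\Delta_B u|\le C$) to $Z$ and argues by contradiction at the first zero of $\widetilde Z=Z+\phi$, whereas you describe this step more abstractly as ``Shi's cutoff/derivative estimates''; and the paper extends $X,W$ near the first zero with the specific prescriptions $\nabla^T_{\overline\delta}X_\gamma=R^T_{\gamma\overline\delta}+\tfrac1t g^T_{\gamma\overline\delta}$, $\nabla^T W=0$ rather than phrasing it as an optimization over $X$. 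These are cosmetic differences --- your plan would produce the same proof.
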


Theorem \ref{Thm LYH} works on the transverse K\"{a}hler structure. Then the
proof follows from essentially the same computation as in Cao \cite{c1} plus a
perturbation argument of Hamilton for strong \ maximum principle as in
\cite{h3}. We also refer to some related calculations as in \cite{he} and
\cite{cftw}.

\begin{proof}
Consider
\[%
\begin{array}
[c]{lll}%
Z=Z_{\alpha\overline{\beta}}^{T}W^{\alpha}W^{\overline{\beta}} & = &
\{\Delta_{B}R_{\alpha\overline{\beta}}^{T}+\nabla_{\gamma}^{T}R_{\alpha
\overline{\beta}}^{T}X^{\gamma}+\nabla_{\overline{\gamma}}^{T}R_{\alpha
\overline{\beta}}^{T}X^{\overline{\gamma}}\\
&  & \text{ }+R_{\alpha\overline{\beta}\gamma\overline{\delta}}^{T}%
R_{\delta\overline{\gamma}}^{T}+R_{\alpha\overline{\beta}\gamma\overline
{\delta}}^{T}X^{\gamma}X^{\overline{\delta}}+\frac{1}{t}R_{\alpha
\overline{\beta}}^{T}\}W^{\alpha}W^{\overline{\beta}}.
\end{array}
\]
Since the transverse holomorphic bisectional curvature is nonnegative it
implies that when $t$ is small $Z$ is nonnegative for any $x\in M$ and all
Legendre tangent vectors $X,$ $W\in T_{x}M$ with $W\neq0.$ We claim that this
remains true for all $t>0.$ In order to show this, we use the perturbation
argument as in \cite{h3}. From the work of Shi \cite{s1}, we can pick a basic
function $u(x)$ on $M$ so that $u(x)>1$, $u(x)\rightarrow\infty$ as $x$
$\rightarrow\infty$ and $|\Delta_{B}u|<C$ for a constant $C$. Then we put%
\[
\phi(x,t)=\epsilon e^{At}u(x),
\]
where $\epsilon>0$, $A>C$ are constants. Thus%
\begin{equation}%
\begin{array}
[c]{c}%
(\frac{\partial}{\partial t}-\Delta_{B})\phi=\epsilon e^{At}(Au-\Delta
_{B}u)>0.
\end{array}
\label{b}%
\end{equation}

Now we modify $Z$ by setting $\widetilde{Z}=Z+\phi$, then $\widetilde{Z}$ is
strictly positive everywhere at $t=0$. We claim that $\widetilde{Z}$ is
strictly positive everywhere for $t>0.$ Suppose this is not true, then there
is a first time $t_{0}>0$ such that $\widetilde{Z}$ vanishes for the first
time for some $X_{0}$ and $W_{0}\neq0$ in the Legendre tangent space of
$T_{x_{0}}M$ at some point $x_{0}\in M.$ Extend $X_{0}$ and $W_{0}$ to local
Legendre vector fields $X$ and $W$ in a neighborhood of $(x_{0},t_{0}),$ with%
\begin{equation}%
\begin{array}
[c]{c}%
X_{\gamma,\overline{\delta}}=X_{\overline{\delta},\gamma}=R_{\gamma
\overline{\delta}}^{T}+\frac{1}{t}g_{\alpha\overline{\beta}}^{T},\text{
}X_{\gamma,\delta}=X_{\overline{\delta},\overline{\gamma}}=0,\text{ }\\
(\frac{\partial}{\partial t}-\Delta_{B})X_{\gamma}=\frac{1}{2}R_{\gamma
\overline{\delta}}^{T}V_{\delta}-\frac{1}{t}X_{\gamma}%
\end{array}
\label{c}%
\end{equation}
and
\begin{equation}%
\begin{array}
[c]{c}%
W_{\overline{\alpha},\gamma}=W_{\overline{\alpha},\overline{\gamma}}=0,\text{
}(\frac{\partial}{\partial t}-\Delta_{B})W_{\overline{\alpha}}=0.
\end{array}
\label{d}%
\end{equation}
Following the same computation as \cite[Lemma 2.5]{c2}, the expression
$Z_{\alpha\overline{\beta}}^{T}$ satisfies the evolution equation%
\begin{equation}%
\begin{array}
[c]{c}%
(\frac{\partial}{\partial t}-\Delta_{B})Z_{\alpha\overline{\beta}}%
^{T}=R_{\alpha\overline{\beta}\gamma\overline{\delta}}^{T}Z_{\gamma
\overline{\delta}}^{T}+P_{\alpha\overline{\gamma}\delta}^{T}P_{\gamma
\overline{\beta}\overline{\delta}}^{T}-P_{\alpha\overline{\gamma}%
\overline{\delta}}^{T}P_{\gamma\overline{\beta}\delta}^{T}-\frac{1}%
{2}(R_{\alpha\overline{\gamma}}^{T}Z_{\gamma\overline{\beta}}^{T}%
+Z_{\alpha\overline{\gamma}}^{T}R_{\gamma\overline{\beta}}^{T})-\frac{2}%
{t}Z_{\alpha\overline{\beta}}^{T}%
\end{array}
\label{a}%
\end{equation}
at $(x_{0},t_{0}),$ where
\[%
\begin{array}
[c]{c}%
P_{\alpha\overline{\beta}\gamma}^{T}=\nabla_{\gamma}^{T}R_{\alpha
\overline{\beta}}^{T}+R_{\alpha\overline{\beta}\gamma\overline{\delta}}%
^{T}X^{\overline{\delta}}\text{ }\mathrm{and}\text{ }P_{\alpha\overline{\beta
}\overline{\delta}}^{T}=\nabla_{\overline{\delta}}^{T}R_{\alpha\overline
{\beta}}^{T}+R_{\alpha\overline{\beta}\gamma\overline{\delta}}^{T}X^{\gamma}.
\end{array}
\]
By applying (\ref{b}), (\ref{d}) and using (\ref{a}), at $(x_{0},t_{0}),$ we
obtain
\begin{equation}%
\begin{array}
[c]{ll}
& (\frac{\partial}{\partial t}-\Delta_{B})\widetilde{Z}\\
= & \{(\frac{\partial}{\partial t}-\Delta_{B})Z_{\alpha\overline{\beta}}%
^{T}\}W^{\alpha}W^{\overline{\beta}}+Z_{\alpha\overline{\beta}}^{T}W^{\alpha
}(\frac{\partial}{\partial t}-\Delta_{B})W^{\overline{\beta}}+Z_{\alpha
\overline{\beta}}^{T}W^{\overline{\beta}}(\frac{\partial}{\partial t}%
-\Delta_{B})W^{\alpha}\\
& -\nabla_{\gamma}^{T}Z_{\alpha\overline{\beta}}^{T}\nabla_{\overline{\gamma}%
}^{T}(W^{\alpha}W^{\overline{\beta}})-\nabla_{\overline{\gamma}}^{T}%
Z_{\alpha\overline{\beta}}^{T}\nabla_{\gamma}^{T}(W^{\alpha}W^{\overline
{\beta}})+(\frac{\partial}{\partial t}-\Delta_{B})\phi\\
= & R_{\alpha\overline{\beta}\gamma\overline{\delta}}^{T}Z_{\gamma
\overline{\delta}}^{T}W^{\alpha}W^{\overline{\beta}}+P_{\alpha\overline
{\gamma}\delta}^{T}W^{\alpha}P_{\gamma\overline{\beta}\overline{\delta}}%
^{T}W^{\overline{\beta}}-P_{\alpha\overline{\gamma}\overline{\delta}}%
^{T}W^{\alpha}P_{\gamma\overline{\beta}\delta}^{T}W^{\overline{\beta}}\\
& -\frac{1}{2}(R_{\alpha\overline{\gamma}}^{T}Z_{\gamma\overline{\beta}}%
^{T}+Z_{\alpha\overline{\gamma}}^{T}R_{\gamma\overline{\beta}}^{T})W^{\alpha
}W^{\overline{\beta}}-\frac{2}{t}\widetilde{Z}+\frac{2}{t}\phi+\epsilon
e^{At}(Au-\Delta_{B}u).
\end{array}
\label{e}%
\end{equation}
Since $W$ is a zero Legendre eigenvector of $Z_{\alpha\overline{\beta}}^{T}$
at $(x_{0},t_{0}),$ the first two terms in the last line of the above equality
disappear. Set
\[%
\begin{array}
[c]{c}%
M_{\gamma\overline{\delta}}^{T}=P_{\gamma\overline{\delta}\alpha}^{T}%
W^{\alpha}=(\nabla_{\gamma}^{T}R_{\alpha\overline{\delta}}^{T}+R_{\alpha
\overline{\beta}\gamma\overline{\delta}}^{T}X^{\overline{\beta}})W^{\alpha}%
\end{array}
\]
and%
\[%
\begin{array}
[c]{c}%
M_{\gamma\delta}^{T}=P_{\gamma\overline{\beta}\delta}^{T}W^{\overline{\beta}%
}=(\nabla_{\delta}^{T}R_{\gamma\overline{\beta}}^{T}+R_{\alpha\overline{\beta
}\gamma\overline{\delta}}^{T}X^{\alpha})W^{\overline{\beta}}.
\end{array}
\]
It can be showed as \cite[Proposition 4.1]{c1} that, at $(x_{0},t_{0}),$%
\[%
\begin{array}
[c]{c}%
R_{\alpha\overline{\beta}\gamma\overline{\delta}}^{T}Z_{\gamma\overline
{\delta}}^{T}W^{\alpha}W^{\overline{\beta}}+|M_{\gamma\overline{\delta}}%
^{T}|^{2}-|M_{\gamma\delta}^{T}|^{2}\geq0.
\end{array}
\]
This would imply form (\ref{e}) that, at $(x_{0},t_{0}),$
\[%
\begin{array}
[c]{l}%
(\frac{\partial}{\partial t}-\Delta_{B})\widetilde{Z}\geq\frac{2}{t_{0}}%
\phi+\epsilon e^{At_{0}}(Au-\Delta_{B}u)>0
\end{array}
\]
which contradicts the fact that $\widetilde{Z}$ achieves a minimum at
$(x_{0},t_{0})$ by the strong maximum principle. Therefore $\widetilde{Z}$ is
strictly positive everywhere for $t>0$ and it implies that $Z\geq0$ for
all$\ t>0$ by letting $\epsilon\rightarrow0$. This proves Theorem
\ref{Thm LYH}.
\end{proof}

Taking the trace in Theorem \ref{Thm LYH}, we have the following Harnack
estimate for the transverse scalar curvature $R^{T}$.

\begin{corollary}
\label{Cor}Under the assumption of Theorem \ref{Thm LYH}, the transverse
scalar curvature $R^{T}$ satisfies the estimate
\begin{equation}%
\begin{array}
[c]{c}%
\frac{\partial R^{T}}{\partial t}+\nabla_{\alpha}^{T}R^{T}X^{\alpha}%
+\nabla_{\overline{\alpha}}^{T}R^{T}X^{\overline{\alpha}}+R_{\alpha
\overline{\beta}}^{T}X^{\alpha}X^{\overline{\beta}}+\frac{R^{T}}{t}\geq0
\end{array}
\label{08}%
\end{equation}
for all holomorphic Legendre tangent vector $X$ and $t>0.$ In particular,%
\begin{equation}%
\begin{array}
[c]{c}%
\frac{\partial R^{T}}{\partial t}-\frac{|\nabla^{T}R^{T}|^{2}}{R^{T}}%
+\frac{R^{T}}{t}\geq0.
\end{array}
\label{09}%
\end{equation}

\end{corollary}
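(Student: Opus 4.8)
The plan is to obtain (\ref{08}) by contracting the matrix Harnack inequality of Theorem \ref{Thm LYH} with the inverse transverse metric, and then to derive (\ref{09}) by inserting an explicit, optimally chosen holomorphic Legendre vector $X$ together with the curvature pinching $Ric^{T}\le R^{T}g^{T}$ that follows from nonnegativity of the transverse holomorphic bisectional curvature.

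First I would trace $Z_{\alpha\overline{\beta}}^{T}$ against $(g^{T})^{\alpha\overline{\beta}}$, working at a point in foliation normal coordinates where $g_{\alpha\overline{\beta}}^{T}=\delta_{\alpha\beta}$. The only delicate point is the time-derivative term: since $R^{T}=(g^{T})^{\alpha\overline{\beta}}R_{\alpha\overline{\beta}}^{T}$ and the flow (\ref{0}) gives $\frac{\partial}{\partial t}(g^{T})^{\alpha\overline{\beta}}=R^{T\,\alpha\overline{\beta}}$, one has $(g^{T})^{\alpha\overline{\beta}}\frac{\partial}{\partial t}R_{\alpha\overline{\beta}}^{T}=\frac{\partial R^{T}}{\partial t}-|Ric^{T}|^{2}$. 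The algebraic term $R_{\alpha\overline{\gamma}}^{T}R_{\gamma\overline{\beta}}^{T}$ contracts to exactly $+|Ric^{T}|^{2}$, so these two contributions cancel. The two first-order terms contract to $\nabla_{\alpha}^{T}R^{T}X^{\alpha}+\nabla_{\overline{\alpha}}^{T}R^{T}X^{\overline{\alpha}}$, the curvature term $R_{\alpha\overline{\beta}\gamma\overline{\delta}}^{T}X^{\gamma}X^{\overline{\delta}}$ contracts over the first pair to the transverse Ricci term $R_{\alpha\overline{\beta}}^{T}X^{\alpha}X^{\overline{\beta}}$, and the last term yields $R^{T}/t$. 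Since Theorem \ref{Thm LYH} forces the trace to be nonnegative, this produces (\ref{08}).

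For the special inequality (\ref{09}) I would exploit that (\ref{08}) holds for every holomorphic Legendre vector $X$ and choose the distinguished vector $X^{\alpha}=-(g^{T})^{\alpha\overline{\beta}}\nabla_{\overline{\beta}}^{T}R^{T}/R^{T}$ (assuming $R^{T}>0$; the case $R^{T}=0$ is immediate). With this choice the two first-order terms each equal $-|\nabla^{T}R^{T}|^{2}/R^{T}$, while the remaining quadratic term $R_{\alpha\overline{\beta}}^{T}X^{\alpha}X^{\overline{\beta}}$ is bounded above, via $Ric^{T}\le R^{T}g^{T}$, by $R^{T}|X|^{2}=|\nabla^{T}R^{T}|^{2}/R^{T}$. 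Feeding these into (\ref{08}) gives $0\le \frac{\partial R^{T}}{\partial t}-\frac{2|\nabla^{T}R^{T}|^{2}}{R^{T}}+R_{\alpha\overline{\beta}}^{T}X^{\alpha}X^{\overline{\beta}}+\frac{R^{T}}{t}$, and replacing $R_{\alpha\overline{\beta}}^{T}X^{\alpha}X^{\overline{\beta}}$ by its upper bound only enlarges the right-hand side, yielding exactly (\ref{09}).

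The computation is routine once the cancellation of the $|Ric^{T}|^{2}$ terms is spotted; I expect the only genuine care to be (i) correctly differentiating the inverse metric under the flow so that the time-derivative trace carries the extra $-|Ric^{T}|^{2}$, and (ii) justifying the pinching $Ric^{T}\le R^{T}g^{T}$, which is where nonnegativity of the transverse holomorphic bisectional curvature enters: each eigenvalue of $Ric^{T}$ is a sum of nonnegative bisectional curvatures and hence dominated by the full trace $R^{T}$.
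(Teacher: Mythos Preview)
Your proposal is correct and follows exactly the paper's approach: the paper obtains (\ref{08}) by tracing Theorem \ref{Thm LYH} and derives (\ref{09}) by inserting $X=-\nabla^{T}\ln R^{T}$ together with the pinching $R_{\alpha\overline{\beta}}^{T}\le R^{T}g_{\alpha\overline{\beta}}^{T}$ coming from $R_{\alpha\overline{\beta}}^{T}\ge 0$. Your only addition is spelling out the cancellation of the $|Ric^{T}|^{2}$ contributions in the trace, which the paper leaves implicit.
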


\begin{proof}
The first inequality (\ref{08}) follows by taking the trace in Theorem
\ref{Thm LYH}. The second inequality (\ref{09}) follows by taking
$X=-\nabla^{T}\ln R^{T}$ in (\ref{08}) and observing $R_{\alpha\overline
{\beta}}^{T}\leq R^{T}g_{\alpha\overline{\beta}}^{T},$ since $R_{\alpha
\overline{\beta}}^{T}\geq0.$
\end{proof}

As a consequence of Corollary \ref{Cor}, we obtain the following Harnack
inequality for the transverse scalar curvature $R^{T}.$

\begin{corollary}
Let $g_{\alpha\overline{\beta}}^{T}(t)$ be a complete solution to the
Sasaki-Ricci flow (\ref{0}) on a Sasakian $(2n+1)$-manifold $M$ with bounded
curvature and nonnegative transverse holomorphic bisectional curvature. Then
for any points $x_{1},$ $x_{2}\in M,$ and $0<t_{1}<t_{2},$ we have%
\[%
\begin{array}
[c]{c}%
R^{T}(x_{1},t_{1})\leq R^{T}(x_{2},t_{2})\exp\left(  \frac{d_{t_{1}}^{T}%
(x_{1},x_{2})^{2}}{4(t_{2}-t_{1})}\right)  .
\end{array}
\]
Here $d_{t_{1}}^{T}(x_{1},x_{2})$ denotes the transverse distance between
$x_{1}$ and $x_{2}$ with respect to $g_{\alpha\overline{\beta}}^{T}(t_{1}).$
\end{corollary}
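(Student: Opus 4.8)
The plan is to integrate the differential Li-Yau-Hamilton estimate (\ref{08}) of Corollary \ref{Cor} along a space-time path, exactly as in the passage from the differential to the integral Harnack inequality for the K\"ahler-Ricci flow in Cao \cite{c1}. Fix a smooth path $\gamma:[t_1,t_2]\to M$ in the leaf space with $\gamma(t_1)=x_1$ and $\gamma(t_2)=x_2$, and consider the lifted space-time curve $t\mapsto(\gamma(t),t)$. Writing the real tangent vector as $\dot\gamma=\dot\gamma^\alpha Z_\alpha+\dot\gamma^{\overline\alpha}\overline Z_\alpha$ with $\dot\gamma^{\overline\alpha}=\overline{\dot\gamma^\alpha}$, I would study the one-variable function $t\mapsto\log R^T(\gamma(t),t)$, whose derivative is $\frac{1}{R^T}\bigl(\partial_t R^T+\nabla_\alpha^T R^T\dot\gamma^\alpha+\nabla_{\overline\alpha}^T R^T\dot\gamma^{\overline\alpha}\bigr)$.

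The first step is to apply the differential Harnack (\ref{08}) with the holomorphic Legendre vector chosen to be $X=\dot\gamma$, so that the first-order terms match the path derivative. This yields $\partial_t R^T+\nabla_\alpha^T R^T\dot\gamma^\alpha+\nabla_{\overline\alpha}^T R^T\dot\gamma^{\overline\alpha}\geq -R_{\alpha\overline\beta}^T\dot\gamma^\alpha\dot\gamma^{\overline\beta}-\tfrac{R^T}{t}$. Since the transverse bisectional curvature is nonnegative, the transverse Ricci curvature obeys $R_{\alpha\overline\beta}^T\leq R^T g_{\alpha\overline\beta}^T$, which is the same bound used to derive (\ref{09}), so the curvature term is controlled by the transverse speed, $R_{\alpha\overline\beta}^T\dot\gamma^\alpha\dot\gamma^{\overline\beta}\leq R^T g_{\alpha\overline\beta}^T\dot\gamma^\alpha\dot\gamma^{\overline\beta}$. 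Dividing by $R^T>0$ I obtain the pointwise differential inequality $\frac{d}{dt}\log R^T(\gamma(t),t)\geq -g_{\alpha\overline\beta}^T\dot\gamma^\alpha\dot\gamma^{\overline\beta}-\tfrac1t$, whose right-hand side is, up to the $\tfrac1t$ term, minus the transverse energy density of $\gamma$.

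Next I would integrate this inequality from $t_1$ to $t_2$. The only subtlety is that the energy density is measured in the evolving metric $g^T(t)$, whereas the claimed distance is taken in $g^T(t_1)$; this is handled by monotonicity. Under the Sasaki-Ricci flow (\ref{0}) with $Ric^T\geq0$ the transverse metric is nonincreasing, $g^T(t)\leq g^T(t_1)$ for $t\geq t_1$, so $g_{\alpha\overline\beta}^T(t)\dot\gamma^\alpha\dot\gamma^{\overline\beta}\leq g_{\alpha\overline\beta}^T(t_1)\dot\gamma^\alpha\dot\gamma^{\overline\beta}$ along the path. Integrating and then taking the infimum over all joining paths reduces the estimate to minimizing the fixed-metric energy $\int_{t_1}^{t_2}g_{\alpha\overline\beta}^T(t_1)\dot\gamma^\alpha\dot\gamma^{\overline\beta}\,dt$; by Cauchy-Schwarz the minimizer is a constant-speed $g^T(t_1)$-geodesic and the infimum equals $d_{t_1}^T(x_1,x_2)^2/(t_2-t_1)$ up to the numerical factor fixed by the K\"ahler normalization of the transverse distance $d^T$. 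Exponentiating the resulting bound on $\log R^T(x_2,t_2)-\log R^T(x_1,t_1)$ then gives the stated inequality.

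The main obstacle is precisely this comparison of the distance at the two different time slices: one must use that the flow shrinks the transverse metric, so that replacing $g^T(t)$ by $g^T(t_1)$ only increases the energy and thereby produces the distance measured with respect to $g^T(t_1)$ rather than some average metric along the curve. The hypotheses of bounded curvature and nonnegative transverse bisectional curvature are exactly what make the differential Harnack (\ref{08}) valid along the entire path and the monotonicity $g^T(t)\leq g^T(t_1)$ available; moreover the nonnegativity of $R_{\alpha\overline\beta}^T$ guarantees $R^T>0$ away from the trivial flat case, so that $\log R^T$ is well defined throughout the integration.
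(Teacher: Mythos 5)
Your overall strategy is the same as the paper's: integrate a traced Li--Yau--Hamilton inequality along the space-time lift of a constant-speed $g^T(t_1)$-geodesic and use that $Ric^T\geq 0$ makes the transverse metric nonincreasing, so the evolving energy is dominated by the fixed-time-$t_1$ energy $d_{t_1}^T(x_1,x_2)^2/(t_2-t_1)$. That part is fine. The gap is in how you extract the differential inequality along the path, and it costs you exactly the constant that the corollary asserts. Substituting $X=\dot\gamma$ directly into (\ref{08}) and estimating $R^T_{\alpha\overline\beta}\dot\gamma^\alpha\dot\gamma^{\overline\beta}\leq R^T g^T_{\alpha\overline\beta}\dot\gamma^\alpha\dot\gamma^{\overline\beta}$ gives $\frac{d}{dt}\log R^T\geq -g^T_{\alpha\overline\beta}\dot\gamma^\alpha\dot\gamma^{\overline\beta}-\frac1t$, and with the same norm convention that defines $d^T$ (for a real vector, $g^T_{\alpha\overline\beta}v^\alpha v^{\overline\beta}=\tfrac12|v|^2_{g^T}$) this integrates to $\exp\bigl(d_{t_1}^T(x_1,x_2)^2/(2(t_2-t_1))\bigr)$, not $\exp\bigl(d_{t_1}^T(x_1,x_2)^2/(4(t_2-t_1))\bigr)$. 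Your closing remark that the infimum equals the stated quantity ``up to the numerical factor fixed by the K\"ahler normalization'' is precisely where the argument is incomplete: no choice of $X$ proportional to $\dot\gamma$ recovers the factor $\tfrac14$, because matching the first-order terms to the path derivative forces the proportionality constant to be $1$.

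The paper avoids this by not coupling $X$ to the path at all. It first optimizes $X$ in (\ref{08}) via $X=-\nabla^T\ln R^T$ (together with $R^T_{\alpha\overline\beta}\leq R^Tg^T_{\alpha\overline\beta}$) to obtain the already-traced estimate (\ref{09}), $\partial_t\ln R^T-|\nabla^T\ln R^T|^2+\tfrac1t\geq 0$, and then handles the path term by the Peter--Paul inequality $\langle\nabla^T\ln R^T,\dot\gamma\rangle\geq -|\nabla^T\ln R^T|^2-\tfrac14|\dot\gamma|^2$, so that the gradient-squared terms cancel and $\frac{d}{d\tau}\ln R^T\geq -\tfrac1\tau-\tfrac14|\dot\gamma|^2_{g^T(\tau)}$ survives; this is where the $\tfrac14$ comes from. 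If you replace your step ``apply (\ref{08}) with $X=\dot\gamma$'' by ``apply (\ref{09}) and complete the square against $\dot\gamma$,'' the rest of your argument (monotonicity of the metric, constant-speed geodesic, exponentiation) goes through and matches the paper's proof. One further small point: integrating the $-\tfrac1\tau$ term produces an extra factor $\tfrac{t_2}{t_1}$ that neither your write-up nor the corollary's final display accounts for, so you should at least record where that term goes.
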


\begin{proof}
Take a Legendrian geodesic path $\gamma(\tau),$ $\tau\in\lbrack t_{1},t_{2}],$
from $x_{1}$ to $x_{2}$ at time $t_{1}$ with constant velocity $d_{t_{1}}%
^{T}(x_{1},x_{2})/(t_{2}-t_{1}).$ Consider the space-time path $\eta
(\tau)=(\gamma(\tau),\tau),$ $\tau\in\lbrack t_{1},t_{2}].$ We compute%
\[%
\begin{array}
[c]{lll}%
\ln\frac{R^{T}(x_{2},t_{2})}{R^{T}(x_{1},t_{1})} & = & \int_{t_{1}}^{t_{2}%
}\frac{d}{d\tau}\ln R^{T}(\gamma(\tau),\tau)d\tau\\
& = & \int_{t_{1}}^{t_{2}}\frac{1}{R^{T}}(\frac{\partial R^{T}}{\partial\tau
}+\langle\nabla^{T}R^{T},\frac{d\gamma}{d\tau}\rangle)d\tau\\
& \geq & \int_{t_{1}}^{t_{2}}\frac{\partial}{\partial\tau}\ln R^{T}%
-|\nabla^{T}\ln R^{T}|_{g^{T}(\tau)}^{2}-\frac{1}{4}|\frac{d\gamma}{d\tau
}|_{g^{T}(\tau)}^{2}d\tau.
\end{array}
\]
Then, by the Li-Yau-Hamilton estimate (\ref{09}) for $R^{T}$ in Corollary
\ref{Cor} and the fact that the metric is shrinking since the transverse Ricci
curvature is nonnegative, we have
\[%
\begin{array}
[c]{l}%
\ln\frac{R^{T}(x_{2},t_{2})}{R^{T}(x_{1},t_{1})}\geq\int_{t_{1}}^{t_{2}%
}(-\frac{1}{\tau}-\frac{1}{4}|\frac{d\gamma}{d\tau}|_{g^{T}(\tau)}^{2}%
)d\tau=\ln\frac{t_{1}}{t_{2}}-\frac{d_{t_{1}}^{T}(x_{1},x_{2})^{2}}%
{4(t_{2}-t_{1})}.
\end{array}
\]
The desired Harnack inequality follows by taking exponential of the above inequality.
\end{proof}

If our solution is defined for $-\infty<t<0$, we can drop the term $\frac
{1}{t}R_{\alpha\overline{\beta}}^{T}$ in Theorem \ref{Thm LYH}. Then we have

\begin{theorem}
\label{Thm2}Let $g_{\alpha\overline{\beta}}^{T}(t),$ $t\in(-\infty,T)$, be a
complete solution to the Sasaki-Ricci flow (\ref{0}) on a Sasakian
$(2n+1)$-manifold $M$ with bounded curvature and nonnegative transverse
holomorphic bisectional curvature. For any holomorphic Legendre tangent vector
$X$, let
\[%
\begin{array}
[c]{c}%
Q_{\alpha\overline{\beta}}^{T}=\frac{\partial}{\partial t}R_{\alpha
\overline{\beta}}^{T}+\nabla_{\gamma}^{T}R_{\alpha\overline{\beta}}%
^{T}X^{\gamma}+\nabla_{\overline{\gamma}}^{T}R_{\alpha\overline{\beta}}%
^{T}X^{\overline{\gamma}}+R_{\alpha\overline{\gamma}}^{T}R_{\gamma
\overline{\beta}}^{T}+R_{\alpha\overline{\beta}\gamma\overline{\delta}}%
^{T}X^{\gamma}X^{\overline{\delta}}.
\end{array}
\]
Then we have
\[%
\begin{array}
[c]{c}%
Q_{\alpha\overline{\beta}}^{T}W^{\alpha}W^{\overline{\beta}}\geq0
\end{array}
\]
for all holomorphic Legendre tangent vector $W$ and time $t\in(-\infty,T).$
\end{theorem}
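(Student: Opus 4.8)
The plan is to deduce Theorem \ref{Thm2} from Theorem \ref{Thm LYH} by a time-translation (Hamilton's trick), rather than re-running the perturbation-and-maximum-principle argument. The starting observation is that the ancient Harnack quantity $Q_{\alpha\overline{\beta}}^{T}$ is obtained from the quantity $Z_{\alpha\overline{\beta}}^{T}$ of Theorem \ref{Thm LYH} by deleting exactly the term $\tfrac{1}{t}R_{\alpha\overline{\beta}}^{T}$; that is, along any solution on $[0,T)$ one has
\[
Z_{\alpha\overline{\beta}}^{T}(t)=Q_{\alpha\overline{\beta}}^{T}(t)+\tfrac{1}{t}R_{\alpha\overline{\beta}}^{T}(t).
\]
Since our solution is ancient, we have extra room in the past to push the base point of the flow arbitrarily far back, which is what will let us send the offending $\tfrac{1}{t}$ contribution to zero.

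Concretely, I would fix $t_{0}\in(-\infty,T)$ together with a holomorphic Legendre vector $X$ and a test vector $W$. For each $s>0$ define the time-translated family $\widetilde{g}^{T}(\tau):=g^{T}(\tau-s)$. Because the Sasaki-Ricci flow \eqref{0} is autonomous, $\widetilde{g}^{T}$ is again a complete solution of \eqref{0}, now defined for $\tau\in[0,T+s)$, and time translation preserves both the bounded-curvature hypothesis and the nonnegativity of the transverse holomorphic bisectional curvature. Hence Theorem \ref{Thm LYH} applies to $\widetilde{g}^{T}$. Writing $\widetilde{Z}_{\alpha\overline{\beta}}^{T}(\tau)$ for its Harnack quantity and using $\widetilde{R}_{\alpha\overline{\beta}}^{T}(\tau)=R_{\alpha\overline{\beta}}^{T}(\tau-s)$ (the same shift holding for every derivative and curvature term, and $\partial_{\tau}=\partial_{t}$ under $\tau\mapsto\tau-s$), one reads off
\[
\widetilde{Z}_{\alpha\overline{\beta}}^{T}(\tau)=Q_{\alpha\overline{\beta}}^{T}(\tau-s)+\tfrac{1}{\tau}R_{\alpha\overline{\beta}}^{T}(\tau-s),
\]
where the crucial point is that the auxiliary term carries the current parameter time $\tfrac{1}{\tau}$ of the translated flow. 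Theorem \ref{Thm LYH} then gives $\widetilde{Z}_{\alpha\overline{\beta}}^{T}(\tau)W^{\alpha}W^{\overline{\beta}}\geq0$ for every $\tau>0$.

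Next I would specialize to $\tau=t_{0}+s$, which is positive once $s>-t_{0}$, so that $\tau-s=t_{0}$ and
\[
Q_{\alpha\overline{\beta}}^{T}(t_{0})W^{\alpha}W^{\overline{\beta}}+\tfrac{1}{t_{0}+s}R_{\alpha\overline{\beta}}^{T}(t_{0})W^{\alpha}W^{\overline{\beta}}\geq0.
\]
Letting $s\rightarrow+\infty$ with $t_{0},X,W$ fixed, the bounded-curvature assumption keeps $R_{\alpha\overline{\beta}}^{T}(t_{0})W^{\alpha}W^{\overline{\beta}}$ finite while $\tfrac{1}{t_{0}+s}\to0$, so the second term drops out and we obtain $Q_{\alpha\overline{\beta}}^{T}(t_{0})W^{\alpha}W^{\overline{\beta}}\geq0$. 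As $t_{0}\in(-\infty,T)$, $X$, and $W$ were arbitrary, this is the assertion of Theorem \ref{Thm2}. The only genuinely delicate point is this final limit: it rests essentially on the boundedness of $Rm^{T}$ to control $R_{\alpha\overline{\beta}}^{T}$ uniformly so that the $\tfrac{1}{\tau}R^{T}$ term can be annihilated; a self-contained alternative would be to repeat the evolution-equation-and-strong-maximum-principle proof of Theorem \ref{Thm LYH} verbatim while simply discarding the now-finite $\tfrac{1}{t}$ contributions in \eqref{a}, but the time-shift route is cleaner and avoids re-deriving the evolution of $Z_{\alpha\overline{\beta}}^{T}$.
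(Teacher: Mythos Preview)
Your argument is correct and is exactly the standard time-translation justification that the paper tacitly invokes when it says ``If our solution is defined for $-\infty<t<0$, we can drop the term $\frac{1}{t}R_{\alpha\overline{\beta}}^{T}$ in Theorem \ref{Thm LYH}''; you have simply written out the details the paper omits. One minor remark: in the final limit you do not even need the global curvature bound, since $R_{\alpha\overline{\beta}}^{T}(t_{0})W^{\alpha}W^{\overline{\beta}}$ is a fixed finite number at the chosen point and time; the bounded-curvature hypothesis is used only to feed the translated flow back into Theorem \ref{Thm LYH}.
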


Taking the trace again in Theorem \ref{Thm2}, we have

\begin{corollary}
Under the assumption of Theorem \ref{Thm2}, the transverse scalar curvature
$R^{T}$ satisfies the estimate
\[%
\begin{array}
[c]{c}%
\frac{\partial R^{T}}{\partial t}+\nabla_{\alpha}^{T}R^{T}X^{\alpha}%
+\nabla_{\overline{\alpha}}^{T}R^{T}X^{\overline{\alpha}}+R_{\alpha
\overline{\beta}}^{T}X^{\alpha}X^{\overline{\beta}}\geq0
\end{array}
\]
for all holomorphic Legendre tangent vector $X.$
\end{corollary}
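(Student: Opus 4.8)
The plan is to take the trace of the matrix Harnack quantity $Q_{\alpha\overline{\beta}}^{T}$ against the transverse metric $g^{T\alpha\overline{\beta}}$, exactly paralleling the passage from Theorem \ref{Thm LYH} to Corollary \ref{Cor}. The vector $X$ appearing inside $Q_{\alpha\overline{\beta}}^{T}$ is held fixed throughout, and the contraction is performed only in the $\alpha\overline{\beta}$ (i.e. $W$) slots. Since Theorem \ref{Thm2} asserts $Q_{\alpha\overline{\beta}}^{T}W^{\alpha}W^{\overline{\beta}}\geq0$ for every holomorphic Legendre tangent vector $W$, the Hermitian form $Q_{\alpha\overline{\beta}}^{T}$ is nonnegative; evaluating it on a transverse unitary frame $\{e_{\alpha}\}$ and summing shows that its $g^{T}$-trace $g^{T\alpha\overline{\beta}}Q_{\alpha\overline{\beta}}^{T}=\sum_{\alpha}Q_{\alpha\overline{\alpha}}^{T}\geq0$. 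It therefore suffices to identify this trace with the left-hand side of the claimed inequality.

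Next I would contract each term of $Q_{\alpha\overline{\beta}}^{T}$ against $g^{T\alpha\overline{\beta}}$. The two first-order terms trace to $\nabla_{\alpha}^{T}R^{T}X^{\alpha}+\nabla_{\overline{\alpha}}^{T}R^{T}X^{\overline{\alpha}}$, using that $g^{T}$ is parallel so the contraction commutes with $\nabla^{T}$. The curvature term contracts by the transverse Ricci identity $g^{T\alpha\overline{\beta}}R_{\alpha\overline{\beta}\gamma\overline{\delta}}^{T}=R_{\gamma\overline{\delta}}^{T}$, giving $R_{\alpha\overline{\beta}}^{T}X^{\alpha}X^{\overline{\beta}}$ after relabeling. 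The only subtle point is the time-derivative term: because the inverse metric evolves under (\ref{0}) by $\frac{\partial}{\partial t}g^{T\alpha\overline{\beta}}=R^{T\alpha\overline{\beta}}$, one has $g^{T\alpha\overline{\beta}}\frac{\partial}{\partial t}R_{\alpha\overline{\beta}}^{T}=\frac{\partial R^{T}}{\partial t}-R^{T\alpha\overline{\beta}}R_{\alpha\overline{\beta}}^{T}=\frac{\partial R^{T}}{\partial t}-|Ric^{T}|^{2}$.

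The main (though elementary) point is that this spurious $-|Ric^{T}|^{2}$ is cancelled exactly by the trace of the quadratic Ricci term: in a unitary frame $g^{T\alpha\overline{\beta}}R_{\alpha\overline{\gamma}}^{T}R_{\gamma\overline{\beta}}^{T}=\sum_{\alpha,\gamma}|R_{\alpha\overline{\gamma}}^{T}|^{2}=|Ric^{T}|^{2}$, using the Hermitian symmetry $R_{\gamma\overline{\alpha}}^{T}=\overline{R_{\alpha\overline{\gamma}}^{T}}$. Adding the traced terms, the $|Ric^{T}|^{2}$ contributions drop out and one is left with
\[
g^{T\alpha\overline{\beta}}Q_{\alpha\overline{\beta}}^{T}=\frac{\partial R^{T}}{\partial t}+\nabla_{\alpha}^{T}R^{T}X^{\alpha}+\nabla_{\overline{\alpha}}^{T}R^{T}X^{\overline{\alpha}}+R_{\alpha\overline{\beta}}^{T}X^{\alpha}X^{\overline{\beta}},
\]
which is exactly the desired quantity; combined with the nonnegativity of the trace noted above, the inequality follows. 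The one thing to watch is the sign bookkeeping in $\frac{\partial}{\partial t}g^{T\alpha\overline{\beta}}=R^{T\alpha\overline{\beta}}$, which is forced by differentiating $g^{T\alpha\overline{\gamma}}g_{\beta\overline{\gamma}}^{T}=\delta_{\beta}^{\alpha}$ together with the flow equation (\ref{0}); getting this sign right is what makes the two $|Ric^{T}|^{2}$ terms cancel rather than add.
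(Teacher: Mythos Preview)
Your proposal is correct and follows exactly the approach the paper indicates: the paper simply states that the corollary follows by ``taking the trace again in Theorem~\ref{Thm2},'' and your argument is a careful unpacking of precisely that trace computation, including the cancellation of the two $|Ric^{T}|^{2}$ contributions (which the paper leaves implicit, just as it does in the earlier Corollary~\ref{Cor}).
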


\begin{theorem}
Let $g_{\alpha\overline{\beta}}^{T}(t)$ be a complete solution to the
Sasaki-Ricci flow (\ref{0}) on a simply connected non-compact Sasakian
$(2n+1)$-manifold $M$ defined for $-\infty<t<\infty$ with uniformly bounded
curvature, nonnegative transverse holomorphic bisectional curvature and
positive transverse Ricci curvature where the transverse scalar curvature
$R^{T}$ assumes its maximum in space-time. Then $g_{\alpha\overline{\beta}%
}^{T}$ is necessarily a gradient Sasaki-Ricci soliton.
\end{theorem}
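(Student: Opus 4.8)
The plan is to establish the Sasaki analogue of Hamilton's rigidity theorem for eternal solutions and its transverse K\"{a}hler refinement in the spirit of Cao \cite{c1} and Hamilton \cite{h3}, using the Li--Yau--Hamilton estimate for eternal solutions proved in Theorem \ref{Thm2} together with the strong maximum principle. The positivity of the transverse Ricci curvature enters precisely to guarantee that $R_{\alpha\overline{\beta}}^{T}$ is invertible, so that the Harnack quadratic in the auxiliary Legendre vector $X$ can be minimized; the simple-connectedness will be used at the very end to integrate the resulting transverse holomorphic vector field to a global basic potential.

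First I would extract a monotonicity of $R^{T}$. Applying the trace Harnack inequality for eternal solutions (the corollary of Theorem \ref{Thm2}) with the choice $X=0$ gives
\[
\frac{\partial R^{T}}{\partial t}\geq 0
\]
everywhere on $M\times(-\infty,\infty)$, so $R^{T}$ is pointwise nondecreasing along the flow. By hypothesis $R^{T}$ attains its space-time supremum at some point $(x_{0},t_{0})$ with $t_{0}$ finite; since this maximum is interior in time we must have $\frac{\partial R^{T}}{\partial t}(x_{0},t_{0})=0$, so the nonnegative function $h:=\frac{\partial R^{T}}{\partial t}$ attains its infimum $0$ at $(x_{0},t_{0})$.

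Next I would differentiate the evolution equation $\frac{\partial R^{T}}{\partial t}=\Delta_{B}R^{T}+|Ric^{T}|^{2}$ in time, commuting $\frac{\partial}{\partial t}$ with $\Delta_{B}$ (which produces lower-order terms through $\frac{\partial}{\partial t}g_{\alpha\overline{\beta}}^{T}=-R_{\alpha\overline{\beta}}^{T}$ and the transverse Lichnerowicz identity for $\frac{\partial}{\partial t}R_{\alpha\overline{\beta}}^{T}$). This shows that $h$ solves a second-order linear parabolic equation whose coefficients are controlled by the bounded curvature. Since $h\geq 0$ and $h(x_{0},t_{0})=0$, the strong maximum principle forces $h\equiv 0$ on $M\times(-\infty,t_{0}]$, and real-analyticity in time of bounded-curvature Ricci flow propagates this to all of $M\times(-\infty,\infty)$. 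Hence $\frac{\partial R^{T}}{\partial t}\equiv 0$, so the minimizing vector $X$ determined by $R_{\alpha\overline{\beta}}^{T}X^{\alpha}=-\nabla_{\overline{\beta}}^{T}R^{T}$ saturates the trace Harnack identically.

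\textbf{The main obstacle is the equality-case analysis}, i.e. upgrading the saturated trace Harnack to the full gradient soliton structure. Here I would apply Hamilton's tensor strong maximum principle to the nonnegative Hermitian form $Q_{\alpha\overline{\beta}}^{T}$ of Theorem \ref{Thm2}: its null space is invariant under parallel transport and under the flow, and the vanishing of $Q_{\alpha\overline{\beta}}^{T}$ in the optimal direction reproduces, term by term, the defining identities (\ref{02})--(\ref{03}) with the $\frac{1}{t}$-term deleted. This yields a transverse holomorphic Legendre vector field $X$ satisfying $R_{\alpha\overline{\beta}}^{T}=\nabla_{\alpha}^{T}X_{\overline{\beta}}$ and $\nabla_{\overline{\alpha}}^{T}X_{\overline{\beta}}=0$. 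Finally, since $M$ is simply connected the associated closed basic $(1,0)$-form is exact, so $X$ is a basic holomorphic gradient $X^{\alpha}=(g^{T})^{\alpha\overline{\beta}}\partial_{\overline{\beta}}u$ for a real basic potential $u$, exhibiting $g_{\alpha\overline{\beta}}^{T}$ as a steady gradient Sasaki--Ricci soliton.
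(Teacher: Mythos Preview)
Your overall architecture is the right one, and the endgame (using simple connectedness to integrate the holomorphic Legendre vector field to a basic potential) matches the paper. However, the middle of the argument contains a genuine error.

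\medskip

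\textbf{The claim $\partial_{t}R^{T}\equiv 0$ is false.} For a nontrivial steady soliton the metric evolves by pull-back along the flow of the soliton vector field, so $R^{T}(x,t)=R^{T}(\phi_{t}(x),0)$ and hence $\partial_{t}R^{T}=\langle\nabla^{T}R^{T},X\rangle$, which vanishes only at critical points of $R^{T}$. In fact, if $\partial_{t}R^{T}$ were identically zero, then plugging $X=0$ into the trace Harnack and using that $R_{\alpha\overline{\beta}}^{T}>0$ is the Hessian of the convex quadratic $X\mapsto Q(X)$ would force the unique minimizer to be $X=0$, whence $\nabla^{T}R^{T}\equiv 0$; the evolution equation $\partial_{t}R^{T}=\Delta_{B}R^{T}+|Ric^{T}|^{2}$ would then give $Ric^{T}\equiv 0$, contradicting positivity. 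The flaw in your derivation is that $h:=\partial_{t}R^{T}$ does \emph{not} satisfy an equation of the form $(\partial_{t}-\Delta_{B})h=c\cdot h+b\cdot\nabla h$: differentiating $\partial_{t}R^{T}=\Delta_{B}R^{T}+|Ric^{T}|^{2}$ in $t$ produces terms like $R^{T,\alpha\overline{\beta}}\nabla_{\alpha}\nabla_{\overline{\beta}}R^{T}$ and $R^{T,\alpha\overline{\beta}}\nabla_{\alpha}\nabla_{\overline{\beta}}R^{T}$ coming from $\partial_{t}R_{\alpha\overline{\beta}}^{T}$, which are zeroth order in $h$ but not proportional to $h$, so the Hopf strong maximum principle does not apply. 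What the paper does instead is apply the strong maximum principle to the full Harnack quantity $Q_{\min}=\min_{X}Q(X)$ itself; this works because $Q_{\alpha\overline{\beta}}^{T}$ (with $X$ extended as in (\ref{c})) obeys the good evolution equation (\ref{a}), and one concludes only that $Q_{\min}\equiv 0$ at earlier times, with a \emph{nonzero} optimal $X$ in general.

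\medskip

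\textbf{The passage from the equality case to the soliton equation is also a nontrivial step you skipped.} Once one has $Q_{\alpha\overline{\beta}}^{T}\equiv 0$ at the optimal $X$, the first variation in $X$ gives the identities (\ref{t3})--(\ref{t4}) (your ``(\ref{02})--(\ref{03}) without $1/t$''), which are indeed the differentiated soliton identities---but they do \emph{not} by themselves give $R_{\alpha\overline{\beta}}^{T}=\nabla_{\alpha}^{T}X_{\overline{\beta}}$ and $\nabla_{\overline{\alpha}}^{T}X_{\overline{\beta}}=0$. The paper closes this gap by applying $(\partial_{t}-\Delta_{B})$ to both the scalar constraint (\ref{t1}) and to (\ref{t2}), obtaining (\ref{t5})--(\ref{t6}); substituting and simplifying leads to the identity (\ref{t7}), which after diagonalizing $R_{\alpha\overline{\beta}}^{T}$ and using its strict positivity a second time forces (\ref{t8}). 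This is where the hypothesis $Ric^{T}>0$ is used in an essential, pointwise way beyond mere invertibility, and it is the heart of the argument.
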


\begin{proof}
Let $g_{\alpha\overline{\beta}}^{T}(t)$ be a complete solution to equation
(\ref{0}) defined for $-\infty<t<\infty$ with bounded curvature, nonnegative
transverse holomorphic bisectional curvature and positive transverse Ricci
curvature where the transverse scalar curvature $R^{T}$ assumes its maximum at
a point $(x_{0},t_{0})$ in space time. Then at this point, $\frac{\partial
R^{T}}{\partial t}=0$ and the quadratic form
\[%
\begin{array}
[c]{c}%
Q=(g^{T})^{\alpha\overline{\beta}}Q_{\alpha\overline{\beta}}^{T}%
=\frac{\partial R^{T}}{\partial t}+\nabla_{\alpha}^{T}R^{T}X^{\alpha}%
+\nabla_{\overline{\alpha}}^{T}R^{T}X^{\overline{\alpha}}+R_{\alpha
\overline{\beta}}^{T}X^{\alpha}X^{\overline{\beta}}=0
\end{array}
\]
in the direction $X=0.$ By the strong maximum principle we know that at any
earlier time, there is a $X$ such that $Q=0$ in the direction $X$. In fact, by
considering the first variation of $Q$ in $X$, null vectors must satisfy the
equation
\begin{equation}%
\begin{array}
[c]{c}%
\nabla_{\alpha}^{T}R^{T}+R_{\alpha\overline{\beta}}^{T}X^{\overline{\beta}}=0.
\end{array}
\label{t1}%
\end{equation}
Since the transverse Ricci tensor is positive, we see that such a null vector
$X$ is unique and varies smoothly in space-time. Hence it gives rise to a
time-dependent smooth section of the holomorphic Legendre tangent bundle. For
such a null vector $X,$ we have $Q=0.$ It follows from (\ref{t1}) that
\begin{equation}%
\begin{array}
[c]{c}%
\frac{\partial R^{T}}{\partial t}+\nabla_{\overline{\alpha}}^{T}%
R^{T}X^{\overline{\alpha}}=0.
\end{array}
\label{t2}%
\end{equation}
Moreover, since $Q_{\alpha\overline{\beta}}^{T}\geq0$ and its trace
$Q=(g^{T})^{\alpha\overline{\beta}}Q_{\alpha\overline{\beta}}^{T}=0$, we have
\[%
\begin{array}
[c]{c}%
Q_{\alpha\overline{\beta}}^{T}=\frac{\partial}{\partial t}R_{\alpha
\overline{\beta}}^{T}+\nabla_{\gamma}^{T}R_{\alpha\overline{\beta}}%
^{T}X^{\gamma}+\nabla_{\overline{\gamma}}^{T}R_{\alpha\overline{\beta}}%
^{T}X^{\overline{\gamma}}+R_{\alpha\overline{\gamma}}^{T}R_{\gamma
\overline{\beta}}^{T}+R_{\alpha\overline{\beta}\gamma\overline{\delta}}%
^{T}X^{\gamma}X^{\overline{\delta}}=0.
\end{array}
\]
Again from the first variation of $Q_{\alpha\overline{\beta}}^{T}$ in $X$, it
follows that
\begin{equation}%
\begin{array}
[c]{c}%
\nabla_{\gamma}^{T}R_{\alpha\overline{\beta}}^{T}+R_{\alpha\overline{\beta
}\gamma\overline{\delta}}^{T}X^{\overline{\delta}}=0,
\end{array}
\label{t3}%
\end{equation}
and hence
\begin{equation}%
\begin{array}
[c]{c}%
\frac{\partial}{\partial t}R_{\alpha\overline{\beta}}^{T}+R_{\alpha
\overline{\gamma}}^{T}R_{\gamma\overline{\beta}}^{T}+\nabla_{\overline{\gamma
}}^{T}R_{\alpha\overline{\beta}}^{T}X^{\overline{\gamma}}=0.
\end{array}
\label{t4}%
\end{equation}
Note that equations (\ref{t3}) and (\ref{t4}) are the first order and the
second order equations satisfied by a gradient Sasaki-Ricci soliton. Now we
show that these identities indeed imply that $X$ is a holomorphic Legendre
gradient vector field and the solution $g_{\alpha\overline{\beta}}^{T}$ is a
gradient soliton.

First, by applying the operator $(\frac{\partial}{\partial t}-\Delta_{B})$ to
(\ref{t1}) and using the evolution equations for the transverse Ricci
curvature and the transverse scalar curvature and (\ref{t3}), we get
\begin{equation}%
\begin{array}
[c]{lll}%
0 & = & (\frac{\partial}{\partial t}-\Delta_{B})(\nabla_{\alpha}^{T}%
R^{T}+R_{\alpha\overline{\beta}}^{T}X^{\overline{\beta}})\\
& = & \nabla_{\alpha}^{T}R_{\gamma\overline{\delta}}^{T}(R_{\delta
\overline{\gamma}}^{T}-\nabla_{\overline{\gamma}}^{T}X^{\overline{\delta}%
})+\frac{1}{2}R_{\alpha\overline{\beta}}^{T}\nabla_{\beta}^{T}R^{T}%
+R_{\alpha\overline{\beta}}^{T}(\frac{\partial}{\partial t}-\Delta
_{B})X^{\overline{\beta}}-\nabla_{\overline{\gamma}}^{T}R_{\alpha
\overline{\beta}}^{T}\nabla_{\gamma}^{T}X^{\overline{\beta}}\\
& = & \nabla_{\alpha}^{T}R_{\gamma\overline{\delta}}^{T}(R_{\delta
\overline{\gamma}}^{T}-\nabla_{\overline{\gamma}}^{T}X^{\overline{\delta}%
})-\nabla_{\overline{\gamma}}^{T}R_{\alpha\overline{\beta}}^{T}\nabla_{\gamma
}^{T}X^{\overline{\beta}},
\end{array}
\label{t5}%
\end{equation}
here $(\frac{\partial}{\partial t}-\Delta_{B})X_{\beta}=\frac{1}{2}%
R_{\beta\overline{\gamma}}^{T}X^{\overline{\gamma}}=-\frac{1}{2}\nabla_{\beta
}^{T}R^{T}$ at a point. Next, we apply $(\frac{\partial}{\partial t}%
-\Delta_{B})$ to (\ref{t2}). From (\ref{t4}), we have
\begin{equation}%
\begin{array}
[c]{lll}%
0 & = & (\frac{\partial}{\partial t}-\Delta_{B})(\frac{\partial R^{T}%
}{\partial t}+\nabla_{\overline{\alpha}}^{T}R^{T}X^{\overline{\alpha}})\\
& = & 2\nabla_{\overline{\beta}}^{T}\nabla_{\alpha}^{T}R^{T}R_{\beta
\overline{\alpha}}^{T}+R_{\alpha\overline{\beta}}^{T}R_{\beta\overline{\gamma
}}^{T}R_{\gamma\overline{\alpha}}^{T}+\nabla_{\overline{\alpha}}^{T}%
R_{\beta\overline{\gamma}}^{T}R_{\gamma\overline{\beta}}^{T}X^{\overline
{\alpha}}-\frac{1}{2}R_{\alpha\overline{\beta}}^{T}X^{\overline{\beta}}%
\nabla_{\overline{\alpha}}^{T}R^{T}\\
&  & +\nabla_{\overline{\alpha}}^{T}R^{T}(\frac{\partial}{\partial t}%
-\Delta_{B})X^{\overline{\alpha}}-\nabla_{\overline{\beta}}^{T}\nabla_{\alpha
}^{T}R^{T}\nabla_{\overline{\alpha}}^{T}X^{\overline{\beta}}-\nabla
_{\overline{\beta}}^{T}\nabla_{\overline{\alpha}}^{T}R^{T}\nabla_{\alpha}%
^{T}X^{\overline{\beta}}\\
& = & \nabla_{\overline{\beta}}^{T}\nabla_{\alpha}^{T}R^{T}(2R_{\beta
\overline{\alpha}}^{T}-\nabla_{\overline{\alpha}}^{T}X^{\overline{\beta}%
})+R_{\alpha\overline{\beta}}^{T}R_{\beta\overline{\gamma}}^{T}R_{\gamma
\overline{\alpha}}^{T}+\nabla_{\overline{\alpha}}^{T}R_{\beta\overline{\gamma
}}^{T}R_{\gamma\overline{\beta}}^{T}X^{\overline{\alpha}}\\
&  & -\nabla_{\overline{\beta}}^{T}\nabla_{\overline{\alpha}}^{T}R^{T}%
\nabla_{\beta}^{T}X^{\overline{\alpha}}.
\end{array}
\label{t6}%
\end{equation}

From (\ref{t1}), one get
\[%
\begin{array}
[c]{l}%
\nabla_{\overline{\beta}}^{T}\nabla_{\alpha}^{T}R^{T}=-R_{\alpha
\overline{\gamma}}^{T}\nabla_{\overline{\beta}}^{T}X^{\overline{\gamma}%
}-\nabla_{\overline{\beta}}^{T}R_{\alpha\overline{\gamma}}^{T}X^{\overline
{\gamma}}=-R_{\gamma\overline{\beta}}^{T}\nabla_{\alpha}^{T}X^{\gamma}%
-\nabla_{\gamma}^{T}R_{\alpha\overline{\beta}}^{T}X^{\gamma},
\end{array}
\]
then substitute this equation into (\ref{t6}) and use (\ref{t5}), we obtain%
\begin{equation}%
\begin{array}
[c]{l}%
R_{\alpha\overline{\beta}}^{T}(R_{\beta\overline{\gamma}}^{T}-\nabla
_{\overline{\gamma}}^{T}X^{\overline{\beta}})(R_{\gamma\overline{\alpha}}%
^{T}-\nabla_{\gamma}^{T}X^{\overline{\alpha}})+R_{\alpha\overline{\beta}}%
^{T}\nabla_{\gamma}^{T}X^{\overline{\beta}}\nabla_{\overline{\gamma}}%
^{T}X^{\alpha}=0.
\end{array}
\label{t7}%
\end{equation}
Now we diagonalize the transverse Ricci tensor so that $R_{\alpha
\overline{\beta}}^{T}=R_{\alpha\overline{\alpha}}^{T}\delta_{\alpha\beta}.$
Thus (\ref{t7}) will become
\[%
\begin{array}
[c]{l}%
R_{\alpha\overline{\alpha}}^{T}[|R_{\alpha\overline{\beta}}^{T}-\nabla
_{\overline{\beta}}^{T}X^{\overline{\alpha}}|^{2}+|\nabla_{\beta}%
^{T}X^{\overline{\alpha}}|^{2}]=0.
\end{array}
\]
Since the transverse Ricci curvature is positive we get
\begin{equation}%
\begin{array}
[c]{l}%
\nabla_{\beta}^{T}X^{\overline{\alpha}}=\nabla_{\overline{\beta}}^{T}%
X^{\alpha}=0\text{ \ \textrm{and} \ }R_{\alpha\overline{\beta}}^{T}%
=\nabla_{\overline{\beta}}^{T}X^{\overline{\alpha}}=\nabla_{\alpha}%
^{T}X^{\beta}.
\end{array}
\label{t8}%
\end{equation}
Therefore $X$ is a holomorphic Legendre vector field and the solution metric
$g_{\alpha\overline{\beta}}^{T}$ is a soliton. Furthermore, the second
identity in (\ref{t8}) and since $M$ is simply connected imply that the vector
field $X$ is the gradient of some basic function $f$ on $M$. So our soliton
$g_{\alpha\overline{\beta}}^{T}$ is indeed a gradient soliton.
\end{proof}

\section{The
Long-Time Solution
}

In this section, we will show Theorem \ref{T1A}.

(I) For the long-time existence, we first prove that

\begin{lemma}
\label{22}Let $(M^{2n+1},J,\theta)$ be a compact noncompact Sasakian
$(2n+1)$-manifold with nonnegative transverse bisectional curvature and
$n\geq2$. Suppose that there exist positive constants $C_{1}$, $C_{2}$ and
$0<\varepsilon\leq1$ such for any $x_{0}$,%
\[%
\begin{array}
[c]{c}%
R^{T}(x,0)\leq C_{1},\text{ }x\in M,\\
\frac{1}{\mathrm{Vol}(B_{\xi,g^{T}(x,0)}(x_{0},r))}\int_{B_{\xi,g^{T}%
(x,0)}(x_{0},r)}R^{T}(x,0)\leq\frac{C_{2}}{1+r^{1+\varepsilon}},\text{ }0\leq
r<+\infty,
\end{array}
\]
where $B_{\xi,g^{T}(x,0)}(x_{0},r)$ is the transverse ball with respect to the
transverse metric $g^{T}(x,0)$. Then the solution of (\ref{9}) satisfies the
following
\begin{equation}%
\begin{array}
[c]{c}%
F(x,t)\geq-C(1+t)^{\frac{1-\varepsilon}{1+\varepsilon}}%
\end{array}
\label{32}%
\end{equation}
on $M\times\lbrack0,t_{\max})$, where $0<C<+\infty$ is a constant depending
only on $n,$ $\varepsilon,$ $C_{1}$ and $C_{2}$. In particular, if
$\varepsilon=1$, we have
\[
F(x,t)\geq-C.
\]

\end{lemma}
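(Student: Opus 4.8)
The plan is to interpret $F$ as the logarithmic transverse volume ratio
\[
F(x,t)=\log\frac{\det g^{T}_{i\overline{j}}(x,t)}{\det g^{T}_{i\overline{j}}(x,0)},
\]
which is the time-derivative of the Monge-Amp\`ere potential $\varphi$ in (\ref{2020A}), so that $F(x,0)=0$. Differentiating along the flow (\ref{9}) and using (\ref{2025}) yields the two basic identities
\begin{equation}
\frac{\partial}{\partial t}F(x,t)=-R^{T}(x,t),\qquad \Big(\frac{\partial}{\partial t}-\Delta_{B}\Big)F=-\,\mathrm{tr}_{g^{T}(t)}\mathrm{Ric}^{T}(\,\cdot\,,0).
\end{equation}
The first shows that the assertion is equivalent to the scalar-curvature integral bound $\int_{0}^{t}R^{T}(x,s)\,ds\le C(1+t)^{(1-\varepsilon)/(1+\varepsilon)}$, while the second is the parabolic equation I would actually estimate. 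Since nonnegative transverse bisectional curvature, hence $\mathrm{Ric}^{T}\ge0$, is preserved by the flow (Proposition \ref{P1}), the right-hand side $\mathrm{tr}_{g^{T}(t)}\mathrm{Ric}^{T}(\cdot,0)$ is nonnegative and the transverse metric is shrinking, $g^{T}(t)\le g^{T}(0)$.

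Next I would represent $F$ by Duhamel's principle: letting $G(x,t;y,s)\ge0$ be the transverse heat kernel of $\frac{\partial}{\partial t}-\Delta_{B}$ along the flow, the equation above together with $F(x,0)=0$ gives
\begin{equation}
-F(x,t)=\int_{0}^{t}\!\!\int_{M}G(x,t;y,s)\,\mathrm{tr}_{g^{T}(s)}\mathrm{Ric}^{T}(y,0)\,dV_{s}(y)\,ds\ \ge 0,
\end{equation}
so it suffices to bound this space-time integral from above. Because $\mathrm{Ric}^{T}\ge0$ is preserved, the transverse geometry obeys Bishop--Gromov volume comparison and Li--Yau Gaussian heat-kernel estimates, giving the on-diagonal bound $G(x,t;y,s)\le C\,\mathrm{Vol}\big(B_{\xi,g^{T}(s)}(x,\sqrt{t-s})\big)^{-1}$ with Gaussian off-diagonal decay.

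The core of the estimate is then a dyadic decomposition of the inner integral over the annuli $B_{\xi}(x,2^{k+1}r_{s})\setminus B_{\xi}(x,2^{k}r_{s})$, where $r_{s}=\sqrt{t-s}$ is the heat-kernel scale measured in the time-$s$ metric. On each annulus I would bound $G$ by its Gaussian upper bound and control $\int_{B_{\xi}(x,r)}\mathrm{tr}_{g^{T}(s)}\mathrm{Ric}^{T}(\cdot,0)$ by the averaged-decay hypothesis $\frac{1}{\mathrm{Vol}(B_{\xi}(x,r))}\int_{B_{\xi}(x,r)}R^{T}(\cdot,0)\le C_{2}(1+r^{1+\varepsilon})^{-1}$, using Corollary \ref{Cor} and the flow distance/volume distortion to pass between the $g^{T}(s)$- and $g^{T}(0)$-geometries. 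Summing the resulting geometric series and integrating in $s$ over $[0,t]$ would produce the claimed growth $(1+t)^{(1-\varepsilon)/(1+\varepsilon)}$, the borderline $\varepsilon=1$ giving the uniform bound $F\ge -C$.

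The hard part is that the decay hypothesis is anchored to the fixed initial metric while the estimate is run against the time-$s$ heat kernel: the shrinking of $g^{T}$ simultaneously inflates $\mathrm{tr}_{g^{T}(s)}\mathrm{Ric}^{T}(\cdot,0)$, contracts distances, and shrinks $dV_{s}$, and these competing effects must be balanced with a constant independent of $t_{\max}$. In particular the precise exponent $(1-\varepsilon)/(1+\varepsilon)$ — rather than the naive $(1-\varepsilon)/2$ coming from a crude heat-kernel scaling — reflects the self-consistent coupling between the curvature decay and the flow-induced distance distortion. I would resolve this by a bootstrap/continuity argument on $[0,t_{\max})$: assume a provisional bound on $F$, use it through the volume ratio $e^{F}$ and Shi-type distortion estimates to control the change of scale between $g^{T}(s)$ and $g^{T}(0)$, feed this into the heat-kernel estimate, and close the loop. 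Throughout, the preserved nonnegativity of $\mathrm{Ric}^{T}$ (Proposition \ref{P1}) supplies the volume comparison and Li--Yau bounds, and Corollary \ref{Cor} transfers curvature information across time.
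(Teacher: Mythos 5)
Your setup (the definition of $F$, the identity $\frac{\partial}{\partial t}F=-R^{T}$, the monotonicity of $F$, and $g^{T}(t)\le g^{T}(0)$) matches the paper, but from there you take a genuinely different and, as written, ungrounded route. The paper never forms the evolving-metric heat equation $(\frac{\partial}{\partial t}-\Delta_{B,g^{T}(t)})F=-\mathrm{tr}_{g^{T}(t)}\mathrm{Ric}^{T}(\cdot,0)$; instead it exploits the matrix inequality $\frac{\det g^{T}(t)}{\det g^{T}(0)}(g^{T})^{i\bar j}(t)\le (g^{T})^{i\bar j}(0)$ to obtain the purely elliptic bound $\Delta_{B,g^{T}(0)}F(\cdot,t)\le R^{T}(\cdot,0)$ with respect to the \emph{fixed} initial metric. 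It then (i) represents $F$ by the Green function of $g^{T}(0)$ on the superlevel sets $\Omega_{\alpha}=\{G>\alpha\}$, where the hypothesis on the averaged decay of $R^{T}(\cdot,0)$ applies verbatim and yields $F(x_{0},t)\ge -C a^{1-\varepsilon}+\frac{C}{\mathrm{Vol}(B_{0}(x_{0},a))}\int_{B_{0}(x_{0},a)}F$, and (ii) derives the weighted integral inequality $\int_{M}\psi(1-e^{F})\le \frac{Ct}{a^{2}}\int_{M}(-F)\psi+\frac{Ct}{a^{1+\varepsilon}}\mathrm{Vol}(B_{0}(x_{0},a))$ from a Schoen--Yau cutoff; combining the two and optimizing the scale $a\sim(1+t)^{1/(1+\varepsilon)}$ (the Chen--Zhu argument) is what produces the exponent $(1-\varepsilon)/(1+\varepsilon)$.

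The concrete gap in your version is the source term $\mathrm{tr}_{g^{T}(s)}\mathrm{Ric}^{T}(\cdot,0)$. Since $g^{T}(s)\le g^{T}(0)$, this trace is \emph{larger} than $R^{T}(\cdot,0)$, and the only available comparison is $\mathrm{tr}_{g^{T}(s)}\mathrm{Ric}^{T}(\cdot,0)\le e^{-F(\cdot,s)}R^{T}(\cdot,0)$ (diagonalizing simultaneously, each ratio of inverse eigenvalues is bounded by the full product $e^{-F}$). Thus your Duhamel integrand contains the very quantity you are trying to estimate, and the proposed bootstrap does not close: a provisional polynomial lower bound $F\ge -C(1+s)^{\gamma}$ only gives the exponential bound $e^{-F}\le e^{C(1+s)^{\gamma}}$, which destroys any polynomial conclusion. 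In addition, the Li--Yau Gaussian bounds you invoke for the heat kernel of the time-dependent operator $\frac{\partial}{\partial t}-\Delta_{B,g^{T}(t)}$ are not available at this stage with constants independent of $t_{\max}$ (uniform curvature bounds along the flow are only established afterwards, using this very lemma), whereas the paper's Green function estimates concern the fixed metric $g^{T}(0)$, for which nonnegative transverse Ricci curvature suffices. The fix is precisely the paper's reversal of the inequality: keep the Laplacian anchored to $g^{T}(0)$ via $\Delta_{B,g^{T}(0)}F\le R^{T}(\cdot,0)$, accept that this gives only an elliptic (not space-time) representation, and recover the time dependence through the separate $\int_{M}\psi e^{F}$ estimate.
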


\begin{proof}
We define a function $F(x,t)$ on $M\times\lbrack0,t_{\max})$ as follows
\[%
\begin{array}
[c]{c}%
F(x,t)=\log\frac{\text{\textrm{det}}(g_{i\bar{j}}^{T}(x,t))}%
{\text{\textrm{det}}(g_{i\bar{j}}^{T}(x,0))}.
\end{array}
\]
So we can obtain the following
\begin{equation}%
\begin{array}
[c]{c}%
\frac{\partial F(x,t)}{\partial t}=g^{T{i\bar{j}}}\frac{\partial}{\partial
t}g_{i\bar{j}}^{T}=-R^{T}%
\end{array}
\label{8}%
\end{equation}
Because of the nonnegativity of the transverse bisectional curvature of
$g_{i\bar{j}}^{T}$, we can know that $F(x,t)$ is non-increasing in $t$ and
$F(x,0)=0$. And by the equation (\ref{9}), we can obtain that
\[%
\begin{array}
[c]{c}%
g_{i\bar{j}}^{T}(x,t)\leq g_{i\bar{j}}^{T}(x,0)
\end{array}
\]
on $M$. Now we compute%
\begin{equation}%
\begin{array}
[c]{ll}
& e^{F(x,t)}R^{T}(x,t)\\
= & \frac{\text{\textrm{det}}(g_{i\bar{j}}^{T}(x,t))}{\text{\textrm{det}%
}(g_{i\bar{t}}^{T}(x,0))}g^{Ti\bar{j}}(x,t)R_{i\bar{j}}^{T}(x,t)\\
\leq & g^{Ti\bar{j}}(x,0)R_{i\bar{j}}^{T}(x,t)\\
= & g^{Ti\bar{j}}(x,0)(R_{i\bar{j}}^{T}(x,t)-R_{i\bar{j}}^{T}(x,0)))+g^{Ti\bar
{j}}(x,0)R_{i\bar{j}}^{T}(x,0)\\
= & -g^{Ti\bar{j}}(x,0)\left(  \frac{\partial^{2}}{\partial z^{i}\partial
\bar{z}^{j}}\log\text{\textrm{det}}(g_{i\bar{j}}^{T}(x,t))-\frac{\partial^{2}%
}{\partial z^{i}\partial\bar{z}^{j}}\log\text{\textrm{det}}(g_{i\bar{j}}%
^{T}(x,0)\right)  +R^{T}(x,0)\\
= & -g^{Ti\bar{j}}(x,0)\frac{\partial^{2}F(x,t)}{\partial z^{i}\partial\bar
{z}^{j}}+R^{T}(x,0)\\
= & -\Delta_{b,g^{T}(x,0)}F(x,t)+R^{T}(x,0),
\end{array}
\label{10}%
\end{equation}
where $\Delta_{B,g^{T}(x,0)}$ is the basic Laplace operator with respect to
the initial metric $g_{i\bar{j}}^{T}(x,0)$. From (\ref{8}) and (\ref{10}), we
can obtain the following
\[%
\begin{array}
[c]{c}%
e^{F(x,t)}\frac{\partial F(x,t)}{\partial t}\geq\Delta_{B,g^{T}(x,0)}%
F(x,t)-R^{T}(x,0),
\end{array}
\]
and
\[%
\begin{array}
[c]{c}%
\Delta_{B,g^{T}(x,0)}F(x,t)\leq R^{T}(x,0)
\end{array}
\]
on $M\times\lbrack0,t_{\max})$.

Since the transverse bisectional curvature of $g_{i\bar{j}}^{T}(x,0)$ is
nonnegative, we can know that the transverse Ricci curvature is nonnegative.
Let $H(x,y,t)$ be the heat kernel of the CR heat equation on $M$ with respect
to $g_{i\bar{j}}^{T}(x,0)$. We define the Green function
\[%
\begin{array}
[c]{c}%
G_{g_{i\bar{j}}^{T}(x,0)}(x,y)=\int_{0}^{\infty}H(x,y,t)dt,
\end{array}
\]
if the integral on the right side converges. Then by \cite[Theorem 2.2.]{cchl}
and the subgradient estimate in \cite{cklt}, we know that for $x,y\in
M^{2n+1}$,
\[%
\begin{array}
[c]{c}%
\sigma^{-1}\frac{r_{0}^{2}(x,y)}{Vol(B_{0}(x,r_{0}(x,y)))}\leq G_{g_{i\bar{j}%
}^{T}(x,0)}(x,y)\leq\sigma\frac{r_{0}^{2}(x,y)}{Vol(B_{0}(x,r_{0}(x,y)))}%
\end{array}
\]
and
\[%
\begin{array}
[c]{c}%
|\nabla_{B,g_{i\bar{j}}^{T}(x,0)}G_{g_{i\bar{j}}^{T}(x,0)}(x,y)|^{2}\leq
\frac{C_{3}r_{0}(x,y)}{\mathrm{Vol}(B_{0}(x,r_{0}(x,y)))},
\end{array}
\]
where $B_{0}(x,r_{0}(x,y)))$ is the geodesic ball of radius and centered at
$x_{0}$ with respect to $g_{i\bar{j}}(x,0)$, and $r_{0}(x,y)$ is the distance
between $x$ and $y$ with respect to $g_{i\bar{j}}(x,0)$, $\sigma$ and $C_{3}$
are positive constants depending only on $n$. We define
\[%
\begin{array}
[c]{c}%
\Omega_{\alpha}=\{y\in M^{2n+1}:G_{g_{i\bar{j}}^{T}(x,0)}(x,y)>\alpha\}
\end{array}
\]
for any $\alpha>0$. Since $F(x,t)$ is a basic function on $M^{2n+1}$ and by
using Green formula for any $x_{0}\in M^{2n+1}$ and any $t\in\lbrack0,t_{\max
})$, we have the following%
\begin{equation}%
\begin{array}
[c]{lll}%
F(x_{0},t) & = & \int_{\Omega_{\alpha}}(\alpha-G_{g_{i\bar{j}}^{T}(x,0)}%
(x_{0},y))\Delta_{b,g_{i\bar{j}}^{T}(x,0)}F(x,t)dv(y)\\
&  & -\int_{\partial\Omega_{\alpha}}F(y,t)\frac{\partial G_{g_{i\bar{j}}%
^{T}(x,0)}(x_{0},y))}{\partial\nu}d\sigma(y)\\
& \geq & \int_{\Omega_{\alpha}}(\alpha-G_{g_{i\bar{j}}^{T}(x,0)}%
(x_{0},y))R^{T}(y,0)dv(y)\\
&  & -\int_{\partial\Omega_{\alpha}}F(y,t)\frac{\partial G_{g_{i\bar{j}}%
^{T}(x,0)}(x_{0},y))}{\partial\nu}d\sigma(y)\\
& \geq & -\int_{\Omega_{\alpha}}G_{g_{i\bar{j}}^{T}(x,0)}(x_{0},y)R^{T}%
(y,0)dv(y)\\
&  & -\int_{\partial\Omega_{\alpha}}F(y,t)\frac{\partial G_{g_{i\bar{j}}%
^{T}(x,0)}(x_{0},y))}{\partial\nu}d\sigma(y)
\end{array}
\label{14}%
\end{equation}
Set $r(\alpha)\geq1$ as following
\[%
\begin{array}
[c]{c}%
\alpha=\frac{r^{2}(\alpha)}{Vol(B_{0}(x_{0},r(\alpha)))}%
\end{array}
\]
Then by (\ref{14}) and the assumption (ii), we have
\[
C_{4}^{-1}r(\alpha)\leq r_{0}(x_{0},y)\leq C_{4}r(\alpha),
\]
for any $y\in\partial\Omega_{\alpha}$.
\begin{equation}%
\begin{array}
[c]{c}%
-\int_{\partial\Omega_{\alpha}}F(x,t)\frac{\partial G_{g_{i\bar{j}}^{T}%
(x,0)}(x_{0},y)}{\partial\nu}d\sigma(y)\geq\frac{C_{3}r(\alpha)}%
{Vol(B_{0}(x_{0},r(\alpha)))}\int_{\partial\Omega_{\alpha}}F(y,t)d\sigma(y)
\end{array}
\label{15}%
\end{equation}
and
\begin{equation}%
\begin{array}
[c]{ll}
& -\int_{\Omega_{\alpha}}G_{g_{i\bar{j}}^{T}(x,0)}(x_{0},y)R^{T}(y,0)dv(y)\\
\geq & -\frac{\sigma r^{2}(\alpha)}{\mathrm{Vol}(B_{0}(x_{0},r(\alpha)))}%
\int_{B_{0}(x_{0},C_{4}r(\alpha))}R^{T}(y,0)dv(y)\\
\geq & -\frac{\mathrm{Vol}(B_{\xi,g^{T}(x,0)}(x_{0},C_{4}r(\alpha
)))}{\mathrm{Vol}(B_{0}(x_{0},r(\alpha)))}\frac{\sigma r^{2}(\alpha
)}{\mathrm{Vol}(B_{\xi,g^{T}(x,0)}(x_{0},C_{4}r(\alpha)))}\int_{B_{\xi
,g^{T}(x,0)}(x_{0},C_{4}r(\alpha))}R^{T}(y,0)dv(y)\\
\geq & -\frac{\mathrm{Vol}(B_{\xi,g^{T}(x,0)}(x_{0},C_{4}r(\alpha
)))}{\mathrm{Vol}(B_{0}(x_{0},r(\alpha)))}\frac{C_{5}r^{2}(\alpha
)}{1+r^{1+\varepsilon}(\alpha)}\\
\geq & -C_{6}\frac{C_{5}r^{2}(\alpha)}{1+r^{1+\varepsilon}(\alpha)}\geq
-C_{7}r^{1-\varepsilon}(\alpha),
\end{array}
\label{16}%
\end{equation}
where $C_{6},C_{7}$ are positive constants depending only on $C_{1},C_{2}$ and
$n$. From (\ref{14}), (\ref{15}) and (\ref{16}), we have the following
\[%
\begin{array}
[c]{c}%
F(x_{0},t)\geq-C_{7}r^{1-\varepsilon}(\alpha)+\frac{C_{3}r(\alpha
)}{\mathrm{Vol}(B_{0}(x_{0},r(\alpha)))}\int_{\partial\Omega_{\alpha}%
}F(y,t)d\sigma(y)
\end{array}
\]
Integrating from $\frac{\alpha}{2}$ to $\alpha$, we have
\[%
\begin{array}
[c]{c}%
F(x_{0},t)\geq-C_{8}r^{1-\varepsilon}(\alpha)+\frac{C_{8}}{\mathrm{Vol}%
(B_{0}(x_{0},r(\alpha)))}\int_{B_{0}(x_{0},C_{4}r(\alpha))}F(y,t)dv(y)
\end{array}
\]
Since the Sasakian manifold $M^{2n+1}$ is a Riemannian manifold with respect
to $g_{i\bar{j}}(x,0)$, we can use the standard volume comparison for
Riemannian manifold and obtain the following
\[%
\begin{array}
[c]{c}%
F(x_{0},t)\geq-C_{9}a^{1-\varepsilon}+\frac{C_{9}}{\mathrm{Vol}(B_{0}%
(x_{0},a))}\int_{B_{0}(x_{0},a)}F(y,t)dv(y)
\end{array}
\]
for any positive constant $a$, where $C_{8},$ $C_{9}$ are positive constants
depending only on $\varepsilon,$ $C_{1},$ $C_{2}$ and $n$. Since the relation
between the transverse Ricci curvature and Ricci curvature, we have
\[
Ric\geq-2g(x,0).
\]
By using Theorem 1.4.2 in \cite{sy} and a simple scaling argument, there exist
a constant $C_{10}(n)>0$ depending only on $n$ such that for any fixed point
$x_{0}\in M^{2n+1}$ and a number $0<a<\infty$, there is a smooth function
$\psi(x)$ satisfying
\begin{equation}%
\begin{array}
[c]{c}%
e^{-C_{10}(n)(1+\frac{r_{0}(x_{0},x)}{a})}\leq\psi(x)\leq e^{-(1+\frac
{r_{0}(x_{0},x)}{a})}%
\end{array}
\label{19}%
\end{equation}
and
\begin{equation}%
\begin{array}
[c]{c}%
|\nabla\psi(x)|_{g(x,0)}\leq\frac{C_{10}}{a}\psi
\end{array}
\label{20}%
\end{equation}
and
\begin{equation}%
\begin{array}
[c]{c}%
|\Delta_{g(x,0)}\psi(x)|_{g(x,0)}\leq C_{10}\frac{1}{a^{2}}\psi(x).
\end{array}
\label{18}%
\end{equation}
It follows from (\ref{19}), (\ref{20}) and (\ref{18}) that%
\begin{equation}%
\begin{array}
[c]{ll}
& \frac{\partial}{\partial t}\int_{M}\psi(x)e^{F(x,t)}dv\\
\geq & \int_{M}\left(  \Delta_{B,g^{T}(x,0)}F(x,t)-R^{T}(x,0)\right)
\psi(x)dv\\
= & \int_{M}F(x,t)\Delta_{g(x,0)}\psi(x)dv-\int_{M}R^{T}(x,0)\psi(x)dv\\
\geq & \frac{C_{10}}{a^{2}}\int_{M}F(x,t)\psi(x)dv-\int_{M}R^{T}%
(x,0)\psi(x)dv\\
\geq & \frac{C_{10}}{a^{2}}\int_{M}F(x,t)\psi(x)dv-\frac{C_{11}}%
{a^{1+\varepsilon}}\mathrm{Vol}(B_{0}(x_{0},a))
\end{array}
\label{21}%
\end{equation}
Integrating (\ref{21}) from $0$ to $t$, we have%
\[%
\begin{array}
[c]{ll}
& \int_{M}\psi(x)e^{F(x,t)}dv-\int_{M}\psi(x)e^{F(x,0)}dv\\
\geq & \frac{C_{10}}{a^{2}}\int_{0}^{t}\int_{M}F(x,s)\psi(x)dvds-\frac
{C_{11}t}{a^{1+\varepsilon}}Vol(B_{0}(x_{0},a))\\
\geq & \frac{C_{10}t}{a^{2}}\int_{M}F(x,t)\psi(x)dv-\frac{C_{11}%
t}{a^{1+\varepsilon}}\mathrm{Vol}(B_{0}(x_{0},a)),
\end{array}
\]
that is,
\[%
\begin{array}
[c]{c}%
\int_{M}\psi(x)(1-e^{F(x,t)})dv\leq\frac{C_{10}t}{a^{2}}\int_{M}%
(-F(x,t))\psi(x)dv+\frac{C_{11}t}{a^{1+\varepsilon}}Vol(B_{0}(x_{0},a)),
\end{array}
\]
where we using the monotonicity of $F(x,t)$. In the following, we using the
argument in \cite{cz}, we have
\[%
\begin{array}
[c]{c}%
F(x,t)\geq-C(1+t)^{\frac{1-\varepsilon}{1+\varepsilon}},
\end{array}
\]
for $t\in\lbrack0,t_{\max})$. In particular, if $\varepsilon=1$, we have
\[
F(x,t)\geq-C.
\]
This completes the proof of this Lemma.
\end{proof}

The proof of \ the long-time solution as in Theorem \ref{T1A}:

\begin{proof}
Let $g_{i\bar{j}}^{T}$ be the maximal solution\ :of the Sasaki-Ricci flow
(\ref{9}). It follows from Lemma \ref{22} that
\[%
\begin{array}
[c]{c}%
\frac{\text{\textrm{det}}(g_{i\bar{j}}^{T}(x,t))}{\text{\textrm{det}}%
(g_{i\bar{j}}^{T}(x,0))}\geq e^{-c(1+t)^{\frac{1-\varepsilon}{1+\varepsilon}}}%
\end{array}
\]
on $M^{2n+1}\times\lbrack0,t_{\max})$, which implies
\begin{equation}%
\begin{array}
[c]{c}%
g_{i\bar{j}}^{T}(x,0)\geq g_{i\bar{j}}^{T}\geq e^{-c(1+t)^{\frac
{1-\varepsilon}{1+\varepsilon}}}g_{i\bar{j}}^{T}(x,0)
\end{array}
\label{24}%
\end{equation}
on $M^{2n+1}\times\lbrack0,t_{\max})$.

Since the Sasaki-Ricci flow equation (\ref{9}) is the parabolic version of the
Monge-Amp\`{e}re equation on the Sasakian manifold, the inequality (\ref{24})
is corresponding to the second order estimate for the Monge-Amp\`{e}re
equation. By adapting the standard argument of Calabi and Yau's arguments for
Monge-Amp\`{e}re equation and also Shi's arguments in \cite{s2}, we can show
that the derivative and higher order estimates for $g_{i\bar{j}}^{T}(x,t)$ are
uniformly bounded on any finite time interval, this implies that the solution
$g_{i\bar{j}}^{T}(x,t)$ exists for all $t\in\lbrack0,\infty)$. This complete
the proof of the existence of long time solution.
\end{proof}

(II) In the following, we will prove that the persevere property for the
Sasaki-Ricci flow which is to say that, for any $t>0,$ we have
\[
Rm^{T}(U,\overline{U},V,\overline{V})\geq0
\]
for any $U,$ $V\in D_{p}\subset T_{p}M$ and all $p\in M$ if $Rm^{T}\geq0$ for
$t=0.$

\begin{proof}
Now we compute the evolution of the transverse curvature $Rm^{T}$ along the
Sasaki-Ricci flow, we have
\begin{equation}%
\begin{array}
[c]{c}%
\frac{\partial Rm^{T}}{\partial t}=\Delta_{B}Rm^{T}+F(Rm^{T}),
\end{array}
\label{26}%
\end{equation}
where in the local transverse holomorphic coordinates, and
\begin{equation}%
\begin{array}
[c]{lll}%
F(Rm^{T})_{i\bar{\imath}j\bar{j}} & = & \sum_{p,q}Rm_{i\bar{\imath}p\bar{q}%
}^{T}Rm_{q\bar{p}j\bar{j}}^{T}-\sum_{p,q}|Rm_{i\bar{p}j\bar{q}}^{T}|^{2}%
+\sum_{p,q}|Rm_{i\bar{j}p\bar{q}}^{T}|^{2}\\
&  & -\sum_{p}Re(Rm_{i\bar{p}}^{T}R_{p\bar{\imath}j\bar{j}}^{T}+Rm_{j\bar{p}%
}^{T}Rm_{i\bar{\imath}p\bar{j}}^{T}).
\end{array}
\label{25}%
\end{equation}
For any tensor $S$ which has the same type as $Rm^{T}$, we can also define
$F(S)$ as in (\ref{25}). As in the K\"{a}hler case, by using the argument as
in Mok \cite{m} and Bando \cite{b}, $F(S)$ satisfies the null vector property.
That is, if there exist two nonzero vectors $U,V\in D^{1,0}$ such that
\[
S_{p}\geq0\text{ \textrm{and}\ }S_{p}(U,\overline{U};V,\overline{V})=0.
\]
Then
\[
F_{p}(S)(U,\overline{U};V,\overline{V})\geq0.
\]
By using this plus with \cite[Proposition 1.]{b} where the Hamilton maximum
principle for tensors was using, we can show that the transverse bisectional
curvature is nonnegative for all $t$. The only thing difference is that the
auxiliary function should be a basic function in our Sasaki case. We refer to
\cite[Proposition 8.4.]{he} for some details.
\end{proof}

(III) In order to estimate: For any integer $m\geq0$, there is a constant $C$
such that%
\[%
\begin{array}
[c]{c}%
||\nabla^{m}Rm^{T}||^{2}(x,t)\leq\frac{C}{t^{1+m}},\text{\ \textrm{for all}
}t>0,x\in M.
\end{array}
\]
We rewrite the equation (\ref{26}) as following
\[%
\begin{array}
[c]{c}%
\frac{\partial}{\partial t}Rm^{T}=\Delta Rm^{T}+Rm^{T}\ast Rm^{T},
\end{array}
\]
where we use the fact $\nabla_{\xi}Rm^{T}=0$, and so we see the norm $|\nabla
Rm^{T}|$ agrees with the norm when we replace $g$ by $g^{T}$. With this mind,
we have
\[%
\begin{array}
[c]{c}%
\frac{\partial}{\partial t}|\nabla Rm^{T}|^{2}=\Delta|\nabla Rm^{T}%
|^{2}-2|\nabla^{2}Rm^{T}|^{2}Rm^{T}\ast(\nabla Rm^{T})^{\ast2}+(\nabla
Rm^{T})^{\ast2}%
\end{array}
\]
and%
\[%
\begin{array}
[c]{ccl}%
\frac{\partial}{\partial t}\left(  t|\nabla Rm^{T}|^{2}+\beta|Rm^{T}%
|^{2}\right)  & \leq & \Delta\left(  t|\nabla Rm^{T}|^{2}+\beta|Rm^{T}%
|^{2}\right) \\
&  & +(1+C_{3}t|Rm^{T}|+C_{4}t-2\beta)|\nabla Rm^{T}|^{2}+C_{5}|Rm^{T}|^{3}.
\end{array}
\]
Suppose that $|Rm^{T}|\leq K$. Set $2\beta\geq1+C_{3}t|Rm^{T}|+C_{4}t$, then
we have
\[%
\begin{array}
[c]{c}%
\frac{\partial}{\partial t}\left(  t|\nabla Rm^{T}|^{2}+\beta|Rm^{T}%
|^{2}\right)  -\Delta\left(  t|\nabla Rm^{T}|^{2}+\beta|Rm^{T}|^{2}\right)
\leq C_{5}K^{3}.
\end{array}
\]
By applying the maximum principle, we have
\[
\left(  t|\nabla Rm^{T}|^{2}+\beta|Rm^{T}|^{2}\right)  \leq\beta K^{2}%
+C_{5}\beta K^{3}t.
\]
Using the choice of $\beta,$ there exists a constant $C_{1}$ such that for any
$t\in(0,\frac{\alpha}{K})$%
\[
|\nabla Rm^{T}|\leq C_{1}t^{-\frac{1}{2}}\max\{\sqrt{K},K\}
\]
for $m=1.$ By induction on $m$, we have
\[%
\begin{array}
[c]{c}%
||\nabla^{m}Rm^{T}||(x,t)\leq\frac{C_{m}\max\{\sqrt{K},K\}}{t^{\frac{m}{2}}}%
\end{array}
\]
for any integer $m\geq0$.

(IV) From Lemma \ref{22}, we can obtain the result as follows.

\begin{corollary}
\label{C31}Let $(M^{2n+1},J,\theta)$ be a noncompact Sasakian $(2n+1)$%
-manifold with bounded and nonnegative transverse bisectional curvature and
$n\geq2$. Suppose for a fixed base point $x_{0}$,%
\[%
\begin{array}
[c]{c}%
\mathrm{Vol}(B_{\xi}(x_{0},r))\geq C_{7}r^{2n},\\
\frac{1}{\mathrm{Vol}(B_{\xi}(x,r))}\int_{B_{\xi}(x,r)}R^{T}(y)dy\leq
\frac{C_{8}}{1+r_{0}^{2}(x_{0},x)}.
\end{array}
\]
Let $g_{i\bar{j}}^{T}$ be the solution of the Sasaki-Ricci flow (\ref{9}) with
$g^{T}(x)$ as initial transverse K\"{a}hler metric, and let $\mathrm{Vol}%
_{t}(B_{\xi}(x_{0},r))$ be the volume of the transverse ball of radius $r$ and
centered at $x_{0}$ with respect to $g^{T}(x,t)$. Then there exists a positive
constant $\kappa$ such that
\[
\mathrm{Vol}_{t}(B_{\xi}(x_{0},r))\geq\kappa r^{2n}%
\]
for all $t\in\lbrack0,t_{\max})$ and $0\leq r\leq\infty.$
\end{corollary}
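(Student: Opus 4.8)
The plan is to deduce the statement directly from the uniform bound on $F(x,t)$ supplied by Lemma \ref{22} together with the shrinking of the metric along the flow. The decay hypothesis imposed here, namely $\frac{1}{\mathrm{Vol}(B_{\xi}(x,r))}\int_{B_{\xi}(x,r)}R^{T}(y)\,dy\le \frac{C_{8}}{1+r_{0}^{2}(x_{0},x)}$, is precisely the case $\varepsilon=1$ of Lemma \ref{22}; since the transverse bisectional curvature is assumed bounded we also have $R^{T}(x,0)\le C_{1}$ for some $C_{1}$, so both hypotheses of Lemma \ref{22} are met. Consequently Lemma \ref{22} yields a constant $C=C(n,C_{7},C_{8})$ with
\begin{equation}
F(x,t)=\log\frac{\det(g_{i\bar{j}}^{T}(x,t))}{\det(g_{i\bar{j}}^{T}(x,0))}\ge -C
\end{equation}
on $M\times\lbrack0,t_{\max})$.

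The second ingredient is that the evolving transverse volume element is related to the initial one by the factor $e^{F(x,t)}$. Under the Type II deformation one has $\omega(t)^{n}\wedge\eta(t)=e^{F(x,t)}\,\omega(0)^{n}\wedge\eta(0)$: the basic $1$-form $\eta(t)-\eta(0)=d_{B}^{c}\varphi$ wedged with the basic $2n$-form $\omega(t)^{n}$ vanishes for dimensional reasons, while $\omega(t)^{n}=e^{F(x,t)}\omega(0)^{n}$ by the definition of $F$. Hence the density of the transverse volume with respect to $g^{T}(x,t)$ is at least $e^{-C}$ times that with respect to $g^{T}(x,0)$.

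Next I would use that nonnegative transverse Ricci curvature, which is preserved by the flow as established in part (II) above, forces $g_{i\bar{j}}^{T}(x,t)\le g_{i\bar{j}}^{T}(x,0)$, so that lengths of paths, and therefore the transverse distance, are non-increasing in $t$: $d_{t}^{T}\le d_{0}^{T}$. This gives the nesting $B_{\xi,g^{T}(x,0)}(x_{0},r)\subseteq B_{\xi,g^{T}(x,t)}(x_{0},r)$ of the metric balls. Combining the three observations,
\begin{equation}
\mathrm{Vol}_{t}(B_{\xi}(x_{0},r))\ge \int_{B_{\xi,g^{T}(x,0)}(x_{0},r)}\omega(t)^{n}\wedge\eta(t)\ge e^{-C}\,\mathrm{Vol}_{0}(B_{\xi,g^{T}(x,0)}(x_{0},r))\ge e^{-C}C_{7}\,r^{2n},
\end{equation}
so the conclusion holds with $\kappa=e^{-C}C_{7}$, uniformly in both $t\in\lbrack0,t_{\max})$ and $r$.

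The argument is essentially an assembly of facts already in hand, so the only delicate point is the bookkeeping: verifying that the present hypotheses genuinely reduce to the $\varepsilon=1$ instance of Lemma \ref{22} (in particular matching the radial decay of the scalar-curvature average to the form required there), and confirming that it is the transverse volume form, rather than the full Sasaki volume, that transforms by exactly $e^{F}$. Once these are checked the volume comparison is immediate, and I expect no further obstacle.
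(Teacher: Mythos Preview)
Your argument is correct and in fact cleaner than the paper's. Both proofs rest on the same three ingredients---Lemma \ref{22}, the shrinking of the transverse metric under the flow (hence $B_{\xi,0}(x_0,r)\subset B_{\xi,t}(x_0,r)$), and the identity $dV_t=e^{F(x,t)}dV_0$---but they assemble them differently. You invoke directly the pointwise conclusion $F(x,t)\ge -C$ of Lemma \ref{22} in the $\varepsilon=1$ case, which bounds the volume density from below and yields $\mathrm{Vol}_t(B_{\xi,t}(x_0,r))\ge e^{-C}C_7 r^{2n}$ for every $r$ at once. The paper instead reaches back into the proof of Lemma \ref{22} for the integrated estimate \eqref{30} on $\int(e^{F}-1)\psi\,dV_0$, passes to the limit $r\to\infty$ to control only the asymptotic volume ratio, and must then implicitly appeal to Bishop--Gromov (legitimate since $\mathrm{Ric}^T\ge 0$ is preserved) to recover the bound at finite radii. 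Your route avoids both the limit and that implicit comparison step; the only cost is that the constant $\kappa=e^{-C}C_7$ may be slightly worse than the paper's, which is immaterial here.
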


\begin{proof}
We have known that the transverse Ricci curvature of the solution $g_{i\bar
{j}}^{T}(x,t)$ is nonnegative, so that the metric is shrinking under that flow
(\ref{9}). Since the metric $g(x,t)$ is given by
\begin{equation}
g(x,t)=g^{T}(x,t)+\widetilde{\eta}\otimes\widetilde{\eta} \label{36}%
\end{equation}
with $\widetilde{\eta}=\eta+d_{B}^{c}\varphi$ and $\widetilde{\eta}(\xi)=1$
for the fixed $\xi$, we have
\begin{equation}%
\begin{array}
[c]{lll}%
\mathrm{Vol}_{t}(B_{\xi,t}(x_{0},r)) & \geq & \mathrm{Vol}_{t}(B_{\xi,0}%
(x_{0},r))\\
& = & \int_{B_{\xi,0}(x_{0},r)}e^{F(x,t)}dV_{0}\\
& = & \mathrm{Vol}(B_{\xi,0}(x_{0},r))+\int_{B_{\xi,0}(x_{0},r)}\left(
e^{F(x,t)}-1\right)  dV_{0}%
\end{array}
\label{29}%
\end{equation}
It follows from Lemma \ref{22} that
\begin{equation}%
\begin{array}
[c]{lll}%
\int_{B_{\xi,0}(x_{0},r)}\left(  e^{F(x,t)}-1\right)  dV_{0} & \geq &
C_{13}\int_{M}\left(  e^{F(x,t)}-1\right)  \psi(x)dV_{0}\\
& \geq & C_{14}t[\frac{F_{min}(t)}{r^{2}}-\frac{1}{r^{2}}]\mathrm{Vol}%
(B_{\xi,0}(x_{0},r))
\end{array}
\label{30}%
\end{equation}
for some positive constants $C_{13},C_{14}$ depending on $C_{1},C_{12}$ and
$n$. By (\ref{29}) and (\ref{30}),
\[%
\begin{array}
[c]{c}%
\lim_{r\rightarrow\infty}\frac{\mathrm{Vol}_{t}(B_{\xi,t}(x_{0},r))}{r^{2n}%
}\geq\lim_{r\rightarrow\infty}\frac{\mathrm{Vol}_{t}(B_{\xi,0}(x_{0}%
,r))}{r^{2n}}\geq C_{1}.
\end{array}
\]
This completes this Corollary.
\end{proof}

Now we are ready to prove $(3)$ of Theorem \ref{T1A}.

By using trace of Li-Yau-Hamilton matrix type inequality (\ref{09}) for the
transverse scalar curvature%
\[%
\begin{array}
[c]{c}%
\frac{\partial R^{T}}{\partial t}-\frac{|\nabla^{T}R^{T}|^{2}}{R^{T}}%
+\frac{R^{T}}{t}\geq0,
\end{array}
\]
it follows that $tR^{T}(\cdot,t)$ is nondecreasing in time. It then follows
from (\ref{8}) that for $x\in M$ and $t\in\lbrack0,+\infty)$,
\begin{equation}%
\begin{array}
[c]{lll}%
-F(x,2t) & = & \int_{t}^{2t}R^{T}(x,s)ds\geq\int_{t}^{2t}R^{T}(x,s)ds\\
& \geq & tR^{T}\int_{t}^{2t}\frac{1}{s}ds=\left(  \log2\right)  \cdot
tR^{T}(x,t).
\end{array}
\label{31}%
\end{equation}
By using the formula (\ref{32}), we have
\begin{equation}%
\begin{array}
[c]{l}%
R^{T}(x,t)\leq\frac{C_{15}}{(1+t)}%
\end{array}
\label{33}%
\end{equation}
for some positive constant $C_{15}$ depending only on $\varepsilon,C_{1}%
,C_{2}$ and $n$. By Corollary \ref{C31}, we have known that the maximal volume
growth condition is preserved under the Sasaki-Ricci flow (\ref{9}). That is
\[
\mathrm{Vol}_{t}(B_{\xi,t}(x_{0},r))\geq C_{1}r^{2n}%
\]
for all $t\in\lbrack0,t_{\max})$ and $0<r\leq\infty.$ Since (\ref{36}), it
follows that
\[
\mathrm{Vol}(B_{t}(x_{0},r))\geq Cr^{2n+1}.
\]
By the local injective radius estimate of Cheeger-Gromov-Taylor (\cite{cgt}),
we can obtain that
\begin{equation}%
\begin{array}
[c]{l}%
\mathrm{inj}_{(M,g_{i\bar{j}})}(p)\geq C>0.
\end{array}
\label{35}%
\end{equation}
By the standard scaling argument, it follows from (\ref{33}) and (\ref{35}),
we have the following
\begin{equation}%
\begin{array}
[c]{l}%
\mathrm{inj}(M,g_{i\overline{j}}(\cdot,t))\geq C_{2}t^{\frac{1}{2}}%
\end{array}
\label{37}%
\end{equation}
for $t\geq1$.

Finally, we will give the Proof of Theorem \ref{T2B}\textbf{: }

\begin{proof}
By using assumption and Proposition \ref{P1A}, there exists a nonconstant
basic CR holomorphic function $f$ of polynomial growth of degree at most $d$.
Set
\[
u(x)=\log\left(  1+|f(x)|^{2}\right)  .
\]
Then we have the basic CR plurisubharmonic function which satisfies the
following:
\[%
\begin{array}
[c]{c}%
\sqrt{-1}\partial_{B}\overline{\partial}_{B}u\geq0
\end{array}
\]
and
\[
u(x)\leq C\left(  2+\log r(x)\right)  .
\]
Now using $u(x)$ as a weighted function, by the $L^{2}$-estimate
(\cite[Proposition 2.1.]{chll}), we have a nontrivial basic section $S^{T}%
\in\emph{O}_{d}(M,K_{M}^{T})$ for some $d$. Then by the CR Poincare-Lelong
formula
\[%
\begin{array}
[c]{c}%
\sqrt{-1}\partial_{B}\overline{\partial}_{B}||s^{T}||_{h}^{2}=-c_{1}^{B}%
(L^{T},h)+[Z_{s^{T}}]
\end{array}
\]
for any basic CR-holomorphic line bundle $(L^{T},h)$. In particular for
$\rho^{T}=-\sqrt{-1}R_{ij}^{T}dz^{i}d\overline{z}^{j}$, we have
\[%
\begin{array}
[c]{c}%
\sqrt{-1}\partial_{B}\overline{\partial}_{B}\log||s^{T}||^{2}=-\rho^{T}+\cdots
\end{array}
\]
Furthermore, we have
\begin{equation}%
\begin{array}
[c]{c}%
\Delta\log||s^{T}||^{2}\geq g^{i\overline{j}}R_{i\overline{j}}^{T}=R^{T}\geq0.
\end{array}
\label{2}%
\end{equation}
Now by Lemma 4.1 and Lemma 4.2 of \cite{chl1}, we can solve the CR heat
equation
\[%
\begin{array}
[c]{c}%
\begin{cases}
(\frac{\partial}{\partial t}-\Delta)v(x,t)=0\\
v(x,0)=2\log||s^{T}||^{2}%
\end{cases}
\end{array}
\]
and we have
\[%
\begin{array}
[c]{c}%
\lim{}_{t\rightarrow\infty}\sup\frac{v(x,t)}{\log t}\leq d.
\end{array}
\]
Moreover, for $w(x,t)=\frac{\partial}{\partial t}v(x,t)$ which is
\begin{equation}%
\begin{array}
[c]{c}%
w(x,t)=\int_{M}H(x,y,t)\left(  \Delta\log||s^{T}||(y)\right)  dy,
\end{array}
\label{3}%
\end{equation}
so that we have
\[%
\begin{array}
[c]{c}%
\begin{cases}
(\frac{\partial}{\partial t}-\Delta)w(x,t)=0\\
w(x,0)=2\log||s^{T}||,
\end{cases}
\end{array}
\]
where $s^{T}\in\emph{O}_{d}^{CR}(M,K_{M}^{T})$. Finally, by (\ref{1}) and the
proof as in \cite[Theorem 4.1.]{chl1} and the moment-type estimate of
\cite{cf}, we come out with
\[%
\begin{array}
[c]{c}%
\frac{\partial}{\partial t}\left(  tw(x,t)\right)  \geq0
\end{array}
\]
and
\begin{equation}%
\begin{array}
[c]{c}%
\lim_{t\rightarrow\infty}tw(x,t)\leq Cd.
\end{array}
\label{4}%
\end{equation}
On the other hand, it follows from (\ref{2}) and (\ref{3}) that
\[%
\begin{array}
[c]{c}%
w(x,t)\geq\int_{M}H(x,y,t)R^{T}(y)dy.
\end{array}
\]
By using (4.6) in \cite{chl1} and the CR heat kernel estimate as Proposition
3.1 in \cite{cchl}, we have
\begin{equation}%
\begin{array}
[c]{c}%
\frac{C}{\mathrm{Vol}(B_{\xi}(x,\sqrt{t}))}\int_{B_{\xi}(x,\sqrt{t})}^{T}%
R^{T}(y)dy\leq w(x,t).
\end{array}
\label{5}%
\end{equation}
By using (\ref{4}) and (\ref{5}), we have
\[%
\begin{array}
[c]{c}%
\frac{1}{\mathrm{Vol}(B_{\xi}(x,r))}\int_{B_{\xi}(x,r)}R^{T}(y)dy\leq\frac
{C}{1+r^{2}}.
\end{array}
\]
This completes the proof of the Theorem.
\end{proof}

\section{CR Yau Uniformization Conjecture}

In the following, we prove the main result of the present paper.

The Proof of Theorem \ref{T2A}: (1) It follows from the injectivity radius
estimate (\ref{37}) that the exponential map gives a diffeomorphism between
the balls of $M$ and the Euclidean space as in (2) below. The key is to modify
the exponential maps to become CR biholomorphims. By using the foliation chart
system on $M$, on each $V_{\alpha}$, for a fixed $p_{0}\in M$ and a fixed $t$,
we have
\[%
\begin{array}
[c]{c}%
d\pi_{\alpha}:D_{p_{0}}\rightarrow T_{\pi_{\alpha}(p_{0})}V_{\alpha}%
\cong\mathbb{C}^{n}.
\end{array}
\]
Since $\Phi(Z_{i})=\sqrt{-1}Z_{i}$ and $\Phi(\overline{Z}_{i})=-\sqrt
{-1}\overline{Z}_{i}$, we can define $e_{i}=Z_{i}+\overline{Z}_{i}$ and then
$\Phi(e_{i})=\sqrt{-1}(Z_{i}-\overline{Z}_{i})$ such that $\left\{  e_{1}%
,\Phi(e_{i}),\cdots,e_{n},\Phi(e_{n})\right\}  $ form an orthonormal frame on
$D_{p_{0}}$. For any $v\in D_{p_{0}}$, we have
\[
v=x_{1}e_{1}+y_{1}\Phi(e_{1})+\cdots+x_{n}e_{n}+y_{n}\Phi(e_{n}).
\]
Now we can define the real linear isomorphism $L:D_{p_{0}}\rightarrow
\mathbb{C}^{n}$ by
\[
L(v)=\left(  z_{1},z_{2},\cdots,z_{n}\right)  \in\mathbb{C}^{n},
\]
where $z_{i}=x_{i}+\sqrt{-1}y_{i}$, $i=1,2,\cdots,n$. As in \cite{co}, we
define the geodesic tube ball around the $P=\overline{\text{orb}_{\xi}p_{0}}$
of the radius $r$ is the set
\[%
\begin{array}
[c]{c}%
T(P,r)=\cup_{p\in P}\{\text{exp}_{p}(v)|v\in T_{p}P^{\bot},||v||<r\}
\end{array}
\]
which is the same as
\[
T(P,r)=B_{\xi}(p_{0},r),
\]
if $r<\mathrm{inj}(M)$. Now we use $\text{exp}_{p_{0}}^{T}$ to denote the
transverse exponential map with respect to the transverse K\"{a}hler metric
$g_{i\overline{j}}^{T}$. By the injectivity estimate, we can know that the
map
\[%
\begin{array}
[c]{c}%
\varphi_{t}=\exp_{p_{0}}^{T}\circ L^{-1}:\widehat{B}(0,C(t+1)^{\frac{1}{2}%
})\subset\mathbb{C}^{n}\rightarrow B_{\xi}(p_{0},C(t+1)^{\frac{1}{2}})\subset
M
\end{array}
\]
is a diffeomorphism, where $\widehat{B}(0,C(1+t)^{\frac{1}{2}})$ is the
standard ball in $\mathbb{C}^{n}$ and $B_{\xi}(p_{0},C(1+t)^{\frac{1}{2}})$ is
the transverse ball of $M$ with respect to the transverse K\"{a}hler metric
$g_{i\overline{j}}^{T}$. We consider the pull back metric in real coordinates
$g_{ij}^{\ast T}(x,t)$ and complex coordinates $g_{AB}^{\ast T}(x,t)$,
\[%
\begin{array}
[c]{c}%
\varphi_{t}^{\ast}(g_{ij}^{T}(x,t))=g_{ij}^{\ast T}(x,t)dx_{i}dx_{j}%
=g_{AB}^{\ast T}(x,t)dz^{A}dz^{B}%
\end{array}
\]
on $\widehat{B}(0,C(1+t)^{\frac{1}{2}})$, where $A,B=\alpha$ or $\overline
{\alpha}$ $(\alpha=1,2,\cdots,n)$. Since $\varphi_{t}$ is not CR-holomorphic
in general, the transverse metric $g_{AB}^{\ast T}(\cdot,t)$ is not Hermitian
for the standard complex structure in $\widehat{B}(0,C(1+t)^{\frac{1}{2}%
})\subset\mathbb{C}^{n}$. We need the following Lemma which is due to Hamilton
\cite[Theorem 4.10.]{h2}.

\begin{lemma}
Suppose the metric $g_{ij}dx^{i}dx^{j}$ is in geodesic coordinates. Suppose
the Riemannian curvature $Rm$ is bounded between $-B_{0}$ and $B_{0}$. Then
there exist positive constants $c,C_{0}$ depending only on the dimension such
that for any $|x|\leq\frac{c}{\sqrt{B_{0}}}$, the following holds
\[%
\begin{array}
[c]{c}%
|g_{ij}-\delta_{ij}|\leq C_{0}B_{0}|x|^{2}.
\end{array}
\]
Furthermore, if in addition $|\nabla Rm|\leq B_{0}$ and $|\nabla^{2}Rm|\leq
B_{0}$, then
\[%
\begin{array}
[c]{c}%
|\frac{\partial}{\partial x^{j}}g_{kl}|\leq C_{0}B_{0}|x|,\text{ \textrm{and}%
}\ |\frac{\partial^{2}}{\partial x^{i}\partial x^{j}}g_{kl}|\leq C_{0}B_{0}%
\end{array}
\]
for any $|x|\leq\frac{x}{\sqrt{B_{0}}}$.
\end{lemma}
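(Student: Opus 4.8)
The plan is to encode the metric in geodesic normal coordinates through Jacobi fields along the radial geodesics from the center $p$, and to control these Jacobi fields by Gronwall-type comparison using the curvature bound $|Rm|\le B_{0}$. Identify $T_{p}M$ with $\mathbb{R}^{n}$ by an orthonormal basis $\{e_{i}\}$, so that a point $x$ has coordinates $(x^{1},\dots,x^{n})$ and the radial geodesic in its direction is $\gamma(s)=\exp_{p}(sx)$, with $|\gamma'|\equiv|x|$. Recall that the differential of the exponential map is realized by Jacobi fields, $(d\exp_{p})_{x}(e_{i})=J_{i}(1)$, where $J_{i}$ is the Jacobi field along $\gamma$ with $J_{i}(0)=0$ and $\frac{D}{ds}J_{i}(0)=e_{i}$. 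Hence the normal-coordinate metric is
\[
g_{ij}(x)=\langle J_{i}(1),J_{j}(1)\rangle ,
\]
and the whole estimate reduces to controlling the $J_{i}$ and their dependence on the initial direction $x$.

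First I would establish the $C^{0}$ bound. In a parallel orthonormal frame along $\gamma$ the Jacobi equation $\frac{D^{2}}{ds^{2}}J_{i}+Rm(J_{i},\gamma')\gamma'=0$ becomes a linear system whose zeroth-order coefficients are bounded by $B_{0}|x|^{2}$. Writing $h_{i}(s)=J_{i}(s)-s\,e_{i}$, we have $h_{i}(0)=0$, $h_{i}'(0)=0$ and $\frac{D^{2}}{ds^{2}}h_{i}=-Rm(J_{i},\gamma')\gamma'$, so $|\frac{D^{2}}{ds^{2}}h_{i}|\le B_{0}|x|^{2}\,|J_{i}|$. A Gronwall argument on $[0,1]$—legitimate once $B_{0}|x|^{2}\le c^{2}$, i.e.\ $|x|\le c/\sqrt{B_{0}}$, so that $|J_{i}|\le 2$—yields $|h_{i}(1)|\le C B_{0}|x|^{2}$ and thus $|J_{i}(1)-e_{i}|\le C B_{0}|x|^{2}$. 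Expanding $g_{ij}=\langle e_{i}+h_{i}(1),e_{j}+h_{j}(1)\rangle$ and using $\langle e_{i},e_{j}\rangle=\delta_{ij}$ gives $|g_{ij}-\delta_{ij}|\le C_{0}B_{0}|x|^{2}$, with $c,C_{0}$ depending only on $n$.

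For the derivative estimates I would differentiate the representation $g_{ij}(x)=\langle J_{i}(1),J_{j}(1)\rangle$ in the base variables $x^{k}$. Each such differentiation amounts to varying the geodesic direction, so $\partial_{k}J_{i}$ solves the Jacobi equation differentiated once, whose inhomogeneous term is governed by $\nabla Rm$; invoking $|\nabla Rm|\le B_{0}$ and repeating the Gronwall scheme bounds $\partial_{k}g_{ij}$ by $C_{0}B_{0}|x|$, the gain of one power of $|x|$ reflecting the vanishing of $\partial g$ at the origin in normal coordinates. Differentiating a second time produces an equation whose inhomogeneity involves $\nabla^{2}Rm$, and the bound $|\nabla^{2}Rm|\le B_{0}$ then gives $|\partial_{i}\partial_{j}g_{kl}|\le C_{0}B_{0}$.

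The hard part is the derivative estimates: one must differentiate the exponential map with respect to its base point, i.e.\ track the second- and third-variation fields obtained by differentiating the Jacobi equation, and then convert covariant derivatives along $\gamma$ into coordinate partials of $g_{ij}$. Uniformity of $c$ and $C_{0}$ over the entire ball $|x|\le c/\sqrt{B_{0}}$ hinges on applying every Gronwall estimate at the correct scale, measuring smallness against $B_{0}|x|^{2}$ rather than $|x|$ alone; this is exactly what pins the admissible radius to $c/\sqrt{B_{0}}$ and keeps all constants dimensional. This recovers Hamilton's Theorem 4.10 in \cite{h2}, which we have invoked for completeness.
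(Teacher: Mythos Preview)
The paper does not prove this lemma; it merely quotes it as Hamilton's result \cite[Theorem~4.10]{h2} and uses it as a black box. Your Jacobi-field representation $g_{ij}(x)=\langle J_i(1),J_j(1)\rangle$ together with the Gronwall comparison on $h_i=J_i-s\,e_i$ is the correct and standard route, and it is essentially Hamilton's own argument, so there is no genuine discrepancy to discuss.

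One small point worth tightening: when you differentiate in the base variable $x^{k}$, the variation of the geodesic direction produces, in addition to the $\nabla Rm$ term you mention, a term of the form $Rm(J_i,\partial_k\gamma')\gamma'+Rm(J_i,\gamma')\partial_k\gamma'$ coming from the dependence of $\gamma'$ on $x$; these are controlled by the already-established $C^0$ bound on $Rm$ and do not require $\nabla Rm$. The $|\nabla Rm|$ bound enters only through the genuine first-variation term $(\nabla_{\partial_k\gamma}Rm)(J_i,\gamma')\gamma'$. This does not affect the conclusion, but making the bookkeeping explicit is exactly the ``hard part'' you flag, and it is where Hamilton's proof spends its effort.
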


By the assumption and the above lemma, we can obtain some positive constants
$\overline{c},C$, and get the following inequalities for the evolving
transverse metric $g_{\alpha\overline{\beta}}^{\ast T}$,
\begin{align*}
|g_{\alpha\overline{\beta}}^{\ast T}(z,t)-\delta_{\alpha\beta}|_{t}  &  \leq
C|z|^{2}(1+t)^{-1}\\
|g_{\overline{\alpha}\beta}^{\ast T}(z,t)-\delta_{\alpha\beta}|_{t}  &  \leq
C|z|^{2}(1+t)^{-1}\\
|g_{\alpha\beta}^{\ast T}(z,t)|_{t}  &  \leq C|z|^{2}(1+t)^{-1}\\
|g_{\overline{\alpha}\overline{\beta}}^{\ast T}(z,t)|_{t}  &  \leq
C|z|^{2}(1+t)^{-1}\\
|\widehat{\nabla}g_{AB}^{\ast T}(z,t)|_{t}  &  \leq C|z|^{2}(1+t)^{-1}\\
|\widehat{\nabla}\widehat{\nabla}g_{AB}^{\ast T}(z,t)|_{t}  &  \leq
C(1+t)^{-1}%
\end{align*}
where $z\in\widehat{B}(0,C(1+t)^{\frac{1}{2}})$, and $t\in\lbrack1,\infty)$,
and $|\cdot|_{t}$ is the norm with respect to the transverse metric
$g_{ij}^{\ast T}(z,t)$. Let $\varphi_{t}^{\ast}\Phi$ and $\overline{\partial
}_{t}=\varphi_{t}^{\ast}(\overline{\partial}_{B})$ be the pull back complex
structure and $\overline{\partial}_{B}$-operator of $M$ on $B(0,C(1+t)^{\frac
{1}{2}})$. We can know that $\varphi_{t}$ is CR-holomorphic with respect to
$\varphi_{t}^{\ast}\Phi$. But the functions $z^{\alpha}$ $(\alpha
=1,2,\cdots,n)$ are not holomorphic with respect to $\varphi_{t}^{\ast}\Phi$.
This is just indicates the difference of $\varphi_{t}^{\ast}\Phi$ with the
standard complex structure $J_{\mathbb{C}^{n}}$ on $\hat{B}(0,C(1+t)^{\frac
{1}{2}})$. Now we can estimate the difference between them like the method in
\cite{cz}. We denote $\{x^{1},\cdots,x^{n},x^{n+1},\cdots,x^{2n}%
\}=\{x^{1},\cdots,x^{n},y^{1},\cdots,y^{n}\}$ as real coordinates for $\hat
{B}(0,C(1+t)^{\frac{1}{2}})$. Set
\[%
\begin{array}
[c]{c}%
\varphi_{t}^{\ast}\Phi=\sum_{i,j=1}^{2n}\Phi_{j}^{i}\frac{\partial}{\partial
x_{i}}\otimes dx^{j}%
\end{array}
\]
and
\[%
\begin{array}
[c]{c}%
J_{C^{n}}=\sum_{i.j=1}^{2n}J_{j}^{i}\frac{\partial}{\partial x_{i}}\otimes
dx^{j},
\end{array}
\]
where $\Phi_{j}^{i}$ is the representation of $\Phi$ in the normal coordinate
at $x_{0}$, and $\Phi_{j}^{i}(0,t)=J_{j}^{i}(0)$.\ Let $\nabla^{t}$ and
$\Gamma_{ij}^{tk}$ be the covariant derivative and Christoffel symbols with
respect to the pull back metric $g_{ij}^{\ast T}$ of $g_{\alpha\beta}^{\ast
T}$ on $B(0,C(1+t)^{\frac{1}{2}})$. Set
\[%
\begin{array}
[c]{c}%
H_{k}^{j}=J_{j}^{i}-\Phi_{k}^{j}.
\end{array}
\]
By computation, we have
\[%
\begin{array}
[c]{c}%
\sum_{i}x^{i}\nabla_{\frac{\partial}{\partial x_{i}}}^{t}[\sum_{j,k}H_{k}%
^{j}\frac{\partial}{\partial x_{j}}\otimes dx^{k}]=x^{i}\Gamma_{ip}^{tj}%
J_{k}^{p}\frac{\partial}{\partial x_{j}}\otimes dx^{k}-x^{i}\Gamma_{ik}%
^{tp}J_{p}^{j}\frac{\partial}{\partial x_{j}}\otimes dx^{k}.
\end{array}
\]
That is
\[%
\begin{array}
[c]{c}%
x^{i}\nabla_{\frac{\partial}{\partial x^{i}}}H_{k}^{j}=x^{i}\Gamma_{ip}%
^{tj}J_{k}^{p}-x^{i}\Gamma_{ik}^{tp}J_{p}^{j}.
\end{array}
\]
From the Gauss Lemma, we
\[
g_{ij}^{\ast T}x^{i}=\delta_{ij}x^{i}%
\]
on $\hat{B}(0,C(1+t)^{\frac{1}{2}})$. As \cite{h2} and \cite{cz}, we define
the symmetric tensor
\[%
\begin{array}
[c]{c}%
A_{ij}=\frac{1}{2}x^{k}\frac{\partial}{\partial x^{k}}g_{ij}^{\ast T}.
\end{array}
\]
Then we can have
\[%
\begin{array}
[c]{c}%
x^{j}\Gamma_{jk}^{ti}=g^{\ast Til}A_{kl}.
\end{array}
\]
So we can obtain the following
\[%
\begin{array}
[c]{c}%
|x^{i}\nabla_{\frac{\partial}{\partial x_{i}}}^{t}H_{k}^{j}|_{t}\leq
C|A_{ij}|_{t}%
\end{array}
\]
on $\hat{B}(0,C(1+t)^{\frac{1}{2}})$, where $C$ is a positive constant
depending only on $n$.

If the transversal metric $g_{ij}^{T}$ has $|Rm^{T}|\leq B_{0}$ in the
geodesic ball of radius $r\leq\frac{c}{\sqrt{B_{0}}}$, there exists $C<\infty$
such that
\[
|A_{ij}|\leq CB_{0}r^{2}.
\]
Then
\begin{equation}%
\begin{array}
[c]{c}%
|x^{i}\nabla_{\frac{\partial}{\partial x_{i}}}H_{k}^{j}|\leq C|x|^{2}%
(1+t)^{-\frac{1}{2}}%
\end{array}
\label{66}%
\end{equation}
on $\hat{B}(0,C(1+t)^{\frac{1}{2}})$ for some positive constant $C$ depending
only on $n$. Set
\[%
\begin{array}
[c]{c}%
M(r)=\sup_{\{|x|\leq r\}}|H_{k}^{j}|_{t}.
\end{array}
\]
By using (\ref{66}), we can obtain that
\[
M(r)\leq Cr^{2}(1+t)^{-1}%
\]
on $\hat{B}(0,C(1+t)^{\frac{1}{2}})$. Since $z^{\alpha}$ $(\alpha
=1,2,\cdots,n)$ are holomorphic with respect to $J_{\mathbb{C}^{n}}$, we can
obtain that%
\begin{equation}%
\begin{array}
[c]{lll}%
|\overline{\partial}_{B}^{t}z^{\alpha}|_{t} & \leq & \sum_{i}|\frac{\partial
}{\partial x^{i}}z^{\alpha}+\sqrt{-1}\varphi_{t}^{\ast}\Phi(\frac{\partial
}{\partial x^{i}})z^{\alpha}|\\
& \leq & \sum_{i}|\frac{\partial}{\partial x^{i}}z^{\alpha}+\sqrt{-1}J_{C^{n}%
}(\frac{\partial}{\partial x^{i}})z^{\alpha}|+\sum_{i}|(\varphi_{t}^{\ast}%
\Phi-J_{C^{n}})(\frac{\partial}{\partial x^{i}})z^{\alpha}|\\
& = & \sum_{i}|(\varphi_{t}^{\ast}\Phi-J_{C^{n}})(\frac{\partial}{\partial
x^{i}})z^{\alpha}|\\
& \leq & C|z|^{2}(1+t)^{-1},
\end{array}
\label{7}%
\end{equation}
where $C$ is a positive constant depending only on $n$. For any fixed $t\geq
1$, we consider the $\bar{\partial}$-equation
\begin{equation}
\bar{\partial}\zeta^{\alpha}=\bar{\partial}z^{\alpha}\label{88}%
\end{equation}
on $\hat{B}(0,r(t))$, where $r(t)=(\frac{C}{2})^{\frac{1}{2}}(1+t)^{\frac
{1}{4}}$. By (\ref{7}), we obtain
\[%
\begin{array}
[c]{c}%
|\bar{\partial}^{t}z^{\alpha}|_{t}\leq\widetilde{C}\frac{C}{2}(1+t)^{-\frac
{1}{2}}%
\end{array}
\]
on $\hat{B}(0,r(t))$. By using $L^{2}$ estimate theory for $\bar{\partial}%
$-operator, we know that the equation (\ref{88}) has smooth solutions
$\{\zeta^{\alpha}(z,t)|$ $\alpha=1,2,\cdots,n\}$ with the following
properties
\[%
\begin{array}
[c]{c}%
|\zeta^{\alpha}(z,t)|\leq\frac{C}{r(t)}%
\end{array}
\]
and
\[%
\begin{array}
[c]{c}%
|\widehat{\nabla}\zeta^{\alpha}|\leq\frac{C}{r^{2}(t)}%
\end{array}
\]
on $\hat{B}(0,r(t))$. From the equation (\ref{88}), we can have a holomorphic
\[%
\begin{array}
[c]{c}%
\Psi_{t}=(\Psi_{t}^{1},\cdots,\Psi_{t}^{n}):(\widehat{B}(0,r(t)),\varphi
_{t}^{\ast}\Phi)\rightarrow(\mathbb{C}^{n},J_{\mathrm{can}})
\end{array}
\]
with
\[
\Psi_{i}^{\alpha}=z^{\alpha}-\zeta^{\alpha}(x,t).
\]
When $t\rightarrow\infty,$ $\Psi_{t}$ is a diffeomorphism and
\[
\Psi_{t}\circ\varphi_{t}^{-1}:B_{\xi}(p_{0},r(t))\rightarrow\mathbb{C}^{n}%
\]
is a holomorphic and injective map and the image of $B_{\xi}(p_{0},r(t))$
contains the Euclidean ball $\widehat{B}(0,r(t))\subset\mathbb{C}^{n}.$
Finally, we denote
\[%
\begin{array}
[c]{c}%
\Omega_{t}=(\Psi_{t}\circ\varphi_{t}^{-1})^{-1}(\widehat{B}(0,\frac{1}%
{2}r(t))).
\end{array}
\]
Then for any $t,$
\[%
\begin{array}
[c]{c}%
B_{\xi}^{0}(p_{0},\frac{1}{4}r(t))\subset B_{\xi}^{t}(p_{0},\frac{1}%
{4}r(t))\subset\Omega_{t}.
\end{array}
\]
Hence there exists a sequence of $t_{k}\rightarrow\infty$ such that
$\Omega_{t_{1}}\subset\Omega_{t_{2}}\subset\cdots.$ Then by gluing argument of
Shi \cite{s2}, we have the bi-holomorphic map from $\cup\Omega_{t_{k}}$ to a
pseudoconvex domain $\Omega$ of $\mathbb{C}^{n}.$

We observe that $\{Z_{\alpha}=\frac{\partial}{\partial z^{\alpha}}+\sqrt
{-1}\overline{z}^{\alpha}\frac{\partial}{\partial s}\}_{\alpha=1}^{n}$ is
exactly a local frame in the $(2n+1)$-dimensional Heisenberg group
$\mathbb{H}_{n}=\mathbb{C}^{n}\times\mathbb{R}$ as the standard contact
Euclidean space $(\mathbb{R}^{2n+1},\eta_{\mathrm{can}},\Phi,\xi
,g_{\mathrm{can}})$ with coordinates $(x_{1},...,x_{n},y_{1},...,y_{n},s)$.
Take is a pseudohermitian contact structure on $\mathbb{H}_{n}$ as the contact
$1$-form
\begin{equation}
\eta_{\mathrm{can}}=\frac{1}{2}ds-\frac{1}{4}\sum_{i=1}^{n}(y_{i}dx_{i}%
-x_{i}dy_{i})=\frac{1}{2}(ds+\sqrt{-1}\sum\left(  z^{\alpha}d\overline
{z}^{\alpha}-\overline{z}^{\alpha}dz^{\alpha}\right)  )\label{2026A}%
\end{equation}
with the Reeb vector field $\xi=2\frac{\partial}{\partial s}$, the associated
Sasaki metric
\begin{equation}
g_{\mathrm{can}}=\frac{1}{4}\sum_{i=1}^{n}(dx_{i}^{2}+dy_{i}^{2}%
)+\eta_{\mathrm{can}}\otimes\eta_{\mathrm{can}}\label{2026B}%
\end{equation}
and
\[
\Phi=\sum_{i=1}^{n}(-dx_{i}\otimes\frac{\partial}{\partial y_{i}}%
+dy_{i}\otimes\frac{\partial}{\partial x_{i}}+\frac{x_{i}}{2}dx_{i}%
\otimes\frac{\partial}{\partial z}+\frac{y_{i}}{2}dy_{i}\otimes\frac{\partial
}{\partial z})
\]
with an orthonormal frame $\{E_{i}\}_{i=1}^{2n+1}$:
\begin{align*}
&  E_{i}=2\frac{\partial}{\partial x_{i}}+y_{i}\frac{\partial}{\partial s};\\
&  E_{n+i}=-2\frac{\partial}{\partial y_{i}}+x_{i}\frac{\partial}{\partial
s};\\
&  E_{2n+1}=\xi=2\frac{\partial}{\partial s}%
\end{align*}
such that $\Phi(E_{i})=E_{n+i}$ and $\Phi(\xi)=0$.

Therefore, by defining along the corresponding geodesic integral curve with
respect to the Killing Reeb vector field $\xi=2\frac{\partial}{\partial s}$
along the slice $D_{\alpha}$, it follows that we have the CR-biholomorphic
from $M$ \ to $\Omega\times\mathbb{R}\subset\mathbb{C}^{n}\times\mathbb{R}.$
For the explicite construction, we refer to the proof  of Theorem 4.1 as in
the paper of Chang-Han-Li-Lin \cite{chll}.

(2) From Theorem \ref{T1A}, the transverse bisectional curvature of
$g_{i\overline{j}}^{T}(\cdot,t)$ is nonnegative for all $x\in M$ and $t\geq0$.
From the Sasaki-Ricci flow (\ref{9}), (\ref{8}) and the equation (\ref{36}),
we know that the ball $B_{t}(x_{0},C_{2}t^{\frac{1}{2}})$ of radius
$C_{2}t^{\frac{1}{2}}$ with respect the metric $g(\cdot,t)=g^{T}%
(\cdot,t)+\widetilde{\eta}\otimes\widetilde{\eta}$, contains the ball
$B_{0}(x_{0},C_{2}t^{\frac{1}{2}})$ with respect to the metric $g(\cdot
,0)=g^{T}(\cdot,0)+\eta\otimes\eta$. By using (\ref{37}), we can obtain that
\begin{equation}%
\begin{array}
[c]{c}%
\pi_{p}(M)=0\quad\text{and}\quad\pi_{q}(M,\infty)=0
\end{array}
\end{equation}
for any $p\geq1$ and $1\leq q\leq2n-1$, where $\pi_{q}(M,\infty)$ is the
$q$-th homotopy group of $M$ at infinity. By generalised Poincar\'{e}
conjecture (\cite{f}, \cite{s}), we obtain that $M^{2n+1}$ is homeomorphic to
$\mathbb{R}^{2n+1}$. Moreover, due to Gompf's result (\cite{go}),
$\mathbb{R}^{4}$ has exotic differential structures and the homeomorphisms can
be diffeomorphisms for $n>2$.

\appendix

\section{  \ \ \ }

In this appendix, we will devote to have the sketch proof of Proposition
\ref{P1} by following the method as in the paper of Lee-Tam \cite{lt2}.

As in \cite{swz}, we have

\begin{proposition}
\label{AP1}Let $M$ be a compact Riemannian $(2n+1)$-manifold. For any Sasakian
structure $(M,\xi,\eta,g^{T},\Phi,\omega),$ there exists a family of Sasakian
structures $(M,\xi(t),\eta(t),g^{T}(t),\Phi(t),\omega(t)),t\in\lbrack
0,T_{\max})$ for some $T_{\max}>0$ satisfying the Sasaki-Ricci flow%
\begin{equation}%
\begin{array}
[c]{lll}%
\frac{d}{dt}g_{i\overline{j}}^{T}(x,t) & = & -R_{i\overline{j}}^{T}(x,t),\\
g_{i\overline{j}}^{T}(x,0) & = & g_{i\overline{j}}^{T}(x)
\end{array}
\label{2020B}%
\end{equation}
with the initial condition $(\eta(0),\Phi(0),g(0))=(\eta,\Phi,g).$
\end{proposition}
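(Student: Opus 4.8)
The plan is to transfer the whole problem to the transverse K\"ahler geometry, where the Sasaki-Ricci flow reduces to a single scalar parabolic Monge-Amp\`ere equation for a basic potential; to solve that equation for a short time by the standard strictly-parabolic theory; and finally to reassemble the full family of Sasakian structures by a Type II deformation fixing the Reeb field $\xi$. Since $M$ is compact, all the analysis takes place on a compact manifold, so no completeness or curvature-bound issues arise.

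First I would fix the background form $\omega_{0}=\omega$ and note that, because $R_{i\overline{j}}^{T}=-\partial_{i}\partial_{\overline{j}}\log\det(g^{T})$ represents the basic first Chern class $c_{1}^{B}(M)$, the evolving transverse form can be written along the flow as
\[
\omega(t)=\omega_{0}-t\,Ric^{T}(\omega_{0})+\sqrt{-1}\,\partial_{B}\overline{\partial}_{B}\varphi(t),
\]
for a time-dependent basic function $\varphi$ with $\varphi(0)=0$. Under this ansatz the Sasaki-Ricci flow (\ref{2020B}) becomes equivalent to the scalar parabolic complex Monge-Amp\`ere equation
\[
\frac{\partial\varphi}{\partial t}=\log\frac{\bigl(\omega_{0}-t\,Ric^{T}(\omega_{0})+\sqrt{-1}\,\partial_{B}\overline{\partial}_{B}\varphi\bigr)^{n}\wedge\eta}{\omega_{0}^{n}\wedge\eta},\qquad\varphi(0)=0,
\]
the transverse analogue of (\ref{2020A}) and (\ref{2025C}); the Reeb direction drops out entirely and the unknown is a basic function.

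Next I would establish short-time existence for this scalar equation. At $\varphi=0$, $t=0$ the linearization of the right-hand side is the transverse Laplacian $\Delta_{B}$ of $g^{T}(0)$, which is elliptic on basic functions, so the equation is strictly parabolic near $t=0$; positivity of $\omega(t)$ then persists for small $t$ by openness. Short-time existence and uniqueness of a smooth basic solution $\varphi$ on $M\times[0,T_{\max})$ follow from the usual inverse-function-theorem argument (equivalently, linearization together with Schauder estimates and a contraction mapping) in H\"older spaces of basic functions, using the compactness of $M$. This is precisely the content of the reference \cite{swz}.

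Having produced $\varphi(t)$, hence $g_{i\overline{j}}^{T}(x,t)=g_{i\overline{j}}^{T}(x)+\varphi_{i\overline{j}}(x,t)>0$, I would reconstruct the Sasakian structures by the Type II deformation that fixes $\xi$: set $\eta(t)=\eta+d_{B}^{c}\varphi(t)$, so that $d\eta(t)=\omega(t)$ and $\eta(t)(\xi)=1$, and then recover $\Phi(t)$ and $g(t)=g^{T}(t)+\eta(t)\otimes\eta(t)$ as in Section 2; one checks directly that each $(\xi,\eta(t),\Phi(t),g(t))$ is again Sasakian with Reeb field $\xi$ and that its transverse metric solves (\ref{2020B}) with the prescribed initial data. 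The main obstacle is making the elliptic and parabolic estimates rigorous in the basic category: one must verify that the solution remains basic for all short times---which holds because both the initial data and the Monge-Amp\`ere operator commute with $\mathcal{L}_{\xi}$---and that $\Delta_{B}$ enjoys the same Schauder and Sobolev theory as a genuine elliptic operator on a compact manifold. Granting the basic Hodge theory for the transversely K\"ahler Reeb foliation, the remaining analysis is identical to the short-time existence proof for the K\"ahler-Ricci flow, so no genuinely new analytic difficulty appears.
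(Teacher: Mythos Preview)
Your reduction to a scalar transverse Monge-Amp\`ere equation and the reconstruction of the Sasakian structures by a Type~II deformation are correct, and this is indeed the skeleton of the argument in \cite{swz}. However, the paper's own proof takes a different tack at the crucial analytic step: rather than work directly in H\"older spaces of \emph{basic} functions and invoke a transverse parabolic theory, the paper enlarges the equation to one that is genuinely parabolic on all of $M$. Concretely, it replaces your equation by
\[
\frac{d}{dt}\varphi=\log\det(g_{\alpha\overline{\beta}}^{T}+\varphi_{,\alpha\overline{\beta}})-\log\det(g_{\alpha\overline{\beta}}^{T})+\xi^{2}\varphi-f,
\]
now allowing $\varphi$ to be an arbitrary (not necessarily basic) function and using the full covariant Hessian $\varphi_{,\alpha\overline{\beta}}$. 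The added term $\xi^{2}\varphi$ supplies the missing second derivative in the Reeb direction, so the equation becomes strictly parabolic in the ordinary sense and short-time existence follows from standard theory without any special basic-function machinery. The paper then computes the evolution of $(\xi\varphi)^{2}$ and applies the maximum principle to show that $\xi\varphi\equiv 0$ is preserved, so the solution is in fact basic and solves the original Sasaki-Ricci flow.

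Your route is not wrong---it is essentially the original strategy of \cite{swz}, resting on the transverse elliptic theory of \cite{el}---but it requires you to justify Schauder estimates for $\Delta_{B}$ on basic H\"older spaces, which you acknowledge as the main obstacle and then set aside. The paper's trick sidesteps this entirely: it trades the foundational question of a basic parabolic theory for a single maximum-principle computation. Each approach has its virtue: yours is more conceptually direct (one never leaves the basic category), while the paper's is more elementary in its analytic prerequisites.
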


In fact, the Sasaki-Ricci flow (\ref{2020B}) is equivalent to say for a basic
function $\varphi$%
\[%
\begin{array}
[c]{c}%
\frac{d}{dt}\varphi=\log\det(g_{\alpha\overline{\beta}}^{T}+\varphi
_{\alpha\overline{\beta}})-\log\det(g_{\alpha\overline{\beta}}^{T})-f.
\end{array}
\]

The method to prove this well-posedness of the Sasaki-Ricci flow (\ref{2020B})
is to develop a transverse parabolic theory (\cite{el}). More precisely, let%
\[%
\begin{array}
[c]{c}%
\varphi_{,\alpha\overline{\beta}}=X_{\overline{\beta}}X_{\alpha}%
\varphi-d\varphi(\nabla_{X_{\overline{\beta}}}X_{\alpha})
\end{array}
\]
be the covariant derivative with respect to the metric $g$ and%
\[%
\begin{array}
[c]{c}%
\widetilde{\omega}=d\eta+\frac{1}{2}(\nabla d\varphi(\Phi(\cdot),\cdot)-\nabla
d\varphi(\cdot,\Phi(\cdot))+\frac{1}{2}(\eta\wedge d(\xi\varphi)\circ\Phi
+\eta\wedge d\varphi)).
\end{array}
\]
Then
\[%
\begin{array}
[c]{c}%
\widetilde{\omega}=\sqrt{-1}(g_{i\overline{j}}^{T}+\varphi_{,i\overline{j}%
})dz^{i}\wedge d\overline{z}^{j}.
\end{array}
\]
If $\varphi$ is a basic function, then
\[
\widetilde{\omega}=d\eta+d_{B}d_{B}^{c}\varphi.
\]
Hence one can consider the following equation
\begin{equation}%
\begin{array}
[c]{c}%
\frac{d}{dt}\varphi=\log\det(g_{\alpha\overline{\beta}}^{T}+\varphi
_{,\alpha\overline{\beta}})-\log\det(g_{\alpha\overline{\beta}}^{T})+\xi
^{2}\varphi-f
\end{array}
\label{2025A}%
\end{equation}
for the general function $\varphi$, i.e. $\varphi$ in (\ref{2025A}) need not
to be a basic function. It is easy to check that (\ref{2025A}) is an ordinary
parabolic equation when
\begin{equation}%
\begin{array}
[c]{c}%
\det(g_{\alpha\overline{\beta}}^{T}+\varphi_{,\alpha\overline{\beta}})>0.
\end{array}
\label{2025B}%
\end{equation}
It follows from the parabolic theory that

\begin{lemma}
\label{Al1}For any initial function $\varphi$ with (\ref{2025B}), there exists
a positive $T>0$ and a solution $\varphi(x,t):M\times(0,T)\rightarrow
\mathbb{R}$ of (\ref{2025A}) and (\ref{2025B})) for any $t\in\lbrack0,T)$.
\end{lemma}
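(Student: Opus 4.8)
The plan is to verify that (\ref{2025A}) is, at the initial time, a uniformly parabolic fully nonlinear equation on the compact manifold $M$, and then to invoke the standard short-time existence theory for such equations. Write the right-hand side of (\ref{2025A}) as the nonlinear operator
\[
Q(\varphi)=\log\det(g_{\alpha\overline{\beta}}^{T}+\varphi_{,\alpha\overline{\beta}})-\log\det(g_{\alpha\overline{\beta}}^{T})+\xi^{2}\varphi-f,
\]
so that (\ref{2025A}) reads $\partial_{t}\varphi=Q(\varphi)$. The essential structural observation is that the transverse Monge-Amp\`ere term $\log\det(g^{T}+\varphi_{,\cdot\cdot})$ is only \emph{transversally} elliptic---it controls second derivatives along the contact distribution $D$---and by itself degenerates in the Reeb direction. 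The term $\xi^{2}\varphi$ is inserted precisely to restore ellipticity along $\xi$, and the use of the full covariant Hessian $\varphi_{,\alpha\overline{\beta}}$ (rather than the basic Hessian $\varphi_{\alpha\overline{\beta}}$) is what allows the problem to be posed for general, not necessarily basic, functions $\varphi$.

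First I would compute the linearization of $Q$ at a function $\varphi$ satisfying (\ref{2025B}). Differentiating $\log\det$ gives
\[
DQ|_{\varphi}(\psi)=\widetilde{g}^{\alpha\overline{\beta}}\psi_{,\alpha\overline{\beta}}+\xi^{2}\psi,
\]
where $\widetilde{g}^{\alpha\overline{\beta}}$ denotes the inverse of the positive-definite matrix $g_{\alpha\overline{\beta}}^{T}+\varphi_{,\alpha\overline{\beta}}$, which exists by (\ref{2025B}). To check uniform parabolicity I would compute the principal symbol at a covector $\zeta\in T_{x}^{*}M$. Using the splitting $TM=D\oplus\langle\xi\rangle$, write $\zeta=\zeta_{H}+c\,\eta$ with $c=\zeta(\xi)$ and $\zeta_{H}$ the horizontal part; since $\eta(X_{\alpha})=0$ (each $X_{\alpha}\in D$) and $\zeta_{H}(\xi)=0$, the symbol of $DQ|_{\varphi}$ is
\[
\sigma(\zeta)=\widetilde{g}^{\alpha\overline{\beta}}\zeta_{H}(X_{\alpha})\overline{\zeta_{H}(X_{\beta})}+c^{2}.
\]
By (\ref{2025B}) there is $\lambda>0$ with $\widetilde{g}^{\alpha\overline{\beta}}\zeta_{H}(X_{\alpha})\overline{\zeta_{H}(X_{\beta})}\geq\lambda|\zeta_{H}|^{2}$, so, using $|\zeta|^{2}=|\zeta_{H}|^{2}+c^{2}$, one gets $\sigma(\zeta)\geq\min(\lambda,1)\,|\zeta|^{2}>0$. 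Hence $\partial_{t}-DQ|_{\varphi}$ is uniformly elliptic in all $2n+1$ directions, and (\ref{2025A}) is uniformly parabolic in a neighborhood of the initial data.

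With parabolicity in hand, the short-time existence follows from the standard theory of fully nonlinear parabolic equations on compact manifolds: working in the parabolic H\"older spaces $C^{2+\alpha,\,1+\alpha/2}$, one solves the linearized problem for $\partial_{t}-DQ$ (solvable by linear parabolic theory, the operator being uniformly elliptic with smooth coefficients on the compact $M$) and then applies the implicit function theorem, or equivalently a contraction-mapping argument, to obtain a solution of the nonlinear problem on some interval $[0,T)$. Because condition (\ref{2025B}) is open and holds at $t=0$, it persists for small $t$, so the solution remains in the parabolic regime throughout $[0,T)$; this is the content subsumed by the transverse parabolic theory of \cite{el}. The main point requiring care---and the only place where the Sasakian geometry genuinely enters---is the symbol computation above: one must verify that the added $\xi^{2}\varphi$ term, together with the transverse Monge-Amp\`ere term, yields a symbol positive definite on the entire cotangent space rather than merely on its horizontal part, which is exactly what fails for the genuinely transverse flow (\ref{2020B}). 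Once this global ellipticity is confirmed, the remainder is a routine application of parabolic existence theory.
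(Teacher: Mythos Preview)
Your proof is correct and follows exactly the approach the paper intends: the paper's own argument is the single sentence ``It is easy to check that (\ref{2025A}) is an ordinary parabolic equation when (\ref{2025B}) holds. It follows from the parabolic theory that \ldots'', and you have simply fleshed out the symbol computation showing that the added $\xi^{2}\varphi$ term restores ellipticity in the Reeb direction so that standard (not transverse) parabolic theory applies. One small remark: your closing attribution to \cite{el} is slightly off---the whole point of passing to (\ref{2025A}) is to \emph{bypass} the transverse parabolic theory of El Kacimi--Alaoui by producing a genuinely parabolic equation, so the short-time existence here comes from ordinary parabolic PDE theory, not from \cite{el}.
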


Hence in order to prove Proposition \ref{AP1}, the only thing we need to check
is that the equation (\ref{2025A}) preserves the property
\[
\xi\varphi=0.
\]
In fact, since $\xi g_{i\overline{j}}=0,$ we have
\[
\xi(\varphi_{,i\overline{j}})=(\xi\varphi)_{_{,i\overline{j}}}%
\]
and then by straightforward computation
\[%
\begin{array}
[c]{c}%
\frac{d}{dt}(\xi\varphi)^{2}=g(t)^{i\overline{j}}((\xi\varphi)_{,i\overline
{j}}^{2})-2g(t)^{i\overline{j}}((\xi\varphi)_{i}(\xi\varphi)_{\overline{j}%
})+\xi^{2}(\xi\varphi)^{2}-2(\xi^{2}\varphi)^{2}.
\end{array}
\]
It follows from the maximum principle that the flow preserves the property
$\xi\varphi=0$.

Then, by using the standard method as in the paper of Shi \cite{s1}, we have
the short-time solution of the Sasaki-Ricci flow in a complete noncompact
Sasakian $(2n+1)$-manifold with bounded transverse holomorphic bisectional curvature.

\begin{proposition}
\label{AP2}Let $(M,\xi,\eta,g^{T},\Phi,\omega)$ be a complete noncompact
Sasakian $(2n+1)$-manifold with bounded transverse holomorphic bisectional
curvature. For any Sasakian structure $(M,\xi,\eta,g^{T},\Phi,\omega),$ there
exists a family of Sasakian structures $(M,\xi(t),\eta(t),g^{T}(t),\Phi
(t),\omega(t)),t\in\lbrack0,T_{\max})$ for some $T_{\max}>0$ satisfying the
Sasaki-Ricci flow%
\[%
\begin{array}
[c]{lll}%
\frac{d}{dt}g_{i\overline{j}}^{T}(x,t) & = & -R_{i\overline{j}}^{T}(x,t),\\
g_{i\overline{j}}^{T}(x,0) & = & g_{i\overline{j}}^{T}(x)
\end{array}
\]
with the initial condition $(\eta(0),\Phi(0),g(0))=(\eta,\Phi,g).$
\end{proposition}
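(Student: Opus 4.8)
The plan is to transplant Shi's construction of the Ricci flow on complete noncompact manifolds \cite{s1} to the transverse K\"{a}hler setting. The essential point is that the Sasaki-Ricci flow \eqref{2020B} deforms only the transverse metric $g^{T}$ while the Reeb field $\xi$ is held fixed, so that $\xi$ remains Killing along the flow and every quantity descends to the local leaf spaces $V_{\alpha}$; thus, transversally, the problem is a complete noncompact K\"{a}hler-Ricci flow with bounded curvature, to which Shi's scheme applies.

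First I would exhaust $M$ by a nested sequence of relatively compact, Reeb-saturated foliated domains $\Omega_{1}\subset\Omega_{2}\subset\cdots$, each a union of orbits of $\xi$, so that the transverse holomorphic structure and the notion of basic function restrict to $\overline{\Omega}_{k}$. On each $\overline{\Omega}_{k}$ I would solve \eqref{2020B} with the transverse metric held fixed along $\partial\Omega_{k}$. By the transverse parabolic theory behind Proposition \ref{AP1} and Lemma \ref{Al1}, in a foliation chart this boundary value problem is an ordinary parabolic Monge-Amp\`{e}re equation whose solvability on the compact domain follows from standard parabolic theory; the maximum principle computation displayed above preserves $\xi\varphi=0$, hence the full Sasakian structure of each approximation.

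Next I would establish a priori bounds independent of $k$. Setting $K:=\sup_{M}\|Rm^{T}(\cdot,0)\|$, the evolution equation \eqref{26} together with the maximum principle for $\Delta_{B}$ gives a uniform curvature bound $\|Rm^{T}\|(x,t)\leq C(n)K$ on a common time interval $[0,T]$ with $T=T(n,K)>0$. One then upgrades this to the transverse interior derivative estimates $\|\nabla^{m}Rm^{T}\|(x,t)\leq C_{m}\max\{\sqrt{K},K\}\,t^{-m/2}$ for $m\geq1$, by the Bernstein-type argument already carried out in part (III) of the proof of Theorem \ref{T1A}, performed here with basic cutoff functions supported in $\Omega_{k}$.

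Finally, these uniform local bounds permit a diagonal Arzel\`{a}-Ascoli argument: on the fixed manifold $M$ the domain solutions $g^{T}_{k}(\cdot,t)$ converge, along a subsequence, in $C^{\infty}_{\mathrm{loc}}$ to a solution of \eqref{2020B} on $M\times[0,T)$, and since each approximation is Sasakian with $\xi$ fixed and the convergence is smooth, the limit retains the Sasakian structure. The main obstacle is the third step: one must arrange the cutoff functions in the Bernstein estimate to be basic, so that they commute with the Reeb flow and the resulting derivative bounds genuinely descend to the transverse geometry, which is precisely where the foliated structure must be respected. Once these transverse Shi estimates are in hand, the exhaustion and limiting arguments are routine adaptations of \cite{s1} and \cite{s2}.
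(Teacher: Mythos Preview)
Your proposal is correct and matches the paper's approach: the paper does not give a detailed proof of Proposition~\ref{AP2} but simply states that one uses ``the standard method as in the paper of Shi \cite{s1},'' and your outline is precisely an adaptation of Shi's exhaustion--a priori estimate--compactness scheme to the transverse K\"{a}hler setting, with the appropriate care that cutoffs and auxiliary functions be basic so that the argument respects the Reeb foliation.
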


We recall the Sasaki-Chern-Ricci flow as the following parabolic
Monge-Amp\`{e}re equation (\ref{2025C}) on $[0,T)$
\[%
\begin{array}
[c]{c}%
\left\{
\begin{array}
[c]{ccl}%
\frac{\partial}{\partial t}\varphi & = & \log\frac{(\omega_{0}-tRic^{TC}%
(\omega_{0})+\sqrt{-1}\partial_{B}\overline{\partial}_{B}\varphi)^{n}%
\wedge\eta_{0}}{\omega_{0}^{n}\wedge\eta_{0}},\\
\varphi(0) & = & 0.
\end{array}
\right.
\end{array}
\]
It follows from Proposition \ref{P21}, Lemma \ref{Al1} and the result of
Tosatti-Weinkove \cite{tw}, we have the Sasaki analogue of Chern-Ricci flow.

\begin{theorem}
Let $M$ be a compact Sasakian $(2n+1)$-manifold. Let $S_{A}$ be the supremum
of $S>0$ so that the Sasaki-Chern-Ricci flow has a solution $h^{T}(t)$ with
initial data $h_{0}^{T}$ on $M\times\lbrack0,S)$. Let $S_{B}$ be the supremum
of $S$ for which there is a smooth basic function $u$ such that
\[%
\begin{array}
[c]{c}%
\omega_{0}-tRic^{TC}(\omega_{0})+\sqrt{-1}\partial_{B}\overline{\partial}%
_{B}u>0\}.
\end{array}
\]
Then
\[
S_{A}=S_{B}.
\]

\end{theorem}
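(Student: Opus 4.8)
The plan is to establish the two inequalities $S_{A}\le S_{B}$ and $S_{A}\ge S_{B}$ separately, following the scheme of Tosatti--Weinkove \cite{tw} transplanted to the transverse K\"ahler setting by means of the basic parabolic theory underlying Lemma \ref{Al1}. For the direction $S_{A}\le S_{B}$, suppose the Sasaki--Chern--Ricci flow admits a solution on $M\times[0,S)$. As recorded just before Proposition \ref{P21}, along the flow one has $\omega(t)=\omega_{0}-t\,Ric^{TC}(\omega_{0})+\sqrt{-1}\partial_{B}\overline{\partial}_{B}\varphi(t)$ with $\omega(t)>0$ for every $t<S$, and the potential $\varphi(t)$ is basic (the property $\xi\varphi=0$ being preserved, as shown for the transverse Monge--Amp\`ere equation). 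Taking $u=\varphi(t)$ then realizes the positivity defining $S_{B}$ for each $t<S$, so $S_{B}\ge t$ for all $t<S$, and letting $S\uparrow S_{A}$ gives $S_{B}\ge S_{A}$.

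For the reverse inequality it suffices, for an arbitrary $t_{0}<S_{B}$, to extend the flow smoothly past $t_{0}$. First I would fix $T'\in(t_{0},S_{B})$ and a basic function $u$ with $\widetilde{\omega}:=\omega_{0}-T'\,Ric^{TC}(\omega_{0})+\sqrt{-1}\partial_{B}\overline{\partial}_{B}u>0$, and introduce the reference family
\[
\widehat{\omega}_{t}=\tfrac{1}{T'}\big((T'-t)\omega_{0}+t\,\widetilde{\omega}\big),\qquad t\in[0,T'],
\]
which is transverse K\"ahler and positive, being a convex combination of positive forms, and whose basic Bott--Chern class equals $[\omega_{0}]_{BBC}-t\,c_{1}^{BBC}(M)=[\omega(t)]_{BBC}$. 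Writing the unknown metric as $\omega(t)=\widehat{\omega}_{t}+\sqrt{-1}\partial_{B}\overline{\partial}_{B}\psi$, the flow (\ref{2025C}) becomes a parabolic complex Monge--Amp\`ere equation for the basic potential $\psi$ relative to the smooth positive family $\widehat{\omega}_{t}$, which is locally solvable by Lemma \ref{Al1}.

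The core of the argument is then a priori estimates, uniform on $[0,t_{0}]$, for this normalized equation. I would first obtain $C^{0}$ bounds on $\psi$ and on $\dot{\psi}=\log\frac{\omega(t)^{n}\wedge\eta_{0}}{\omega_{0}^{n}\wedge\eta_{0}}$ by the maximum principle, using that $\widehat{\omega}_{t}$ remains uniformly positive for $t\le t_{0}<T'$. Next comes the second-order estimate bounding $\mathrm{tr}_{\widehat{\omega}_{t}}\omega$ from above via a transverse parabolic Aubin--Yau computation; combined with the lower bound $\omega(t)^{n}\wedge\eta_{0}\ge c\,\omega_{0}^{n}\wedge\eta_{0}$ coming from the $C^{0}$ control of $\dot{\psi}$, this yields uniform equivalence $C^{-1}\omega_{0}\le\omega(t)\le C\omega_{0}$ on $[0,t_{0}]$. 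Higher-order control (transverse Evans--Krylov together with basic Schauder estimates drawn from the transverse parabolic theory in \cite{el}) then follows, giving uniform $C^{\infty}_{\mathrm{loc}}$ bounds, so that the solution extends smoothly beyond $t_{0}$. Since $t_{0}<S_{B}$ was arbitrary, $S_{A}\ge S_{B}$, which with the first part gives $S_{A}=S_{B}$.

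The hard part will be the second-order estimate. Unlike the Sasaki--Ricci flow, $Ric^{TC}$ is the transverse \emph{Chern}--Ricci form, so the transverse Chern connection of the reference family carries a nonvanishing torsion $T_{ij}^{k}$, and the commutation terms arising in the evolution of $\mathrm{tr}_{\widehat{\omega}_{t}}\omega$ do not cancel as cleanly as in the K\"ahler case. Controlling these torsion contributions while keeping every quantity basic, i.e.\ $\xi$-invariant so that all estimates descend to the leaf space, is the delicate point, and it is precisely here that the Tosatti--Weinkove technique must be adapted rather than quoted verbatim.
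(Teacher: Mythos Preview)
Your proposal is correct and matches the paper's intended approach. The paper does not actually write out a proof of this theorem: it merely records that the result ``follows from Proposition \ref{P21}, Lemma \ref{Al1} and the result of Tosatti--Weinkove \cite{tw}'', and your outline is precisely the Tosatti--Weinkove scheme transplanted to the transverse setting via the basic parabolic theory of \cite{el}, which is exactly what that sentence is pointing to. One minor remark: if the initial data $h_{0}^{T}$ is the transverse K\"ahler metric of the Sasakian structure itself, then the transverse Chern connection is torsion-free and the flow coincides with the Sasaki--Ricci flow, so the torsion terms you flag as the ``hard part'' actually vanish; they only enter when $h_{0}^{T}$ is a general transverse Hermitian metric (which is the setting of the subsequent noncompact Theorem \ref{AT1}, where $|T|_{h_{0}^{T}}$ and $|\overline{\partial}T|_{h_{0}^{T}}$ appear explicitly as hypotheses).
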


Furthermore, it follows from the results of Lott-Zhang \cite{lz} and Lee-Tam
\cite{lt2} that

\begin{theorem}
\label{AT1}Let $M$ be a complete noncompact Sasakian $(2n+1)$-manifold with
bounded curvature. Let $S_{A}$ be the supremum of $S>0$ so that the
Sasaki-Chern-Ricci flow has a solution $h^{T}(t)$ with initial data $h_{0}%
^{T}$ sand $h^{T}(t)$ is uniformly equivalent to $h_{0}^{T}$ on $M\times
\lbrack0,S)$. Let $S_{B}$ be the supremum of $S$ for which there is a smooth
bounded basic function $u$ such that
\[%
\begin{array}
[c]{c}%
\omega_{0}-tRic^{TC}(\omega_{0})+\sqrt{-1}\partial_{B}\overline{\partial}%
_{B}u\geq\beta\omega_{0}%
\end{array}
\]
for some $\beta>0$. Assume that

\begin{enumerate}
\item $|T|_{h_{0}^{T}}$ and $|\overline{\partial}T|_{h_{0}^{T}}$ are uniformly bounded;

\item the transverse bisectional curvature of $h_{0}^{T}$ is uniformly bounded
below (we do not assume that the bounded curvature of $h_{0}^{T}$);

\item there exists a smooth real basic function $\rho$ which is uniformly
equivalent to the distance function from a fixed point such that
$|\partial_{B}\rho|_{h_{0}^{T}}$ and $|\partial_{B}\overline{\partial}_{B}%
\rho|_{h_{0}^{T}}$ are uniformly bounded. Then%
\[
S_{A}=S_{B}.
\]

\end{enumerate}
\end{theorem}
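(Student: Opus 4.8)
The plan is to prove the two inequalities $S_{A}\leq S_{B}$ and $S_{B}\leq S_{A}$ separately, after reducing the flow to the scalar parabolic complex Monge-Amp\`{e}re equation (\ref{2025C}). Throughout one writes $\widehat{\omega}_{t}=\omega_{0}-tRic^{TC}(\omega_{0})$ for the reference family in the basic Bott-Chern class $[\omega_{0}]_{BBC}-tc_{1}^{BBC}(M)$, so that a solution of the Sasaki-Chern-Ricci flow corresponds to $\omega(t)=\widehat{\omega}_{t}+\sqrt{-1}\partial_{B}\overline{\partial}_{B}\varphi(t)$ with $\varphi$ basic and $\dot{\varphi}=\log\big(\omega(t)^{n}\wedge\eta_{0}\big)/\big(\omega_{0}^{n}\wedge\eta_{0}\big)$. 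As in Lemma \ref{Al1}, the basic condition $\xi\varphi=0$ is propagated by the flow, so all estimates may be carried out at the level of basic functions and transverse $(1,1)$-forms using the transverse Chern connection.

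For the easy inclusion $S_{A}\leq S_{B}$, take any $S<S_{A}$, so that the flow admits a solution on $M\times[0,S)$ uniformly equivalent to $h_{0}^{T}$. Then $C^{-1}\omega_{0}\leq\omega(t)\leq C\omega_{0}$ for $t<S$, which gives the positivity $\omega(t)\geq\beta\omega_{0}$ with $\beta=C^{-1}$ and a uniform bound on $\dot{\varphi}$. Integrating in time shows $\varphi(t)=\int_{0}^{t}\dot{\varphi}\,ds$ is a bounded basic function, and since $\omega(t)=\widehat{\omega}_{t}+\sqrt{-1}\partial_{B}\overline{\partial}_{B}\varphi(t)$, the choice $u=\varphi(t)$ certifies that $t$ is admissible for $S_{B}$ for every $t<S$. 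Hence $S\leq S_{B}$, and taking the supremum over $S<S_{A}$ yields $S_{A}\leq S_{B}$.

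The substantial inclusion is $S_{B}\leq S_{A}$, which I would establish by a continuity-method argument producing, for any $S<S_{B}$, a solution on $[0,S)$ together with the a priori estimates that force uniform equivalence. Fix a bounded basic subsolution $u$ with $\widehat{\omega}_{S}+\sqrt{-1}\partial_{B}\overline{\partial}_{B}u\geq\beta\omega_{0}$. The estimates proceed in the usual order, all in their localized noncompact forms obtained by inserting cutoffs built from the exhaustion function $\rho$ of hypothesis (3): first a $C^{0}$ bound for $\varphi$ and for $\dot{\varphi}$ via the maximum principle applied to $\varphi-u$ and to time-differentiated quantities; next a second-order estimate of parabolic Schwarz-lemma type giving $\omega(t)\leq C\omega_{0}$, where the lower bound on the transverse bisectional curvature (hypothesis (2)) and the torsion bounds $|T|_{h_{0}^{T}},|\overline{\partial}T|_{h_{0}^{T}}$ (hypothesis (1)) are exactly what is needed to absorb the non-K\"{a}hler error terms produced by the transverse Chern connection; then the reverse bound $\omega(t)\geq C^{-1}\omega_{0}$ from the Monge-Amp\`{e}re equation together with the $\dot{\varphi}$ bound; and finally local higher-order estimates by Evans-Krylov and Schauder theory, which upgrade the uniform equivalence to uniform $C^{\infty}_{loc}$ control. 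These estimates being uniform on $[0,S)$, the solution extends there and stays uniformly equivalent to $h_{0}^{T}$, giving $S\leq S_{A}$ and hence $S_{B}\leq S_{A}$.

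The main obstacle is the second-order estimate on the complete noncompact manifold, where two difficulties combine. First, the maximum principle must be replaced by a barrier argument: one applies it to a quantity such as $\log\mathrm{tr}_{\omega_{0}}\omega-A\varphi-\epsilon\rho$ and uses the bounds on $|\partial_{B}\rho|_{h_{0}^{T}}$ and $|\partial_{B}\overline{\partial}_{B}\rho|_{h_{0}^{T}}$ to control the contributions from infinity before letting $\epsilon\to0$. Second, because $h^{T}$ is only transverse Hermitian and not transverse K\"{a}hler, commuting covariant derivatives introduces torsion terms, and it is precisely the uniform bounds on $T$ and $\overline{\partial}T$ together with the lower curvature bound that make the Bochner-type computation close with constants independent of the point. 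This is the step where hypotheses (1)--(3) are genuinely used; once it is in place, the remaining arguments are direct transcriptions of the transverse analogues of Lott-Zhang \cite{lz} and Lee-Tam \cite{lt2}.
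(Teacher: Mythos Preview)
The paper does not actually give its own proof of Theorem~\ref{AT1}; it simply records the statement as a Sasakian analogue of the results of Lott--Zhang \cite{lz} and Lee--Tam \cite{lt2} and proceeds immediately to the next lemma. Your outline is precisely the adaptation of the Lee--Tam argument to the transverse setting: the reduction to the scalar Monge--Amp\`{e}re equation, the easy direction $S_{A}\leq S_{B}$ via the potential $\varphi(t)$, and the hard direction via the chain $C^{0}$--$\dot\varphi$--trace--Evans--Krylov with the barrier $-\epsilon\rho$ enforcing the maximum principle on the noncompact manifold. This is exactly the route the cited references take, and you have correctly identified where each hypothesis (1)--(3) enters. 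So your proposal is consistent with, and in fact supplies, what the paper omits.
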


\begin{remark}
It is in general not true that $h_{0}^{T}$ has bounded geometry of infinite order.
\end{remark}

Then we need the following key Lemma:

\begin{lemma}
There exists $0<\alpha<1$ depending only on $n$ so that the following is true:
Let $(M,\xi,\eta,g_{0}^{T},\Phi,\omega_{0})$ be a noncompact Sasakian
$(2n+1)$-manifold with the precompact open set $U\subset M$. For $\rho>0$ so
that $U_{\rho}$ is the nonempty set such that $B_{g_{0}^{T}}(x,\rho
)\subset\subset U$ with \
\[%
\begin{array}
[c]{c}%
|Rm^{T}|(x)\leq\rho^{-2}%
\end{array}
\]
and
\[%
\begin{array}
[c]{c}%
\mathrm{inj}_{g_{0}^{T}}(x)\geq\rho
\end{array}
\]
for all $x\in U$. Then for any component $X$ of $U_{\rho}$, there is a
solution $g^{T}(t)$ to the Sasaki-Ricci flow on $X\times\lbrack0,\alpha
\rho^{2}\}$ such that $g^{T}(t)$ satisfies

\begin{enumerate}
\item $g^{T}(0)=g_{0}^{T}$ on $X,$

\item
\[
\alpha g_{0}^{T}\leq g^{T}(t)\leq\alpha^{-1}g_{0}^{T}%
\]
on $X\times\lbrack0,\alpha\rho^{2}\}.$
\end{enumerate}
\end{lemma}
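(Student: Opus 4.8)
The plan is to reduce the statement to a local existence-and-estimate result for the transverse K\"{a}hler--Ricci flow and to extract the metric equivalence $(2)$ from a localized maximum principle. First I would normalize by the parabolic rescaling $g_{0}^{T}\mapsto\rho^{-2}g_{0}^{T}$, together with the corresponding rescaling of time, under which the Sasaki-Ricci flow $(\ref{2020B})$ is invariant since the transverse Ricci curvature is scale invariant while $|Rm^{T}|$ scales by $\rho^{2}$ and $\mathrm{inj}$ by $\rho^{-1}$. This reduces the hypotheses to $|Rm^{T}|(x)\leq1$ and $\mathrm{inj}_{g_{0}^{T}}(x)\geq1$ for all $x\in U$, and the conclusion to a solution on $X\times\lbrack0,\alpha)$ with $\alpha g_{0}^{T}\leq g^{T}(t)\leq\alpha^{-1}g_{0}^{T}$, where now $\alpha=\alpha(n)$ is the dimensional constant to be determined. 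Because the Sasaki-Ricci flow is precisely the transverse K\"{a}hler-Ricci flow, via the submersions $\pi_{\alpha}:U_{\alpha}\rightarrow V_{\alpha}\subset\mathbb{C}^{n}$ every computation may be carried out with basic quantities on $D=\ker\eta$, and as in the Appendix one checks that the basic condition $\mathcal{L}_{\xi}\varphi=0$ is preserved along the flow, so no genuinely new phenomenon occurs in the Reeb direction.

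Second, I would construct the solution on the component $X$ of $U_{1}$. The uniform curvature and injectivity radius bounds on $U$ give $g_{0}^{T}$ bounded geometry on a neighborhood of $\overline{X}$, so Shi's local existence method in the Sasakian form already used for Proposition \ref{AP2} produces a short-time transverse solution. Since the a priori bounds are available only on $U$ and $X$ is merely precompact, I would realize $g^{T}(t)$ as a limit of solutions on a compact exhaustion of $M$ (or of flows with initial data modified outside $U$), the limit being extracted from the interior Shi-type derivative estimates $(\ref{BBB})$. The reason for restricting to a component $X$ of $U_{\rho}=U_{1}$ is that each $x\in X$ satisfies $B_{g_{0}^{T}}(x,1)\subset\subset U$, so the localized estimates below see only the controlled region.

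Third, and this is where the definite time interval and the dimensional constant arise, I would establish the two-sided bound $(2)$ by the maximum principle applied to the trace quantities $\mathrm{tr}_{g_{0}^{T}}g^{T}(t)$ and $F(x,t)=\log\bigl(\det g^{T}(t)/\det g_{0}^{T}\bigr)$. Differentiating $\mathrm{tr}_{g_{0}^{T}}g^{T}$ along $(\ref{2020B})$ and using the Calabi-type second-order computation with $|Rm^{T}|\leq1$ yields a differential inequality of the schematic form $(\partial_{t}-\Delta_{B})\,\mathrm{tr}_{g_{0}^{T}}g^{T}\leq C(n)\,\mathrm{tr}_{g_{0}^{T}}g^{T}$, with a parallel inequality controlling $\mathrm{tr}_{g^{T}(t)}g_{0}^{T}$ from above and $F$ from below. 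Localizing with a cutoff barrier supported in $X$ and adapted to the unit geodesic balls then gives $\mathrm{tr}_{g_{0}^{T}}g^{T}(t)\leq\alpha^{-1}$ and $F\geq\log\alpha$, hence $(2)$, provided $t\leq\alpha$ for $\alpha=\alpha(n)$ small. Unwinding the scaling returns the solution on $X\times\lbrack0,\alpha\rho^{2})$ with the asserted equivalence.

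I expect the main obstacle to be the localization in the third step: with only interior bounds on $U$ one must prevent the evolving metric from degenerating near $\partial X$ before time $\alpha$, which requires a carefully chosen time-dependent barrier, as in Lee--Tam \cite{lt2}, so that the resulting $\alpha$ depends on the dimension alone and not on $X$, $U$, $\rho$, or the global geometry of $M$. Coupling the upper bound on $\mathrm{tr}_{g_{0}^{T}}g^{T}$ with the lower bound on $F$ to close the two-sided estimate, while keeping every auxiliary function basic, is the delicate part; the existence in the second step is comparatively routine once bounded geometry and the preservation of the basic condition are in hand.
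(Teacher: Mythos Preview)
Your outline takes a different route from the paper. You propose to run the Sasaki--Ricci flow directly on the incomplete region $X$ and to obtain the two-sided bound by a localized maximum principle with a cutoff barrier. The paper, following Lee--Tam \cite{lt2}, instead \emph{completes} the geometry: after scaling to $\rho=1$ one constructs a basic function $\sigma\geq 0$ with $\sigma\equiv 0$ on $U_{1}$, $\sigma\geq 1$ near $\partial U$, and $|\nabla\sigma|+|\nabla^{2}\sigma|\leq C$, and then conformally deforms to $h_{0}=e^{2F}g_{0}^{T}$ with $F=\mathcal{F}(\sigma)$ so that the component $\overline{W}\supset X$ of $\{\sigma<1\}$ becomes a \emph{complete} transverse Hermitian manifold of bounded geometry of infinite order. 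Because the conformal factor destroys the transverse K\"{a}hler condition, one runs the \emph{Sasaki--Chern--Ricci} flow on $(\overline{W},h_{0})$ via Theorem \ref{AT1}; on $X\subset U_{1}$ one has $h_{0}=g_{0}^{T}$, so the Chern--Ricci flow there coincides with the Sasaki--Ricci flow, and the uniform equivalence $\alpha h_{0}\leq h(t)\leq\alpha^{-1}h_{0}$ coming from the complete theory gives conclusion $(2)$ directly.

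This buys a clean existence-and-estimate package on a complete manifold and bypasses entirely the localization obstacle you flag. By contrast, your step 2 is not resolved: a compact exhaustion forces boundary conditions for the flow that you do not supply, and ``modifying the initial data outside $U$'' while retaining both the transverse K\"{a}hler property and globally bounded curvature is precisely the obstruction the conformal trick dissolves by passing to the Hermitian category. Your citation of Lee--Tam \cite{lt2} for a ``carefully chosen time-dependent barrier'' misidentifies their mechanism; their Lemma 4.1 constructs the conformal profile $\mathcal{F}$, not a barrier for a localized trace estimate. Your approach is not unreasonable in spirit, but as written the existence step has a genuine gap, and the paper's Chern--Ricci detour is exactly what closes it.
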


The key fact is: Assume that $\rho=1.$ There is a smooth \ basic function
$\sigma(x)\geq0$ on $U$ such that $\sigma(x)=0$ on $U_{1}$ and $\sigma
(x)\geq1$ on $\partial U$ and $|\nabla\sigma(x)|+|\nabla^{2}\sigma(x)|\leq C.$
Let $W=\{x\in U|\sigma(x)<1\},X\subset U_{1}\subset W.$ Let $\overline{W}$ be
the component of $W$ containing $X.$ Let
\[
h_{0}=e^{2F}g_{0}^{T}%
\]
be the transverse Hermitian metric (\cite[Lemma B.1.]{ct2}) on $\overline{W}$
for some $F(x)=\mathcal{F(}\sigma(x))$ which is defined by \cite[Lemma
4.1.]{lt2}. Then $(\overline{W},h_{0})$ is a complete transverse Hermitian
metric and $(U_{\rho},h_{0})$ has bounded geometry of infinite order. It
follows from Theorem \ref{AT1} that the Sasaki-Chern-Ricci flow has a solution
$h^{T}(t)$ on $\overline{W}\times\lbrack0,\alpha]$ for some $\alpha>0$ such
that
\[
\alpha h_{0}^{T}\leq h(t)\leq\alpha^{-1}h_{0}^{T}.
\]

Furthermore, it follows from Lee-Tam \cite{lt1} and Simon-Topping \cite{st} that

\begin{lemma}
For any $v_{0},$ there exist $\overline{S}(n,v_{0})>0,$ $C_{0}(n,v_{0})>0$
such that the following holds: Suppose that $(M,\xi(t),\eta(t),g^{T}%
(t),\Phi(t))$ is a Sasaki-Ricci flow for $t\in\lbrack0,T]$ and $x_{0}\in M$
such that $B_{g_{t}^{T}}(x_{0},r)\subset\subset M$. If \
\[%
\begin{array}
[c]{c}%
BK(g^{T}(t))\geq-r^{-2}\text{ \textrm{and}\ }V_{g_{0}^{T}}(x_{0},r)\geq
v_{0}r^{2n}%
\end{array}
\]
on $B_{g_{t}^{T}}(x_{0},r),t\in\lbrack0,S].$ Then%
\[%
\begin{array}
[c]{c}%
|Rm^{T}(x,t)|\leq\frac{C_{0}}{t}\text{\ \textrm{and}\ }|\nabla Rm^{T}%
(x,t)|\leq\frac{C_{0}}{t^{\frac{3}{2}}}%
\end{array}
\]
on $B_{g_{t}^{T}}(x_{0},\frac{1}{8}r)$ and $t\in(0,S]\cap(0,\overline{S}%
r^{2}).$ Moreover,
\[%
\begin{array}
[c]{c}%
\mathrm{inj}_{_{g_{t}^{T}}}(x)\geq(C^{-1}t)^{\frac{1}{2}}%
\end{array}
\]
on $B_{g_{t}^{T}}(x_{0},\frac{1}{8}r),t\in(0,S]\cap(0,\overline{S}r^{2}).$
\end{lemma}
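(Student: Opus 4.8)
The plan is to transfer the local curvature estimates for the K\"ahler-Ricci flow of Simon-Topping \cite{st} and Lee-Tam \cite{lt1} to the transverse setting. The essential point is that the Sasaki-Ricci flow $\partial_t g^T_{i\bar j}=-R^T_{i\bar j}$ fixes the Reeb field $\xi$ and evolves only the transverse K\"ahler metric, so on each foliation chart $\pi_\alpha:U_\alpha\to V_\alpha\subset\mathbb{C}^n$ it coincides with the ordinary K\"ahler-Ricci flow for $g^T$ on the $2n$-dimensional leaf space. Since $Rm^T$, $\nabla Rm^T$ and the transverse bisectional curvature $BK(g^T)$ are all basic and descend to $V_\alpha$, and since the transverse balls $B_\xi$, the transverse volume $V_{g^T}$ and the transverse injectivity radius correspond to their K\"ahler counterparts, I would establish the three conclusions by transcribing the K\"ahler arguments with the Laplacian, gradient and distance replaced throughout by $\Delta_B$, $\nabla^T$ and the transverse distance.

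For the bound $|Rm^T|(x,t)\le C_0/t$ on $B_\xi(x_0,\tfrac{1}{8}r)$ I would follow the Simon-Topping double-bootstrap mechanism. First, using the evolution equation (\ref{26})--(\ref{25}) for $Rm^T$ together with a cutoff built from a basic function, one shows the lower bound $BK(g^T(t))\ge -r^{-2}$, hence a uniform lower bound on $Ric^T$, is essentially propagated on the interior ball over the relevant time interval. One then argues by contradiction: were the scale-invariant quantity $t\,|Rm^T|(x,t)$ unbounded on $B_\xi(x_0,\tfrac18 r)$ for $t\in(0,\bar S r^2)$, a point-picking procedure produces base points and times at which a suitable blow-up quantity attains its supremum; rescaling the transverse K\"ahler-Ricci flow there, the noncollapsing hypothesis $V_{g_0^T}(x_0,r)\ge v_0 r^{2n}$ keeps the rescaled balls non-collapsed, while the lower bisectional bound $-r^{-2}$ scales away to give $BK\ge 0$ in the limit. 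The limit is thus a non-flat ancient transverse K\"ahler-Ricci flow of nonnegative bisectional curvature and bounded curvature, contradicting the blow-up choice. This fixes $\bar S(n,v_0)$ and $C_0(n,v_0)$.

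Granting $|Rm^T|\le C_0/t$, the gradient estimate $|\nabla Rm^T|\le C_0/t^{3/2}$ is the transverse Shi estimate, obtained exactly as in part (III) of the proof of Theorem \ref{T1A}: one applies the maximum principle to $t\,|\nabla Rm^T|^2+\beta|Rm^T|^2$, using $\partial_t|\nabla Rm^T|^2=\Delta_B|\nabla Rm^T|^2-2|\nabla^2 Rm^T|^2+Rm^T\ast(\nabla Rm^T)^{\ast 2}$ and the fact $\nabla_\xi Rm^T=0$, now localized by a basic spatial cutoff on $B_\xi(x_0,\tfrac18 r)$. For the injectivity radius $\mathrm{inj}_{g_t^T}(x)\ge (C^{-1}t)^{1/2}$, I would combine the two-sided curvature bound just obtained with the preserved volume noncollapsing (as in Corollary \ref{C31}, where maximal volume growth is shown to persist along the flow) and invoke the Cheeger-Gromov-Taylor estimate \cite{cgt} in the transverse geometry, precisely the argument used to derive (\ref{35}) and (\ref{37}) earlier.

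The main obstacle is the localization of the Simon-Topping blow-up argument to the foliated setting. One must ensure that the rescaled transverse K\"ahler-Ricci flows converge in the Cheeger-Gromov sense on the leaf spaces while the Reeb foliation and the basic-function framework are preserved, and that the noncollapsing of the transverse balls $B_\xi$ is not spoiled by the $\xi$-direction. Because the entire flow is driven by the transverse metric and every tensor involved is basic, this reduction is expected to succeed; but it is here---rather than in the purely local maximum-principle steps, which transcribe verbatim from the K\"ahler case---that the foliation structure must be handled with care.
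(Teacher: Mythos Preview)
Your proposal is correct and takes essentially the same approach as the paper. The paper itself gives no proof for this lemma beyond the attribution ``it follows from Lee-Tam \cite{lt1} and Simon-Topping \cite{st},'' so your sketch---reducing to the transverse K\"ahler-Ricci flow on foliation charts and then invoking the Simon--Topping point-picking/blow-up mechanism for $|Rm^T|\le C_0/t$, the localized Shi estimate for $|\nabla Rm^T|$, and Cheeger--Gromov--Taylor for the injectivity radius---is exactly the intended route and in fact supplies more detail than the paper does.
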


Finally, as in the paper of Lee-Tam \cite[Theorem 5.1.]{lt2}, we have the
Sasaki analogue of the short-time solution for Sasaki-Ricci flow in a complete
noncompact Sasakian $(2n+1)$-manifold without the boundedness of transverse
nonnegative holomorphic bisectional curvature.

\begin{proposition}
Let $(M,\xi,\eta,g_{0}^{T},\Phi,\omega_{0})$ be a complete noncompact Sasakian
$(2n+1)$-manifold with transverse nonnegative holomorphic bisectional
curvature and
\[%
\begin{array}
[c]{c}%
V_{g_{0}^{T}}(x,1)\geq v_{0}%
\end{array}
\]
for some $v_{0}>0$ and all $x\in M$. Then there exist $T_{0}(n,v_{0})>0,$
$C_{0}(n,v_{0})>0$ such that a family of Sasakian structures $(M,\xi
(t),\eta(t),g^{T}(t),\Phi(t),\omega(t))$ satisfying the Sasaki-Ricci flow
\[%
\begin{array}
[c]{lll}%
\frac{d}{dt}g_{i\overline{j}}^{T}(x,t) & = & -R_{i\overline{j}}^{T}(x,t),\\
g_{i\overline{j}}^{T}(x,0) & = & g_{i\overline{j}}^{T}(x).
\end{array}
\]
with
\[%
\begin{array}
[c]{c}%
||Rm^{T}||^{2}(x,t)\leq\frac{C_{0}}{t},\text{\ \textrm{on} }M\times
\lbrack0,T_{0}).
\end{array}
\]

\end{proposition}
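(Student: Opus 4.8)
The plan is to follow the exhaustion-and-limit scheme of Lee-Tam \cite[Theorem 5.1]{lt2}, transported to the transverse K\"ahler geometry of the Reeb foliation. The three ingredients assembled above are to be combined: the local existence of the flow on precompact pieces provided by the key Lemma (which itself rests on Theorem \ref{AT1} for the Sasaki-Chern-Ricci flow), the interior curvature and injectivity-radius estimates of Simon-Topping type furnished by the final Lemma, and the preservation of nonnegative transverse bisectional curvature established in part (II) of the proof of Theorem \ref{T1A}. Out of these I would manufacture a complete solution on a uniform time interval $[0,T_0)$ with the desired decay $\|Rm^T\|^2(x,t)\leq C_0/t$.

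First I would fix an exhaustion $M=\bigcup_k U^{(k)}$ by precompact open sets and invoke the key Lemma on each $U^{(k)}$. After the scaling normalization $\rho=1$, this produces, on each component $X_k$ of $U^{(k)}_\rho$, a solution $g^T_k(t)$ of the Sasaki-Ricci flow on $X_k\times[0,\alpha)$ with the two-sided bound
\[
\alpha\, g_0^T\leq g^T_k(t)\leq \alpha^{-1}g_0^T
\]
for the dimensional constant $\alpha\in(0,1)$. Because the construction keeps the Reeb field $\xi$ fixed and carries along the transverse holomorphic structure, each $g^T_k(t)$ is genuinely transverse K\"ahler and assembles into a Sasakian structure $(M,\xi(t),\eta(t),g^T_k(t),\Phi(t))$.

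Next I would upgrade these local solutions to a uniform interior estimate through the estimate Lemma, whose hypotheses are the lower bisectional bound $BK(g^T(t))\geq -r^{-2}$ and the volume non-collapsing $V_{g_0^T}(x_0,r)\geq v_0 r^{2n}$. The volume condition is immediate: transverse nonnegative Ricci curvature, which follows from $Rm^T\geq 0$, makes $V_{g_0^T}(x,r)/r^{2n}$ nonincreasing in $r$, so the hypothesis $V_{g_0^T}(x,1)\geq v_0$ forces $V_{g_0^T}(x,r)\geq v_0 r^{2n}$ for all $r\leq 1$ and all $x$. The curvature lower bound is secured by a bootstrap: since $BK(g_0^T)\geq 0>-r^{-2}$ at $t=0$, this bound persists by continuity on a short interval, on which the estimate Lemma yields a genuine bound on $|Rm^T|$; once the curvature is bounded, the localized form of the preservation result of part (II) of Theorem \ref{T1A} applies to force $BK\geq 0$, closing the loop. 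This produces constants $\overline S(n,v_0)>0$ and $C_0(n,v_0)>0$, \emph{independent of} $k$, with
\[
\|Rm^T\|^2(x,t)\leq\frac{C_0}{t}\qquad\text{and}\qquad \mathrm{inj}_{g^T_t}(x)\geq (C_0^{-1}t)^{1/2}
\]
on interior balls for $t\in(0,T_0)$, where $T_0=T_0(n,v_0)>0$ is the resulting uniform existence time.

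Finally, armed with these $k$-independent $C^0$-equivalence and curvature estimates, together with the interior derivative bounds $\|\nabla^m Rm^T\|\lesssim t^{-(1+m)/2}$ obtained as in part (III) of Theorem \ref{T1A}, I would pass to a limit. A Hamilton-Cheeger-Gromov compactness argument, run on the transverse geometry and then propagated along the Killing Reeb flow generated by $\xi$, extracts a subsequence converging to a complete solution $g^T(t)$ of the Sasaki-Ricci flow on all of $M\times[0,T_0)$ satisfying $\|Rm^T\|^2\leq C_0/t$. I expect the main obstacle to be precisely the uniformity of the existence time $T_0$ coupled with the propagation of the lower curvature bound in the bootstrap: one must localize the preservation of nonnegative transverse bisectional curvature to the incomplete solutions coming from the key Lemma, and then ensure that the limiting transverse complex structures, the fixed field $\xi$, and the basic/foliated character of the flow all survive the limit so that the limit is a bona fide Sasakian structure rather than merely a Riemannian limit.
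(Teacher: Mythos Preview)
Your proposal is correct and follows essentially the same approach as the paper: the paper's own proof consists of nothing more than the line ``as in the paper of Lee-Tam \cite[Theorem 5.1.]{lt2}'', preceded by the assembly of precisely the ingredients you invoke (the key Lemma via the Sasaki-Chern-Ricci flow on conformally completed pieces, and the Simon-Topping type estimate Lemma). Your sketch fleshes out that reference faithfully, and you have correctly flagged the one genuinely delicate point---localizing the lower bisectional bound on the incomplete local solutions so that the estimate Lemma applies uniformly---which is exactly where the work in \cite{lt2} lies.
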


\end{document}